\newlist{paraenum}{enumerate}{1}
\setlist[paraenum]{wide, label=(\arabic*)}
\patchcmd{\@setaddresses}{\indent}{\noindent}{}{}
\patchcmd{\@setaddresses}{\indent}{\noindent}{}{}
\patchcmd{\@setaddresses}{\indent}{\noindent}{}{}
\patchcmd{\@setaddresses}{\indent}{\noindent}{}{}
\numberwithin{equation}{section}
\theoremstyle{plain}
\newtheorem{theorem}{Theorem}[section]
\newtheorem{corollary}[theorem]{Corollary}
\newtheorem{lemma}[theorem]{Lemma}
\newtheorem{proposition}[theorem]{Proposition}
\theoremstyle{definition}
\newtheorem{remark}[theorem]{Remark}
\newenvironment{example}
  {\pushQED{\qed}\examplex}
  {\popQED\endexamplex}
\newtheorem{definition}[theorem]{Definition}
\newtheorem{convention}[theorem]{Convention}
\newtheorem*{convention*}{Convention}
\newcommand{\rr}{\mathbb{R}}
\newcommand{\nn}{\mathbb{N}}
\newcommand{\cm}{\mathcal{M}}
\newcommand{\cn}{\mathcal{N}}
\newcommand{\ct}{\mathcal{T}}
\newcommand{\cg}{\mathcal{G}}
\newcommand{\cz}{\mathcal{Z}}
\DeclareMathOperator{\Pa}{Pa}
\DeclareMathOperator{\pa}{pa}
\DeclareMathOperator{\An}{An}
\DeclareMathOperator{\De}{De}
\DeclareMathOperator{\ch}{ch}
\DeclareMathOperator{\Ch}{Ch}
\newcommand{\indep}{\mathrel{\text{$\perp\mkern-10mu\perp$}}}
\newcommand{\nrnodes}{n}
\newcommand{\edge}{\mathrel{\text{---}}}
\newcommand{\pd}{\mathrm{PD}}
\newcommand{\ot}{\leftarrow}
\newcommand{\Id}{I}
\newcommand{\co}{\mathsf{c}}
\newcommand{\defas}{\coloneqq}
\newcommand{\asdef}{\eqqcolon}
\DeclareMathOperator{\sgn}{sgn}
\newcommand{\hatg}{\hat{\cg}}
\DeclareMathOperator{\low}{low}
\DeclareMathOperator{\high}{high}
\DeclareMathOperator{\adj}{adj}
\newcommand{\hsig}{\hat{\sigma}}
\newcommand\tp[2][-6]{{#2}^{\mkern#1mu T}} 
\definecolor{benpurple}{RGB}{180, 0, 240}
\patchcmd{\@maketitle}{\@setauthors}{%
\centering%
\begin{minipage}[b]{.7\linewidth}%
\noindent\@setauthors%
\end{minipage}}
{}{}
\author{Tobias Boege}
\address{Department of Mathematics, KTH Royal Institute of Technology, 
Sweden}
\email{post@taboege.de}
\author{Mathias Drton}
\address{TUM School of Computation, Information and Technology, Technical University of Munich, and Munich Center for Machine Learning, Germany}
\email{mathias.drton@tum.de}
\author{Benjamin Hollering}
\address{TUM School of Computation, Information and Technology, Technical University of Munich}
\email{benjamin.hollering@tum.de}
\author{Sarah Lumpp}
\address{TUM School of Computation, Information and Technology, Technical University of Munich}
\email{sarah.lumpp@tum.de}
\author{Pratik Misra}
\address{TUM School of Computation, Information and Technology, Technical University of Munich}
\email{pratik.misra@tum.de}
\author{Daniela Schkoda}
\address{TUM School of Computation, Information and Technology, Technical University of Munich}
\email{daniela.schkoda@tum.de}
\title{Conditional Independence in Stationary Diffusions}
\keywords{Conditional independence, graphical model, Lyapunov equation, Markov process, Ornstein--Uhlenbeck process}
\subjclass{}
\begin{document}

\begin{abstract}
Stationary distributions of multivariate diffusion processes have
recently been proposed as probabilistic models of causal systems in statistics and machine learning. Motivated by these developments, we study stationary multivariate diffusion processes with a sparsely structured drift. Our main result gives a characterization of the conditional independence relations that hold in a stationary distribution. The  result draws on a graphical representation of the drift structure and pertains to conditional independence relations that hold generally as a consequence of the drift's sparsity pattern.
\end{abstract}

\maketitle


\setcounter{section}{0}

\section{Introduction} 


Consider an $\mathbb{R}^\nrnodes$-valued stationary stochastic process $\{\mathbb{X}(t)\}_{t\geq 0}$ that arises as the solution 
of a stochastic differential equation (SDE)
\begin{align}\label{def:stochastic differential equation}
\:\mathrm{d}\mathbb{X}(t)= f(\mathbb{X}(t))\,\mathrm{d}t + D \:\mathrm{d}\mathbb{W}(t),
\end{align}
 understood in the Itô sense \cite{bhattacharya2023continuous}.
Here,  $f: \mathbb{R}^\nrnodes \to \mathbb{R}^\nrnodes$ is a Lipschitz-continuous map that specifies the drift, $D\in \mathbb{R}^{\nrnodes \times \nrnodes}$ is a diagonal and invertible matrix, and $\mathbb{W}$ is a standard $\nrnodes$-dimensional Brownian motion.  Assuming its unique existence, the stationary (marginal) distribution of the process $\mathbb{X}$ depends on $D$ only through the \emph{diffusion matrix} $C =  DD^T$, and we denote the distribution by $P_{f,C}$.  Our interest lies in examining the conditional independence (CI) relations that hold in $P_{f,C}$ when the drift is sparse, i.e., when each component function $f_i$ only depends on some of its input coordinates.
Let $X$ be an $\mathbb{R}^\nrnodes$-valued random vector  distributed according to $P_{f,C}$.  Let $I,J,K\subseteq[\nrnodes]:=\{1,\dots,\nrnodes\}$ be disjoint index sets, and denote subvectors of $X$ as $X_I=(X_i)_{i\in I}$.  Then we ask which sparsity restrictions on $f$ imply that $X_I$ and $X_J$ are conditionally independent given $X_K$; in symbols, $X_I\indep X_J\mid X_K$ or briefer $I\indep J\mid K$.  The main results of this paper answer this question via a characterization based on a directed graph that encodes the sparsity assumptions about the map $f$.


Before proceeding, we firm up the framework.
If $f$ is Lipschitz-continuous, the SDE in \eqref{def:stochastic differential equation}, initialized at a random vector $\mathbb{X}(0)$ that is independent of the driving Brownian motion, has a unique but not necessarily stationary solution.
Existence and uniqueness of a stationary solution may be guaranteed in the framework provided by \cite{Huang2015FokkerPlanck}.  To this end, we assume that $f$ has component functions $f_i$ that are locally $m$-integrable for $m>\nrnodes$, and that $f$ satisfies a Lyapunov condition. Referring to the entries of the diagonal diffusion matrix $C =  (c_{ij})$, define the  adjoint Fokker--Planck operator 
\begin{align*}
    \mathcal{L} = \sum_{i=1}^\nrnodes \frac{1}{2}c_{ii}\partial_i^2 + f_i \partial_i.
\end{align*}
 The Lyapunov condition then requires existence of a non-negative function $U \in C^2(\mathbb{R}^\nrnodes)$, the Lyapunov function, fulfilling
 \begin{align}
 \label{eq:lyapunov}
     \lim_{\|x\| \to \infty} U(x) = \infty, \text{ and } \limsup_{\|x\| \to \infty} \mathcal{L}U(x) < 0.
 \end{align}

 
In the sequel, we use graphs to represent the sparsity structure of a drift map $f$.
Specifically, we consider directed graphs $\cg = (V, E)$ whose vertex set $V= [\nrnodes]$ indexes the coordinate processes of $\mathbb{X}$.
For all graphs considered in the paper, we  \emph{tacitly assume that the edge set includes all self-loops $i\to i$}.
Let $\Pa_\cg(i) = \{j \in[\nrnodes]: (j,i) \in E\}$ denote the set of parents of node $i\in V$ in graph $\cg$.  By our convention on self-loops, $i\in\Pa_\cg(i)$.

\begin{definition}\label{def:graphical_dynamical_model}
We call a drift map $f:\mathbb{R}^\nrnodes\to\mathbb{R}^\nrnodes$ \emph{$\cg$-compatible}  if it is Lipschitz-continuous, satisfies the above local integrability and Lyapunov conditions, and has its sparsity pattern given by $\cg$, that is, each coordinate function $f_i:\mathbb{R}^\nrnodes\to\mathbb{R}$ only depends the values $x_{\Pa_\cg(i)}$ of input coordinates indexed by $\Pa_\cg(i)$, $i\in[\nrnodes]$.
\end{definition}


We now introduce our main theorem.  If there is a directed path from node $i$ to node $j$ in the directed graph $\cg$, then $i$ is an \emph{ancestor} of $j$.  Write $\An_\cg(i)$ for the set of all ancestors of $i$.  Note that $i\in\An_\cg(i)$.  
Given $\cg$, we construct an auxiliary graph $\hatg=(V,\hat{E})$ that we call the \emph{trek graph}.  It is obtained by including a \emph{bidirected edge} $i\leftrightarrow j$ in $\hat{E}$ if $\An_\cg(i)\cap\An_\cg(j)\not=\emptyset$. 

\begin{theorem}
    \label{thm:main}
    Let $\cg=(V,E)$ be a directed graph on $V=[\nrnodes]$, and let $\hatg$ be its trek graph.  Let $I,J,K\subseteq[\nrnodes]$ be three disjoint index sets.  Then the following two statements are equivalent: 
    \begin{enumerate}
        \item[(i)] The set $I$ is separated from $J$ by $V \setminus (I \cup J \cup K)$ in the trek graph $\hatg$.
        \smallskip
        \item[(ii)] The conditional independence $X_I \indep X_J \mid X_K$ holds in any random vector $X$ that is distributed according to a stationary distribution $P_{f,C}$ given by a $\cg$-compatible drift map $f:\mathbb{R}^\nrnodes\to\mathbb{R}^\nrnodes$ and a positive definite diagonal matrix $C\in\mathbb{R}^{\nrnodes\times\nrnodes}$. 
    \end{enumerate}
\end{theorem}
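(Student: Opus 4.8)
The plan is to prove the two implications separately, after first recasting condition~(i) in a combinatorial form adapted to the bidirected structure of $\hatg$. Since every edge of $\hatg$ is bidirected, ``$I$ is separated from $J$ by $V\setminus(I\cup J\cup K)$'' is just ordinary vertex separation, which by a shortest-path argument is equivalent to saying that the induced subgraph $\hatg[I\cup J\cup K]$ contains no path between $I$ and $J$. Letting $A$ be the union of the connected components of $\hatg[I\cup J\cup K]$ that meet $I$ and $B=(I\cup J\cup K)\setminus A$, condition~(i) thus means $I\subseteq A$, $J\subseteq B$, and no edge of $\hatg$ runs between $A$ and $B$. Writing $K_A=K\cap A$ and $K_B=K\cap B$, the absence of bidirected edges between $A$ and $B$ translates, by the definition of the trek graph, into $\An_\cg(A)\cap\An_\cg(B)=\emptyset$.

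For the direction (i)$\Rightarrow$(ii) I would exploit that ancestrally closed sets carry \emph{autonomous} subprocesses. If $W\subseteq V$ satisfies $\An_\cg(W)=W$, then since each $f_i$ with $i\in W$ depends only on $x_{\Pa_\cg(i)}\subseteq x_W$, the coordinates $(\mathbb{X}_i)_{i\in W}$ solve a closed SDE driven solely by the Brownian motions $(\mathbb{W}_i)_{i\in W}$, so the stationary marginal on $W$ is the stationary law of that subprocess. Applying this to the disjoint ancestral sets $\An_\cg(A)$ and $\An_\cg(B)$ from the previous paragraph, the two subprocesses share no coordinates and are driven by independent families of Brownian motions; the product of their stationary laws is then a stationary law of the decoupled combined system, so by uniqueness it \emph{is} the stationary law, giving $X_{\An_\cg(A)}\indep X_{\An_\cg(B)}$ and in particular $X_{I\cup K_A}\indep X_{J\cup K_B}$. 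It remains to invoke the elementary fact that $(X_I,X_{K_A})\indep(X_J,X_{K_B})$ implies $X_I\indep X_J\mid (X_{K_A},X_{K_B})=X_K$. Crucially, this argument uses only the sparsity pattern and the independence of the driving noise, so it applies verbatim to nonlinear $\cg$-compatible drifts.

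For the converse (ii)$\Rightarrow$(i) I would argue by contraposition: assuming (i) fails, I must produce a single $\cg$-compatible pair $(f,C)$ whose stationary distribution violates $X_I\indep X_J\mid X_K$. The natural choice is a linear (Ornstein--Uhlenbeck) drift $f(x)=Mx$ with $M$ stable and supported on $\cg$ and $C$ diagonal positive definite, for which the stationary law is Gaussian with covariance $\Sigma=\int_0^\infty e^{tM}Ce^{tM^{T}}\,dt$, the unique solution of $M\Sigma+\Sigma M^{T}+C=0$. Expanding the matrix exponential exhibits $\Sigma$ as a trek-rule sum whose $(a,b)$ entry collects contributions from pairs of directed walks issuing from a common ancestor, so generically $\Sigma_{ab}\neq 0$ exactly when $a\leftrightarrow b$ in $\hatg$. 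For a Gaussian, $X_I\indep X_J\mid X_K$ fails iff some entry of the conditional cross-covariance is nonzero, equivalently $\det\Sigma_{\{i\}\cup K,\,\{j\}\cup K}\neq 0$ for suitable $i\in I$, $j\in J$; taking $i,j$ to be the endpoints of a path witnessing the failure of~(i), the task reduces to showing that this almost-principal minor is not the zero polynomial in the entries of $(M,C)$.

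The main obstacle is precisely this nonvanishing statement. I expect to establish it by a trek-separation argument in the spirit of Draisma--Sullivant--Talaska: the witnessing path $i\leftrightarrow v_1\leftrightarrow\cdots\leftrightarrow v_m\leftrightarrow j$ with $v_1,\dots,v_m\in K$ should be spliced together, edge by edge, into a system of vertex-disjoint treks connecting $\{i\}\cup K$ to $\{j\}\cup K$, certifying that $\Sigma_{\{i\}\cup K,\{j\}\cup K}$ has full rank and hence nonzero determinant for generic parameters. The two delicate points are that the Lyapunov solution carries trek weights given by integrals rather than by simple edge-monomials, so the usual trek-rule combinatorics must be adapted to rule out cancellation of the leading term, and that conditioning on the off-path vertices of $K$ must not force a spurious vanishing; both I would control by a genericity or degeneration argument choosing the free parameters of $M$ and $C$ so that a single trek system dominates the minor.
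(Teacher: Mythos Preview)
Your (i)$\Rightarrow$(ii) argument is correct and close to the paper's approach: the paper also reduces to marginal independence of $X_{\An(A)}$ and $X_{\An(B)}$ for disjoint ancestral sets, though it makes the ``autonomous subprocess'' step rigorous via the Fokker--Planck equation (first showing that the marginal density on an ancestral set solves the restricted equation, then that the product density solves the equation for the union) rather than arguing directly with the driving Brownian motions. Your direct decomposition into $A,B$ is slightly more economical than the paper's detour through the connected-set Markov property for bidirected graphs, but the substance is the same.

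The (ii)$\Rightarrow$(i) direction has a genuine gap. The Draisma--Sullivant--Talaska trek-separation machinery rests on a Lindstr\"om--Gessel--Viennot cancellation that works because the SEM trek rule produces pure edge-monomials; the Lyapunov trek rule instead carries binomial weights $\binom{\ell+r}{\ell}$ depending on the trek shape, and no analogous cancellation lemma is available, so your ``spliced system of vertex-disjoint treks'' does not by itself certify that the almost-principal minor is nonzero, and the ``genericity or degeneration'' placeholder does not indicate how cancellation is to be ruled out. The paper's route is quite different and considerably more hands-on: it first reduces, via subgraph specialization, to the case where $\cg$ is a \emph{zig-zag} graph, an iterated $1$-clique sum of treks extracted from the witnessing $\hatg$-path; on a single trek it uses a one-parameter restricted trek rule (all edge weights set to $1$, common diagonal $-1/(2\zeta)$) and computes the lowest- or highest-degree term of the relevant minor in $\zeta$ explicitly via an inductive Schur-complement analysis; it handles conditioning sets that omit interior nodes by a \emph{perfect-correlation limit} (sending one off-diagonal drift entry to infinity so that a node becomes an exact copy of its parent, effectively contracting the trek while staying in the model closure); and it passes from a single trek to a zig-zag by a block-triangular factorization of the minor combined with induction on the number of constituent treks. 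None of these ingredients---the degree-term computation, the perfect-correlation trick, or the zig-zag induction---is anticipated by your sketch, and together they are the actual technical content of this direction.
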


\begin{figure} 
\newcommand{\scalefactor}{0.9} 
\centering
\begin{subfigure}[t]{0.43\textwidth}
    \centering
    \scalebox{\scalefactor}{%
    \begin{tikzpicture}[minimum size = 1cm, every loop/.style={looseness = 7}]
    \node [draw, circle] (1) {\strut$1$};
    \node [right = 1.5 of 1, draw, circle] (2) {\strut$2$};
    \node [below = 1.5 of 2, draw, circle] (3) {\strut$3$};
    \node [left = 1.5 of 3, draw, circle] (4) {\strut$4$};
    \node [below = 0.5cm of $(4.south)!0.5!(3.south)$, scale = 1/\scalefactor] (G) {\strut$\cg$};
    
    \draw [->] (1) -- (2);
    \draw [->] (2) -- (3);
    \draw [->] (4) -- (3);
    \draw[->] (1) edge[out=205,in=155, loop]node {} (1); 
    \draw[->] (2) edge[out=25,in=335, loop]node {} (2); 
    \draw[->] (3) edge[out=25,in=335, loop]node {} (3); 
    \draw[->] (4) edge[out=205,in=155, loop]node {} (4); 
    \end{tikzpicture}}
    \caption{Directed graph on four nodes including all self-loops.}
    \label{fig:example_intro:a}
\end{subfigure}%
\hspace{0.5cm}
\begin{subfigure}[t]{0.43\textwidth}
    \centering
    \scalebox{\scalefactor}{%
    \begin{tikzpicture}[minimum size = 1cm, every loop/.style={looseness = 7}]
    \node [draw, circle] (1) {\strut$1$};
    \node [right = 1.5 of 1, draw, circle] (2) {\strut$2$};
    \node [below = 1.5 of 2, draw, circle] (3) {\strut$3$};
    \node [left = 1.5 of 3, draw, circle] (4) {\strut$4$};
    \node [below = 0.5cm of $(4.south)!0.5!(3.south)$, scale = 1/\scalefactor] (Ghat) {\strut$\hatg$};
    
    \draw [<->] (1) -- (2);
    \draw [<->] (2) -- (3);
    \draw [<->] (1) -- (3);
    \draw [<->] (3) -- (4);
    \draw[<->] (1) edge[out=205,in=155, loop]node {} (1); 
    \draw[<->] (2) edge[out=25,in=335, loop]node {} (2); 
    \draw[<->] (3) edge[out=25,in=335, loop]node {} (3); 
    \draw[<->] (4) edge[out=205,in=155, loop]node {} (4);
    \end{tikzpicture}}
    \caption{Corresponding bidirected trek graph.}
    \label{fig:example_intro:b}
\end{subfigure}
\caption{Directed graph $\cg$ and its trek graph $\hatg$.}
\label{fig:example_intro}
\end{figure}

\begin{example}
\label{ex:trekgraph}
   Consider the graph $\cg$ depicted in \Cref{fig:example_intro:a}, and let $X$ be a random vector following a stationary distribution  given by a $\cg$-compatible drift map and a positive definite diagonal diffusion matrix. The graph $\cg$ has the ancestor sets,
   \begin{alignat*}{4}
   \An_\cg(1)&=\{1\}, &\quad\An_\cg(2)&=\{1,2\}, & \quad\An_\cg(3)&=\{1,2,3,4\}, &\quad\An_\cg(4)&=\{4\}.
   \end{alignat*}
   As only the pairs of nodes $(1,4)$ and $(2,4)$ do not have a common ancestor, the trek graph $\hat{\cg}$ is missing precisely the edges $1 \leftrightarrow 4$ and $2 \leftrightarrow 4$; see \Cref{fig:example_intro:b}.  For subsets of nodes $I$, $J$, and $S$, write $I\perp_{\hatg} J\mid S$ if $I$ and $J$ are separated by $S$ in $\hatg$ (i.e., every path from a node $i\in I$ to a node $j\in J$ intersects $S$).  Then \Cref{thm:main} implies the following marginal and conditional independencies:
   \begin{alignat*}{5}
       \{1,2\}&\perp_{\hatg}\{4\}\mid \{3\} &\quad\implies&&\quad X_{\{1,2\}} &\indep X_4,\\
       \{1\}&\perp_{\hatg}\{4\}\mid \{2,3\} &\quad\implies&&\quad X_1 &\indep X_4,\\
       \{2\}&\perp_{\hatg}\{4\}\mid \{1,3\} &\quad\implies&&\quad X_2 &\indep X_4,\\
       \{1\}&\perp_{\hatg}\{4\}\mid \{3\} &\quad\implies&&\quad X_1 &\indep X_4\mid X_2,\\
       \{2\}&\perp_{\hatg}\{4\}\mid \{3\} &\quad\implies&&\quad X_2 &\indep X_4\mid X_1.
   \end{alignat*}
   Here, the first listed separation implies the other separations graphically, and the first listed marginal independence implies the other independencies probabilistically by the semi-graphoid axioms of conditional independence \cite[\S2.2]{Studeny}. 

    \Cref{thm:main} further states that no marginal or conditional independencies  other than the ones listed above hold for \emph{all} considered stationary distributions. For instance, \Cref{thm:main} implies existence of a $\cg$-compatible drift map $f$ and a positive definite diagonal diffusion matrix $C$ such that 
    \[
    X_1 \not\indep X_4 \mid X_3
    \]
    for $X \sim P_{f,C}$. We demonstrate how to construct such distributions in \Cref{sec:LyapunovCI}.
\end{example}

The proof of \Cref{thm:main} will be developed in the subsequent sections.  The fact that separation in the trek graph implies conditional independence will be derived with the help of the Fokker--Planck equation that characterizes the density of $P_{f,C}$ (\Cref{sec:MarginalIndep}). In constructing counterexamples that exhibit conditional dependence when separation fails, we strive hard to structure them as simple as possible.  Specifically, our counterexamples take $f$ to be linear, which gives an Ornstein--Uhlenbeck process for which we let the diffusion matrix be a multiple of the identity (\Cref{sec:LyapunovCI}). 

\begin{remark}[Applied motivation]
Stationary diffusion processes have emerged as a new framework for statistical analysis of causal systems.  Indeed, in many applications it is difficult if not impossible to observe a system more than once; e.g., in biology, measurement may destroy the observed collection of cells. 
The available data then constitute a sample of independent observations, for which the new paradigm retains an implicit temporal perspective by considering one-time snapshots of independent stationary dynamic processes. This approach was pioneered using stationary Ornstein--Uhlenbeck processes, resulting in Gaussian models termed \emph{Lyapunov models} \cite{fitch2019learning, varando2020graphical}.  Research progresses on issues like identifiability of drift parameters, statistical estimation, and new applications \cite{dettling2022identifiability, dettling2022lasso,wang2023dictys,rohbeck2024bicycle}.  Moreover, extensions to non-linear diffusions were proposed in \cite{lorch2024causal}.  \Cref{thm:main} advances this line of research via a characterization of conditional independence  that can be used to disprove parameter identifiability \cite{dettling2022identifiability}. It also reveals that even for general drift maps, the trek graph $\hatg$ may be estimated by  learning a marginal independence graph \cite{marginalIndependence2023,varando2020graphical}.
\end{remark}

\begin{remark}[Relation to structural equation models]
Working with stationary distributions of sparse multivariate diffusion processes has particular appeal when modeling causal systems with feedback loops, which present mathematical and interpretational challenges for the classical structural equation models (SEMs) \cite{Peters2017, drton2018algebraic}. For the latter, the well-known d-separation criterion yields a graphical characterization of conditional independence relations \cite{Handbook}. In~our \Cref{thm:main}, a marginal independence $X_I\indep X_J$ (with conditioning set $K=\emptyset$) arises from the absence of common ancestors of nodes in $I$ and nodes in $J$, which is precisely the same phenomenon as in d-separation. However, as will be evident from our later proof, \Cref{thm:main} shows that for stationary diffusions the marginal independencies are the only source of independence relations, which radically differs from the case of d-separation.

\end{remark}




The remainder of the paper is organized as follows.
\Cref{sec:MarginalIndep} treats the characterization of marginal independence and describes how to infer the conditional independence relations implied by these marginal independencies by means of a connected set Markov property for the trek graph.
In \Cref{sec:LyapBackground}, we set up notation, formally introduce the Lyapunov model and review some facts on Gaussian distributions. 
In \Cref{sec:LyapunovCI}, we prove the main result by showing that all conditional independence relations that hold in the Lyapunov model originate from marginal independencies.  To this end, we reduce the problem to graphs obtained from 1-clique sums of treks. We then use a combination of algebraic and analytic methods to establish the result in this simpler case.

\section{Marginal Independence and Implied Conditional Independencies}
\label{sec:MarginalIndep}

This section proves one direction in \Cref{thm:main}, namely, that $V \setminus (I \cup J \cup K)$ separating $I$ and $J$ in the trek graph $\hatg$ implies the conditional independence $X_I \indep X_J \mid X_K$.  We start with the case $K=\emptyset$, i.e., marginal independence statements. Then, we show that these statements imply all claimed conditional independence statements. Before giving the proofs, we introduce graphical concepts, as used in this as well as later sections of the paper.

\subsection{Graphs} 

For a directed graph $\cg = (V, E)$ with edge set $E \subseteq V \times V$, we write $i \to j$ if $(i,j) \in E$.  We denote the set of nodes or edges of a specific graph $\cg$ by $V(\cg)$ or $E(\cg)$, respectively.  The set $\Pa_\cg(i) = \{j \in V: j \to i\}$ collects the parents of a node $i\in V$, and $\Ch_\cg(i) = \{j \in V: i \to j\}$ is its set of children. 
A~directed path is a sequence of 
nodes $(i_1, \dots, i_m)$ such that $i_l \to i_{l+1} \in E$ for $1\le l<m$, and $m-1$ is the length of this path. A~path of length~$0$ consists of a single node.
As defined in the introduction, $\An_\cg(i)$ is the set of ancestors of $i$, i.e., the nodes $j$ from which there is a directed path to~$i$.
This~definition is conjunctively applied to sets, so $\An_\cg(I) = \bigcup_{i \in I}\An_\cg(i)$. The set of descendants of a node~$i$ is denoted $\De_\cg(i)$ and consists of all nodes $j$ such that there is a directed path from $i$ to~$j$. Typically, it is clear which graph $\cg$ is concerned, and we  drop the subscript, writing $\An(i)\equiv \An_\cg(i)$ and, similarly,  $\Pa(i)$ and $\Ch(i)$. The respective sets where the node itself is excluded are denoted in lowercase, e.g., $\pa(i)=\Pa(i)\setminus\{i\}$ and $\ch(i)=\Ch(i)\setminus\{i\}$.

Combining two directed paths yields a trek: An $(\ell,r)$-trek between nodes $i_\ell$ and $j_r$ is a sequence of 
nodes $(i_\ell, \dots, i_1, t, j_1, \dots, j_r)$ such that both $(t, i_1, \dots, i_\ell)$ and $(t, j_1, \dots, j_r)$ are directed paths.  Node $t$ is the top node of the trek.  If there is a trek between two nodes $i$ and $j$, then the top node is a common element of $\An(i)$ and $\An(j)$.  Thus, the trek graph $\hatg$ from \Cref{thm:main} has two nodes adjacent precisely if there is a trek between them.

A subset of nodes $I \subseteq V$ is ancestral if $\An(I) = I$. A trivial example of an ancestral set is a source node, meaning a node without parents.  
If a directed path starts and ends at the same node, then it forms a directed cycle. If the graph $\cg$ does not contain any directed cycles, then it is termed a directed acyclic graph (DAG).  The vertex set of a DAG may be ordered topologically as $(i_1, \dots, i_n)$ such that for $l \neq k$, $i_l\in\An(i_k)$ implies that $l<k$.



\subsection{Marginal Independence}

The first result treats the implication $(i)\Rightarrow(ii)$ in \Cref{thm:main} when the conditioning set $K$ is empty, in which case the independence is marginal.

\begin{theorem} \label{thm:MarginalIndependence}
Let $\mathcal{G}=(V,E)$ be a directed graph on $V=[\nrnodes]$. If $f$ is $\mathcal{G}$-compatible, then $X \sim P_{f, C}$ satisfies $X_I \indep X_J$ for all sets of nodes $I, J \subseteq V$ without trek between them.
\end{theorem}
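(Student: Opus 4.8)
The plan is to work at the level of the stationary density and to exploit that the drift restricted to an ancestral set is autonomous. Throughout, let $p$ denote the density of $P_{f,C}$; it exists and is smooth because the diffusion is non-degenerate ($C$ positive definite), and it satisfies the stationary Fokker--Planck equation
\[
\sum_{i=1}^\nrnodes \left[\tfrac{1}{2}c_{ii}\,\partial_i^2 p - \partial_i(f_i\,p)\right] = 0.
\]
Set $A = \An_\cg(I)$ and $B = \An_\cg(J)$. Since a trek between two nodes exists exactly when they share a common ancestor, the hypothesis that there is no trek between $I$ and $J$ says precisely that $\An_\cg(i)\cap\An_\cg(j)=\emptyset$ for all $i\in I$, $j\in J$, i.e.\ $A\cap B=\emptyset$. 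As $X_I$ is a function of $X_A$ and $X_J$ is a function of $X_B$, it suffices to prove the stronger statement $X_A\indep X_B$.

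The first key step is a marginalization lemma: for any ancestral set $S$ (so $\An_\cg(S)=S$), the marginal density $p_S=\int p\,\mathrm{d}x_{V\setminus S}$ solves the Fokker--Planck equation of the subsystem indexed by $S$, namely $\sum_{i\in S}[\tfrac12 c_{ii}\partial_i^2 p_S - \partial_i(f_i p_S)]=0$. I would prove this by integrating the full equation over $x_{V\setminus S}$: the terms indexed by $i\in V\setminus S$ are total $x_i$-derivatives and integrate to zero provided the probability flux decays at infinity, while for $i\in S$ the operator $\partial_i$ commutes with the integral and, crucially, $f_i$ depends only on $x_S$ because $S$ is ancestral, so it factors out of the integral over $x_{V\setminus S}$. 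Applying this to $S=A$, $S=B$, and $S=A\cup B$ (each ancestral, the last because a union of ancestral sets is ancestral) shows that $p_A$, $p_B$, and $p_{A\cup B}$ solve the respective subsystem Fokker--Planck equations.

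The second step is a direct computation showing that the product $p_A(x_A)\,p_B(x_B)$ also solves the $(A\cup B)$-subsystem equation. Because $A$ and $B$ are disjoint and $f_i$ depends only on $x_A$ for $i\in A$ and only on $x_B$ for $i\in B$, the subsystem operator splits cleanly: the terms with $i\in A$ yield $p_B(x_B)$ times the $A$-subsystem operator applied to $p_A$, which vanishes by the marginalization lemma, and symmetrically for $i\in B$. Hence both $p_{A\cup B}$ and $p_A\,p_B$ are stationary densities for the $(A\cup B)$-subsystem.

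It remains to conclude $p_{A\cup B}=p_A\,p_B$, which yields $X_A\indep X_B$ and therefore $X_I\indep X_J$; this is where I expect the main work. I would invoke uniqueness of the stationary distribution for the $(A\cup B)$-subsystem. A clean route that avoids re-verifying a Lyapunov condition on the subsystem is to observe that the subsystem is itself a non-degenerate diffusion (its diffusion matrix is the principal submatrix $C_{A\cup B}$, still positive definite), that a stationary distribution exists (the marginal $p_{A\cup B}$ is one), and that a non-degenerate diffusion admitting a stationary distribution is positive Harris recurrent with a \emph{unique} stationary law; the two stationary densities must then coincide. The secondary technical point to pin down is the decay and integrability needed to discard the boundary terms in the marginalization lemma, which I would extract from the regularity of $p$ guaranteed by the Fokker--Planck framework.
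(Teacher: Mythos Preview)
Your strategy---restrict to the ancestral sets $A=\An_\cg(I)$ and $B=\An_\cg(J)$, show that marginals solve the subsystem Fokker--Planck equations, show the product $p_A p_B$ does too, then invoke uniqueness---is exactly the paper's. The one substantive difference is that you work with the \emph{strong} (pointwise) Fokker--Planck equation and integrate it over $x_{V\setminus S}$, whereas the paper uses the \emph{weak} form $\int \mathcal{L}\phi \cdot p\,\mathrm{d}x = 0$ for test functions $\phi\in C_0^\infty$. By choosing $\phi$ depending only on $x_S$, the derivatives $\partial_i\phi$ for $i\notin S$ vanish identically, and Fubini then gives the marginalization lemma with no boundary-term analysis whatsoever. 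Your route requires the probability flux to decay at infinity, which you flag but do not prove; under the stated hypotheses (Lipschitz drift, local integrability, Lyapunov condition) this is not automatic. Likewise, the weak formulation plugs directly into \Cref{lem:Fokker-Planck} (the Huang framework) for uniqueness of the stationary density, whereas your Harris-recurrence alternative is legitimate but heavier and needs its own verification for the subsystem. In short: your plan is correct and matches the paper, but switching to the weak formulation cleanly eliminates both of the technical issues you yourself identified.
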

This theorem generalizes an observation concerning Ornstein--Uhlenbeck processes made in \cite{varando2020graphical}, and we prove it using the Fokker--Planck equation.  To this end, we draw on the next lemma, which follows from combining Theorem A and Proposition 2.1 in \cite{Huang2015FokkerPlanck}.  Here, $C_0^\infty(\mathbb{R}^{\nrnodes})$ is the set of smooth functions with compact support.

\begin{lemma}\label{lem:Fokker-Planck} If the Lyapunov condition is fulfilled, then \eqref{def:stochastic differential equation} has a unique stationary solution admitting a density $p$. This density is the unique density fulfilling the Fokker--Planck equation
  \begin{align*}
    \int_{\mathbb{R}^{\nrnodes}}{}\mathcal{L}\phi(x)p(x)\:\mathrm{d}x = 0 \text{ for all } \phi \in C_0^\infty(\mathbb{R}^{\nrnodes}) .
    \end{align*} 
\end{lemma}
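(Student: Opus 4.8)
The plan is to derive the lemma by verifying the hypotheses of, and then combining, the two cited results from \cite{Huang2015FokkerPlanck}. First I would confirm that our standing assumptions place us inside their framework: the drift $f$ is Lipschitz-continuous with component functions locally $m$-integrable for some $m>\nrnodes$, the diffusion matrix $C$ is diagonal and positive definite (so the constant diagonal entries $c_{ii}>0$ render $\mathcal{L}$ uniformly elliptic on all of $\mathbb{R}^\nrnodes$), and the Lyapunov condition \eqref{eq:lyapunov} holds. Under exactly these conditions, Theorem~A of \cite{Huang2015FokkerPlanck} yields a stationary probability density $p$, i.e.\ a nonnegative $L^1$ function of unit mass solving the stationary Fokker--Planck equation in the weak sense $\int_{\mathbb{R}^\nrnodes}\mathcal{L}\phi\,p\,\mathrm{d}x=0$ for all $\phi\in C_0^\infty(\mathbb{R}^\nrnodes)$. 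The role of the Lyapunov condition is precisely to prevent mass from escaping to infinity, thereby guaranteeing that the stationary object is a genuine probability density rather than merely an invariant measure of infinite total mass.

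Second, I would supply the probabilistic content, namely that this density corresponds to a \emph{stationary solution} of the SDE \eqref{def:stochastic differential equation}. Since $f$ is Lipschitz, the SDE has a unique strong solution for every initial law, and the associated transition semigroup is Feller. A probability density $p$ solving the stationary Fokker--Planck equation is the density of an invariant measure $\mu$ for this semigroup; initializing $\mathbb{X}(0)\sim\mu$ independently of the driving Brownian motion then produces a strictly stationary solution whose one-dimensional marginal has density $p$. This is the standard correspondence between invariant measures of a nondegenerate diffusion and solutions of the stationary Fokker--Planck equation, and the nondegeneracy of $C$ is what legitimizes the density interpretation.

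Third, I would address the two uniqueness assertions separately. Uniqueness of the stationary solution reduces to uniqueness of the invariant probability measure, and for this I would invoke Proposition~2.1 of \cite{Huang2015FokkerPlanck}, which under the same elliptic and Lyapunov hypotheses identifies $p$ as the \emph{unique} probability density solving the weak equation. Uniform ellipticity of $\mathcal{L}$ furnishes irreducibility of the diffusion, so the invariant probability measure is unique, and hence the stationary process is unique up to the choice of driving Brownian motion.

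The main obstacle is bookkeeping rather than conceptual: one must check that our hypotheses match the exact regularity assumptions imposed in \cite{Huang2015FokkerPlanck}, in particular that local $m$-integrability with $m>\nrnodes$ is the integrability threshold underpinning their weak-solution theory, and that their formulation of the stationary equation (stated via $\mathcal{L}$ or its formal adjoint) agrees with the integrated form written in the lemma. A secondary care point is to keep the two senses of uniqueness cleanly separated, tracing uniqueness of the stochastic process to the uniqueness of the invariant measure and uniqueness of the PDE solution directly to Proposition~2.1, so that neither is conflated with the other.
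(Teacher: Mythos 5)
Your proposal follows essentially the same route as the paper, which gives no independent argument for this lemma and simply states that it follows by combining Theorem~A and Proposition~2.1 of \cite{Huang2015FokkerPlanck}. Your additional bookkeeping---verifying the ellipticity and local integrability hypotheses, spelling out the standard correspondence between weak solutions of the stationary Fokker--Planck equation and invariant laws of the SDE, and separating the two uniqueness claims---is exactly the content the paper leaves implicit in that citation.
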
 


\begin{proof}[Proof of \Cref{thm:MarginalIndependence}] 
By \Cref{lem:Fokker-Planck}, the stationary distribution $P_{f, C}$ has a density $p$.
We begin by showing that for any ancestral set $A\subseteq V$, the marginal density $p_A$ of $X_A$ satisfies the Fokker--Planck equation for the SDE restricted to $A$. Let $A^\co = V \setminus A$ be the set of all remaining nodes. Fix a function $\phi \in C_0^\infty(\mathbb{R}^{\nrnodes})$ with $\phi(x)$ only depending on $x_A$. Then the Fokker--Planck equation for the overall density $p$ yields that
\begin{multline*}
 0 =  \int_{\mathbb{R}^{\nrnodes}}{}\mathcal{L}\phi(x)p(x)\:\mathrm{d}x 
 =\int_{\mathbb{R}^{\nrnodes}}{}\left( \sum_{i=1}^\nrnodes \frac{1}{2}c_{ii}\partial_i^2 \phi(x) + f_i(x) \partial_i \phi(x) \right) p(x)\:\mathrm{d}x \\
 = \int_{\mathbb{R}^{\nrnodes}}{}\left( \sum_{i\in A} \frac{1}{2}c_{ii}\partial_i^2 \phi(x) + f_i(x) \partial_i \phi(x) \right) p(x)\:\mathrm{d}x 
    \end{multline*} 
because $\partial_i \phi(x) \equiv 0$ for $i \notin A$ as $\phi$ only depends on $x_A$. Now, the ancestrality of $A$ implies that $f_i$ for $i\in A$ only depends on $x_A$.  Rewriting the above integral, we obtain, by Fubini,
\begin{align*}
 0 
  = &\int_{\mathbb{R}^A}{}\int_{\mathbb{R}^{A^{\co}}}{}\left( \sum_{i\in A} \frac{1}{2}c_{ii}\partial_i^2 \phi(x) + f_i(x) \partial_i \phi(x) \right) p(x)\:\mathrm{d}x_{A^{\co}} \:\mathrm{d}x_A \\
  = &\int_{\mathbb{R}^A}{}\left( \sum_{i\in A} \frac{1}{2}c_{ii}\partial_i^2 \phi(x) + f_i(x) \partial_i \phi(x) \right) \int\limits_{\mathbb{R}^{A^{\co}}}{}p(x)\:\mathrm{d}x_{A^{\co}} \:\mathrm{d}x_A \\
  = &\int_{\mathbb{R}^A}{}\left( \sum_{i\in A} \frac{1}{2}c_{ii}\partial_i^2 \phi(x_A) + f_i(x_A) \partial_i \phi(x_A) \right) p_A(x_A) \:\mathrm{d}x_A,
    \end{align*} 
  which is the Fokker--Planck equation for a restricted SDE of the form
  \begin{align*}
    \:\mathrm{d}\mathbb{X}_{A}(t) = f_A(\mathbb{X}_{A}(t)) \:\mathrm{d}t + D_{A, A} \:\mathrm{d}\mathbb{W}(t)
    \end{align*}
    with $D_{A,A}D_{A,A}^T=C_{A,A}$.
Applying this result to $A=\An(I) \cup \An(J)$ gives a Fokker--Planck equation for the marginal density of  $X_{\An(I) \cup \An(J)}$. To show the independence statement, we show that the product of marginal densities $p_{\An(I)} p_{\An(J)}$ fulfills the same Fokker--Planck equation. Let $\tilde{I} = \An(I)$, $\tilde{J} = \An(J)$, and $\phi \in C_0^\infty(\mathbb{R}^A)$. Since there is no trek between $I$ and $J$, the sets $\tilde{I}$ and $\tilde{J}$ are disjoint and partition $A=\tilde{I}\cup\tilde{J}$. Therefore,
\begin{equation}
\begin{aligned}
&\int\limits_{\mathbb{R}^A}{}\left( \sum_{i\in A} \frac{1}{2}c_{ii}\partial_i^2 \phi(x_A) + f_i(x_A) \partial_i \phi(x_A) \right)(p_{\tilde{I}} p_{\tilde{J}})(x_A)\:\mathrm{d}x_A \\ =&\int\limits_{\mathbb{R}^A}{}\left( \sum_{i\in \tilde{I}} \frac{1}{2}c_{ii}\partial_i^2 \phi(x_A) + f_i(x_A) \partial_i \phi(x_A) \right)(p_{\tilde{I}} p_{\tilde{J}})(x_A)\:\mathrm{d}x_A \\ &+\int\limits_{\mathbb{R}^A}{}\left( \sum_{i\in \tilde{J}} \frac{1}{2}c_{ii}\partial_i^2 \phi(x_A) + f_i(x_A) \partial_i \phi(x_A) \right)(p_{\tilde{I}} p_{\tilde{J}})(x_A)\:\mathrm{d}x_A. \label{eq:Fokker-Planck union}
\end{aligned}
\end{equation}
We rewrite the first integral as
\begin{align*}
&\int\limits_{\mathbb{R}^A}{}\left( \sum_{i\in \tilde{I}} \frac{1}{2}c_{ii}\partial_i^2 \phi(x_A) + f_i(x_A) \partial_i \phi(x_A) \right)(p_{\tilde{I}} p_{\tilde{J}})(x_A)\:\mathrm{d}x_A \\ =&\int\limits_{\mathbb{R}^{\tilde{J}}}{}\int\limits_{\mathbb{R}^{\tilde{I}}}{}\left( \sum_{i\in \tilde{I}} \frac{1}{2}c_{ii}\partial_i^2 \phi(x_A) + f_i(x_A) \partial_i \phi(x_A) \right)(p_{\tilde{I}} p_{\tilde{J}})(x_A)\:\mathrm{d}x_{\tilde{I}} \:\mathrm{d}x_{\tilde{J}} \\ 
 =&\int\limits_{\mathbb{R}^{\tilde{J}}}{}\int\limits_{\mathbb{R}^{\tilde{I}}}{}\left( \sum_{i\in \tilde{I}} \frac{1}{2}c_{ii}\partial_i^2 \phi(x_A) + f_i(x_A) \partial_i \phi(x_A) \right)p_{\tilde{I}} (x_{\tilde{I}}) \:\mathrm{d}x_{\tilde{I}}  p_{\tilde{J}}(x_{\tilde{J}}) \:\mathrm{d}x_{\tilde{J}}. 
  \end{align*} 
  The Fokker--Planck equation for $\tilde{I}$, with test function $x_{\tilde{I}}\mapsto \phi(x_{\tilde{I}},x_{\tilde{J}})$ for fixed $x_{\tilde{J}}$, yields that the inner integral vanishes. Hence, the overall integral vanishes as well. Reversing the roles of $I$ and $J$ shows that also the second summand in \eqref{eq:Fokker-Planck union} is zero.  In conclusion, $p_{\An(I)} p_{\An(J)}$ fulfills the Fokker--Planck equation for $p_{\An(I)\cup \An(J)}$, i.e., $p_{\An(I)} p_{\An(J)}$ is a density of $X_{\An(I)\cup \An(J)}$.  Consequently, $X_{\An(I)}\indep X_{\An(J)}$ and the decomposition property of conditional independence implies our claim that $X_{I}\indep X_{J}$ because $I\subseteq \An(I)$ and $J\subseteq \An(J)$.
\end{proof}

\subsection{Implied Conditional Independencies}
\label{sec:ImpliedCI}

   

Let $\hatg=(V,\hat{E})$ be the trek graph associated to the given directed graph $\cg$.  By definition, $(i,j)\in\hat{E}$ if and only if there is a trek between nodes $i$ and $j$.  Observe that $(i,j) \in \hat{E}$ if and only if $(j,i) \in \hat{E}$, and, reflecting the structure of treks, we draw the edge bidirected as $i\leftrightarrow j$.  \Cref{thm:MarginalIndependence} clarifies that the trek graph encodes marginal independencies, in particular,
\begin{equation}
\label{eq:pairwiseMP}
i\leftrightarrow j \notin\hat{E} \implies X_i\indep X_j
\end{equation}
in random vectors $X\sim P_{f,C}$ with $f$ a $\cg$-compatible drift map.
A more refined interpretation of the trek graph allows us to derive all conditional independencies that are implied by the marginal statements from \Cref{thm:MarginalIndependence}.   Recall the notation $I\perp_{\hatg} J\mid S$  from \Cref{ex:trekgraph}, which indicates that the subsets of nodes $I$ and $J$ are separated by subset $S$ in $\hatg$.


\begin{theorem}
\label{thm:Section2:CIImplied}
Let $\cg=(V,E)$ be a directed graph on $V=[\nrnodes]$, and consider a random vector $X\sim P_{f,C}$, where $f:\mathbb{R}^\nrnodes\to\mathbb{R}^\nrnodes$ is a $\cg$-compatible drift map and $C\in\mathbb{R}^{\nrnodes\times\nrnodes}$ is positive definite and diagonal.  Let $I,J,K\subseteq[\nrnodes]$ be three disjoint index sets, and let $\hatg$ be the trek graph of $\cg$.   Then 
\[
I\perp_{\hatg} J\mid V \setminus (I \cup J \cup K) \quad\implies\quad
X_I \indep X_J \mid X_K.
\]
%
\end{theorem}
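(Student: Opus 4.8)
The plan is to read off the desired conditional independence from a single marginal independence statement supplied by \Cref{thm:MarginalIndependence}, using only the symmetry and weak union axioms of conditional independence. Since $X_I\indep X_J\mid X_K$ concerns only the variables indexed by $W\defas I\cup J\cup K$, I would phrase everything in terms of the induced subgraph $\hatg[W]$. The first step is a purely combinatorial reformulation of the separation hypothesis: because the separating set is the \emph{complement} $S\defas V\setminus W$ of $W$, the requirement that every path from $I$ to $J$ meets $S$ is the same as the requirement that no path from $I$ to $J$ stays inside $W$. In other words, $I\perp_{\hatg}J\mid S$ holds if and only if $I$ and $J$ lie in distinct connected components of $\hatg[W]$.

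The second step sets up the decomposition. Let $A$ be the union of all connected components of $\hatg[W]$ that contain at least one node of $I$. Then $I\subseteq A$ and, by the separation hypothesis, no component of $\hatg[W]$ meets both $I$ and $J$, so $A\cap J=\emptyset$ and hence $J\subseteq W\setminus A$. Writing $K_A\defas A\cap K$ and $K_B\defas K\setminus K_A$, we obtain the partitions $A=I\cup K_A$ and $W\setminus A=J\cup K_B$. Because $A$ is a union of \emph{entire} connected components of $\hatg[W]$, there is no edge of $\hatg$ joining a node of $A$ to a node of $W\setminus A$ (any such edge would be an edge of $\hatg[W]$ between distinct components); equivalently, by the definition of the trek graph there is no trek in $\cg$ between the sets $A$ and $W\setminus A$.

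The third step invokes \Cref{thm:MarginalIndependence} with the two sets $A$ and $W\setminus A$, which have no trek between them, to conclude the marginal independence $X_{I\cup K_A}\indep X_{J\cup K_B}$. From here the target follows by the semi-graphoid axioms \cite[\S2.2]{Studeny}: applying weak union moves $K_B$ into the conditioning set to give $X_{I\cup K_A}\indep X_J\mid X_{K_B}$; after symmetry, a second application of weak union moves $K_A$ across as well, yielding $X_J\indep X_I\mid X_{K_A\cup K_B}$, i.e.\ $X_J\indep X_I\mid X_K$ since $K=K_A\cup K_B$. A final use of symmetry gives $X_I\indep X_J\mid X_K$.

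I expect the only genuine content to lie in the combinatorial equivalence of the first step and in the verification that the block $A$ has no $\hatg$-edges to its complement within $W$; both are elementary but must be stated carefully, since this is exactly where the separation hypothesis enters (ruling out a component that meets both $I$ and $J$). Everything else is a mechanical application of \Cref{thm:MarginalIndependence} together with the weak union axiom. In particular, I anticipate that no positivity or density assumptions on $P_{f,C}$ are needed, because symmetry and weak union hold for arbitrary probability distributions.
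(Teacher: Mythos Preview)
Your argument is correct. The paper takes a slightly less explicit route: it notes that \Cref{thm:MarginalIndependence} yields the \emph{connected set Markov property} for the bidirected graph $\hatg$ (namely, $X_A\indep X_{V\setminus\text{Sp}_{\hatg}(A)}$ for every nonempty set $A$ connected in $\hatg$) and then invokes the general equivalence, due to \cite{Drton2008MarginalIndependence}, between the connected set Markov property and the global Markov property for bidirected graphs. Your proof is essentially an inline, stripped-down version of the relevant direction of that equivalence: rather than appealing to the full Markov-property machinery, you isolate a single marginal independence $X_A\indep X_{W\setminus A}$ (with $A$ the union of the $I$-components of $\hatg[W]$), obtain it directly from \Cref{thm:MarginalIndependence}, and reach $X_I\indep X_J\mid X_K$ by two applications of weak union. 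What the paper's formulation buys is context---the result is placed within the standard hierarchy of Markov properties for bidirected graphs---whereas your approach buys self-containment and makes transparent that only one carefully chosen marginal independence and the most basic semi-graphoid axioms are required, with no positivity assumption on $P_{f,C}$.
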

\begin{proof}
In the literature on probabilistic graphical models, a random vector $X$ that satisfies \eqref{eq:pairwiseMP} is said to satisfy the pairwise Markov property relative to the bidirected graph $\hatg$ \cite{Drton2008MarginalIndependence}.  However, \Cref{thm:MarginalIndependence} shows that a richer set of marginal independencies hold.  Call a subset $A\subseteq V$ connected in $\hatg$ if any two vertices $i, j \in A$ are connected by a path in $\hatg$ all of whose vertices are in $A$ (i.e., $A$ induces a connected subgraph of $\hatg$).  Let 
$\text{Sp}_{\hatg}(A)$ be the set of all nodes that are in $A$ or are adjacent to an element of $A$ via an edge of $\hatg$.  Then \Cref{thm:MarginalIndependence} implies that $X$ satisfies the connected set Markov property relative to $\hatg$, which requires that 
\[
X_A \indep X_{V \setminus \text{Sp}_{\hatg}(A)} \quad\forall \; A\subset V, \text{ non-empty and connected in } \hatg.
\]
By general properties of conditional independence, the connected set Markov property is equivalent to the global Markov property for bidirected graphs \cite{Drton2008MarginalIndependence}.  The global Markov property states that 
\[
I\perp_{\hatg} J\mid V \setminus (I \cup J \cup K) \quad\implies\quad
X_I \indep X_J \mid X_K,
\]
as was the claim of our theorem.
\end{proof}

We illustrate the theorem in another example.

\begin{figure}
\centering
\begin{tikzpicture}
\node [draw, circle] (4) {\strut$2$};
\node [right = 1 of 4, draw, circle, radius = .15cm] (3) {\strut$3$};
\node [right = 1 of 3, draw, circle] (1) {\strut$4$};
\node [left = 1 of 4, draw, circle] (2) {\strut$1$};

\draw [<->] (3) -- (4);
\draw [<->] (3) -- (1);
\draw [<->] (2) -- (4);
\end{tikzpicture}
\caption{A bidirected graph $\hatg$.}
\label{fig:bidirectedGlobalMarkov}
\end{figure}

\begin{example}
Let $\hatg$ be the bidirected graph from \Cref{fig:bidirectedGlobalMarkov}.  The connected set Markov property requires
\[
1\indep \{3,4\},\quad \{1,2\}\indep 4.
\]
As $1$ is separated from $4$ by $3$ in $\hatg$, the global Markov property additionally states $1 \indep 4 \mid 2$. The other conditional (and not marginal) independencies required by the global Markov property are
\[
1\indep 3 \mid 4,\quad 1\indep 4 \mid 3,\quad 2\indep 4 \mid 1.
\qedhere
\]
\end{example}

In the next sections, we show that \Cref{thm:Section2:CIImplied} already captures all conditional independencies that hold for general $\cg$-compatible drift. To this end, we consider Ornstein--Uhlenbeck processes and the graphical Lyapunov models obtained from their stationary distributions.

\section{Ornstein--Uhlenbeck Processes and Lyapunov Models}
\label{sec:LyapBackground}

This section briefly reviews Ornstein--Uhlenbeck processes and the associated graphical continuous Lyapunov models. We also recall basic facts regarding Gaussian conditional independence that will be used in \Cref{sec:LyapunovCI}.

\subsection{Ornstein--Uhlenbeck Processes}

The \emph{Ornstein--Uhlenbeck process} is the only non-trivial time-homogeneous Markov process that is both Gaussian and stationary \cite{doob1942brownian}.  In the $\nrnodes$-dimensional case, it is defined by the SDE
\begin{equation} \label{eq:ou_multidim}
    \mathrm{d} \mathbb{X}(t) = M (\mathbb{X}(t)-\mu) \,\mathrm{d}t + D \, \mathrm{d} \mathbb{W}(t)
\end{equation}
with an invertible \emph{drift matrix} $M \in \rr^{\nrnodes\times \nrnodes}$ encoding the interactions among the coordinates of $\mathbb{X}(t)$. 
The affine linear drift term introduces a restoring force towards the mean vector $\mu$.
Again we only consider the case with $D$ and the diffusion matrix $C=D D^T$ diagonal.

Evidently, \eqref{eq:ou_multidim} is a special case of
\eqref{def:stochastic differential equation} with $f$ being an affine
function given by $M$ and $\mu$.  In this case, the Lyapunov condition
on $f$ is satisfied if $M$ is a \emph{stable} matrix, that is, the
real parts of the eigenvalues of $M$ are negative.  For $M$ stable,
$\mathbb{X}(t)$ admits a unique stationary distribution that is Gaussian with mean $\mu$ \cite{jacobsen1993brief, risken1989fokker}.  The positive definite covariance matrix of the distribution is the unique solution $\Sigma\equiv\Sigma(M,C)$ of the \emph{continuous Lyapunov equation}
\[
M\Sigma + \Sigma \tp[-0]{M} = -C.
\]
We note that the solution $\Sigma$ is invariant under joint rescaling of $M$ and $C$. 

Given our interest in conditional independence structure, we restrict
ourselves, without loss of generality, to stationary distributions of
Ornstein--Uhlenbeck processes with mean $\mu = 0$. For a directed
graph $\cg$, the class of Gaussian distributions of interest 
naturally corresponds to the set of covariance matrices $\Sigma$
that can be achieved using stable matrices $M$ for which the linear
map $x \mapsto Mx$ is $\cg$-compatible.  In other words, we consider
stable matrices $M$ whose support (i.e., non-zero entries) is
contained in the edge set $E(\cg)$.

\begin{definition}[\cite{varando2020graphical,dettling2022identifiability}]
    Let $\cg = (V, E)$ be a directed graph on $V = [\nrnodes]$. Let $\rr^E$ be the set of matrices $M = (m_{ij}) \in \rr^{\nrnodes \times \nrnodes}$ such that $m_{ji} = 0$ if $i \to j \notin E$ and denote by $\pd_\nrnodes$ the cone of $\nrnodes \times \nrnodes$ positive definite matrices. Then for any fixed $C \in \pd_\nrnodes$, the (graphical continuous) \emph{Lyapunov model} of $\cg$ is the family of all multivariate Gaussian distributions $\cn(0,\Sigma)$ with covariance matrix $\Sigma$ in the~set
    \[
    \cm_{\cg,C} = \{ \Sigma \in \pd_\nrnodes ~:~ M \Sigma + \Sigma M^T + C = 0 \text{ for a stable matrix } M \in \rr^E\}.
    \]
\end{definition}
In the sequel, we identify the model with its set  of covariance matrices and refer to $\cm_{\cg,C}$ as the Lyapunov model. We denote the entries of a covariance matrix $\Sigma$ by $\sigma_{ij}$. 

\begin{convention}
We denote the $\nrnodes$-dimensional identity matrix by $\Id_\nrnodes$ or equivalently by $\Id_V$ for a set $V$ with $|V| = \nrnodes$. We will typically fix $C = 2 \Id_\nrnodes$ and write $\cm_\cg = \cm_{\cg, 2 \Id_\nrnodes}$. This~assumption guarantees that $\Id_\nrnodes \in \cm_{\cg, C}$ which is a desirable property for any Gaussian model. While the drift matrix $M$ is typically thought of as the weighted adjacency matrix of~$\cg$, we will usually suppress the self-loops in subsequent drawings of graphs for simplicity. 
\end{convention}

\subsection{Conditional Independence in Multivariate Gaussians}


Consider a symmetric matrix $\Sigma$ with rows and columns indexed by a finite set~$V$. For $I,J\subseteq V$, let $\Sigma_{I,J}$ denote the submatrix with rows indexed by $I$ and columns by~$J$. Submatrices of the form $\Sigma_K \defas \Sigma_{K,K}$ are \emph{principal}. 
Especially in subscripts, we will often abbreviate a union $I \cup J$ of index sets to~$IJ$. A single index $i$ is used interchangeably with its singleton set $\{i\}$ in these conventions.
So, $iK\defas \{i\} \cup K$ and $K \setminus i\defas K \setminus \{i\}$.

The starting point of our subsequent study of conditional independence is the following collection of results. Proofs can be found, for instance, in \S4.1 of \cite{sullivant2018algebraic}.

%
%


\begin{theorem}\label{thm:GaussianCI} 
Let $\Sigma\in\pd_n$.
The conditional independence (CI) statement $I \indep J \mid K$ holds in the Gaussian random vector $X \sim \cn(\mu, \Sigma)$ if and only if submatrix
$\Sigma_{IK, JK}$ has rank $|K|$. 
\end{theorem}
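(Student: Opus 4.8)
The plan is to reduce the rank condition to the classical description of Gaussian conditional independence through the conditional covariance (a Schur complement), and then to verify by block row and column reduction that the two conditions coincide.

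First I would invoke the standard fact about multivariate Gaussians. For jointly Gaussian $X\sim\cn(\mu,\Sigma)$, the conditional distribution of $(X_I,X_J)$ given $X_K=x_K$ is again Gaussian, and its cross-covariance block is the Schur complement
\[
S \defas \Sigma_{I,J} - \Sigma_{I,K}\,\Sigma_{K,K}^{-1}\,\Sigma_{K,J},
\]
which is well-defined because $\Sigma\in\pd_n$ forces the principal submatrix $\Sigma_{K,K}$ to be positive definite, hence invertible. Since for Gaussian vectors a vanishing covariance is equivalent to independence, the statement $X_I\indep X_J\mid X_K$ holds if and only if $S=0$. This turns the theorem into the purely linear-algebraic claim that $\rank\Sigma_{IK,JK}=|K|$ if and only if $S=0$.

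Next I would compute $\rank\Sigma_{IK,JK}$ by block elimination. Ordering the rows as $(I,K)$ and the columns as $(J,K)$, write
\[
\Sigma_{IK,JK}=\begin{pmatrix}\Sigma_{I,J} & \Sigma_{I,K}\\ \Sigma_{K,J} & \Sigma_{K,K}\end{pmatrix}.
\]
Left-multiplying by the invertible unitriangular matrix that subtracts $\Sigma_{I,K}\Sigma_{K,K}^{-1}$ times the $K$-rows from the $I$-rows clears the top-right block (and replaces the top-left block by $S$); right-multiplying by the analogous unitriangular matrix then clears the bottom-left block, giving
\[
\begin{pmatrix} \Id & -\Sigma_{I,K}\Sigma_{K,K}^{-1}\\ 0 & \Id\end{pmatrix}
\Sigma_{IK,JK}
\begin{pmatrix} \Id & 0\\ -\Sigma_{K,K}^{-1}\Sigma_{K,J} & \Id\end{pmatrix}
=\begin{pmatrix} S & 0\\ 0 & \Sigma_{K,K}\end{pmatrix}.
\]
As the two outer factors are invertible, rank is preserved, so $\rank\Sigma_{IK,JK}=\rank S+\rank\Sigma_{K,K}=\rank S+|K|$.

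Finally I would conclude: since $\Sigma_{K,K}$ already contributes rank exactly $|K|$, the submatrix $\Sigma_{IK,JK}$ has rank $|K|$ precisely when $\rank S=0$, i.e.\ $S=0$, which by the first step is equivalent to $X_I\indep X_J\mid X_K$. I expect the only delicate points to be the bookkeeping of the block operations---checking that the chosen unitriangular factors simultaneously annihilate the two off-diagonal blocks while leaving $S$ and $\Sigma_{K,K}$ intact---together with the appeal to positive definiteness of $\Sigma$, which both makes $\Sigma_{K,K}^{-1}$ available and guarantees $\rank\Sigma_{K,K}=|K|$. Everything else is routine, and the argument is essentially the one found in \S4.1 of \cite{sullivant2018algebraic}.
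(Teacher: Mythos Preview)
Your argument is correct. The paper does not supply its own proof of this theorem; it simply states the result and refers the reader to \S4.1 of \cite{sullivant2018algebraic}, so there is nothing to compare against beyond that citation. Your Schur-complement reduction is exactly the standard proof one finds there, and the block-elimination identity you wrote down checks out line by line.
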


Since Gaussian distributions satisfy the compositional graphoid axioms, the CI statement $I \indep J \mid K$ holds if and only if $i \indep j \mid K$ holds for all $i \in I$ and $j \in J$ \cite{lauritzen2018unifying}. Thus, to characterize the set of CI statements that a Gaussian distribution satisfies, it suffices to characterize all statements of the form $i \indep j \mid K$. This is summarized with the following corollary, which combines this fact with the previous theorem. 

\begin{corollary}
\label{cor:GaussianCI}
For a Gaussian random vector $X\sim \cn(\mu, \Sigma)$ with $\Sigma\in\pd_n$, the following statements are equivalent:
\begin{enumerate}[itemsep=0.3em, label=(\roman*)]
\item $I \indep J \mid K$,
\item $i \indep j \mid K$ for all $i \in I$ and $j \in J$, and
\item $\det(\Sigma_{iK, jK}) \equiv |\Sigma_{iK, jK}| = 0$ for all $i \in I$ and $j \in J$.
\end{enumerate}
\end{corollary}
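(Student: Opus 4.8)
The plan is to derive both equivalences by reducing everything to the singleton rank criterion of \Cref{thm:GaussianCI}. The equivalence (i) $\Leftrightarrow$ (ii) requires no computation: it is exactly the statement recalled immediately before the corollary, namely that because Gaussian distributions obey the compositional graphoid axioms, the composition and decomposition properties together give that $I \indep J \mid K$ holds precisely when $i \indep j \mid K$ holds for every $i \in I$ and $j \in J$. So this part is a direct restatement of the cited fact.

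For (ii) $\Leftrightarrow$ (iii), I would fix a single pair $i \in I$, $j \in J$ and examine the matrix $\Sigma_{iK, jK}$. Since the index sets are disjoint, this is a square matrix of order $|K| + 1$. Applying \Cref{thm:GaussianCI} with the singleton sets $\{i\}$ and $\{j\}$ in place of $I$ and $J$, the statement $i \indep j \mid K$ is equivalent to $\Sigma_{iK, jK}$ having rank exactly $|K|$. Quantifying over all pairs $(i,j)$ will then yield the full equivalence once the rank condition is translated into the determinantal condition in (iii).

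The one point that needs care is converting ``$\rank = |K|$'' into ``$\det = 0$'', since a vanishing determinant only forces rank at most $|K|$. Here I would invoke positive definiteness of $\Sigma$: the principal submatrix $\Sigma_K$ is positive definite, hence invertible, and it appears inside $\Sigma_{iK, jK}$ as the block indexed by the $K$-rows and $K$-columns. Therefore $\Sigma_{iK, jK}$ always has rank at least $|K|$, so its rank is either $|K|$ or $|K| + 1$; being a square matrix of order $|K| + 1$, it has rank $|K|$ if and only if $\det(\Sigma_{iK, jK}) = 0$. Combining this with the previous paragraph gives $i \indep j \mid K \Leftrightarrow |\Sigma_{iK, jK}| = 0$, and quantifying over $i \in I$, $j \in J$ establishes (ii) $\Leftrightarrow$ (iii). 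I expect the argument to be essentially bookkeeping; the only genuinely substantive step is this rank lower bound coming from positive definiteness, which is what bridges the rank condition of \Cref{thm:GaussianCI} with the determinantal condition of the corollary.
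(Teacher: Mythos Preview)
Your proposal is correct and follows essentially the same route as the paper: the text preceding the corollary already records (i)\,$\Leftrightarrow$\,(ii) via the compositional graphoid axioms and states that the corollary ``combines this fact with the previous theorem,'' i.e., applies \Cref{thm:GaussianCI} to singletons. Your extra observation that $\Sigma_K$ being positive definite forces $\rank \Sigma_{iK,jK}\ge |K|$, so that the rank-$|K|$ condition collapses to a vanishing determinant, is exactly the detail needed to pass from the rank statement of \Cref{thm:GaussianCI} to condition~(iii); the paper leaves this implicit.
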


This characterization of conditional independence for multivariate Gaussians will be used extensively in \Cref{sec:LyapunovCI}.

\section{Conditional Independence in Lyapunov Models}
\label{sec:LyapunovCI}

Our focus is now on the 
Lyapunov model of a directed graph $\cg = (V,E)$, which is given by the set of covariance matrices 
\[
\cm_{\cg} = \{\Sigma \in \pd_{\nrnodes} ~:~ M \Sigma + \Sigma M^T + 2 \Id_\nrnodes = 0  \text{ for a stable matrix } M \in \rr^E\}.
\]
\cref{thm:MarginalIndependence} applies to $\cm_\cg$ 
and shows that the absence of a trek in $\cg$ leads to a marginal independence, and \cref{thm:Section2:CIImplied} yields the implied 
additional conditional independence (CI)~statements for $\cm_\cg$.
The aim of this section is to prove that the resulting collection of CI~statements is \emph{complete}, by showing that for any other CI~statement, there is a distribution $\Sigma \in \cm_\cg$ which violates it.
More precisely, our main result is as follows.

\begin{theorem}
\label{thm:CIImpliedByMarginals}
Let $\cg$ be a directed graph with trek graph $\hatg$. Then the Lyapunov model $\cm_\cg$ is Markov-perfect to the marginal independence graph $\hatg$. In other words, $i \indep j \mid K$ holds for all $\Sigma \in \cm_\cg$ if and only if  $V \setminus (\{i ,j \} \cup K)$ separates $i$ and $j$ in~$\hatg$. 
\end{theorem}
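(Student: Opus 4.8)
The statement is an equivalence, and its forward direction is already available: a stable matrix $M\in\rr^E$ defines a linear, hence $\cg$-compatible, drift, and with the positive definite diagonal diffusion $C=2\Id_\nrnodes$ the model $\cm_\cg$ consists of stationary distributions to which \Cref{thm:Section2:CIImplied} applies. Thus separation of $i$ and $j$ by $V\setminus(\{i,j\}\cup K)$ in $\hatg$ already forces $i\indep j\mid K$ for every $\Sigma\in\cm_\cg$. The entire difficulty lies in the converse \emph{perfectness} statement: whenever $V\setminus(\{i,j\}\cup K)$ does \emph{not} separate $i$ and $j$ in $\hatg$, I must produce a single $\Sigma\in\cm_\cg$ with $i\not\indep j\mid K$. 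By \Cref{cor:GaussianCI} this means producing $\Sigma$ with $\det(\Sigma_{iK,jK})\neq 0$, equivalently (a Schur complement, using $\Sigma_K\succ 0$) with nonvanishing partial covariance $\sigma_{ij\cdot K}$.

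First I would convert the failure of separation into combinatorial data. Non-separation yields a simple path $i=a_0\leftrightarrow a_1\leftrightarrow\cdots\leftrightarrow a_r=j$ in $\hatg$ whose interior nodes $a_1,\dots,a_{r-1}$ all lie in $K$, and each bidirected edge $a_{l-1}\leftrightarrow a_l$ certifies a trek $T_l$ in $\cg$ between its endpoints. Let $\cg'$ be the subgraph of $\cg$ formed by the union of $T_1,\dots,T_r$ (with all self-loops), arranged so the treks meet only at the shared path nodes; this exhibits $\cg'$ as a \emph{1-clique sum of treks}. I would then restrict to drift matrices $M\in\rr^{E(\cg')}\subseteq\rr^E$ that vanish on all edges outside $\cg'$ and carry strong self-loops on the remaining nodes. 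Since $\Sigma$ respects the trek structure (\Cref{thm:MarginalIndependence}), the isolated nodes form a block with vanishing cross-covariances, so $\Sigma$ is block diagonal with a diagonal block on those nodes. Consequently $\det(\Sigma_{iK,jK})$ factors as a nonzero multiple of $\det(\Sigma_{iK',jK'})$, where $K'=K\cap V(\cg')$. This is the reduction promised in the introduction: it suffices to show that $\det(\Sigma_{iK',jK'})$ does not vanish identically on $\cm_{\cg'}$ for a 1-clique sum of treks.

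For this core case I would exploit the integral representation $\Sigma=2\int_0^\infty e^{Mt}e^{M^{T}t}\,\mathrm{d}t$ and expand it in the off-diagonal (edge-weight) entries of $M=\Lambda+W$, with $\Lambda$ the diagonal of self-loops. This produces a trek-rule expansion in which the lowest-order contribution to each entry $\sigma_{a_la_m}$ is governed by the treks of $\cg'$ joining $a_l$ and $a_m$. I would extract the leading term of the polynomial $\det(\Sigma_{iK',jK'})$ in these edge weights and argue it is a nonzero polynomial, the 1-clique sum structure being exactly what makes the surviving contributions organize as a product over the constituent treks. Because $\cm_{\cg'}$ is the image of an open set of stable matrices under a rational map, a single point at which this minor is nonzero shows it is nonzero on a dense subset; lifting back then gives $\Sigma\in\cm_\cg$ with $i\not\indep j\mid K$.

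The main obstacle is precisely this last non-vanishing. A priori the trek-rule expansion of the minor could cancel: for a Markov-chain-type covariance the analogous determinant does vanish, reflecting d-separation rather than trek separation, and it is conditioning on the interior nodes $K'$ that makes cancellation a real danger. Taming it is what forces the passage to 1-clique sums of treks, where the combinatorics is transparent enough to isolate a single surviving leading term; I expect the argument there to run by induction on the number $r$ of treks, extending the witness one trek at a time while keeping the partial covariance $\sigma_{ij\cdot K'}$ nonzero.
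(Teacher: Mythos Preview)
Your overall architecture matches the paper's: reduce to a subgraph witnessing the connecting path in $\hatg$, then show the relevant almost-principal minor is a nonzero polynomial on that subgraph and lift back. There is, however, a genuine gap in the combinatorial reduction. You write that the union of the treks $T_1,\dots,T_r$ can be ``arranged so the treks meet only at the shared path nodes'' and hence forms a 1-clique sum glued at the $a_l$. This is not something you can arrange: the $T_l$ are fixed subgraphs of $\cg$ and may share interior nodes in several ways. The paper's proof handles this explicitly (four cases, depending on whether the intersection sits on the left or right side of each trek). Three of the cases let you shorten the $\hatg$-path, but the ``right-left'' case cannot be absorbed this way; it forces a collider together with a \emph{tail} whose sink is the old path node in $K$. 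So the subgraph you must analyze is not a chain of treks glued at their endpoints but the paper's zig-zag with tails, and the induction (\Cref{prop:NoCIsOnZigZag}) crucially uses that each collider has a descendant sink in $K$ --- information your reduction discards.

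A second, smaller gap concerns the non-vanishing on a single trek. Your plan to ``extract the leading term'' of $\det(\Sigma_{iK',jK'})$ and check it is nonzero does not go through directly: the paper shows (\Cref{lemma:AdjointDegreeJump}) that the naive lowest-degree term \emph{does} cancel when the two nodes lie on the same branch, and for arbitrary $K'\subsetneq V(\ct)\setminus\{i,j\}$ the paper needs an additional limiting argument (a perfect-correlation construction, \Cref{lemma:PerfectCorrelationMatrix} and \Cref{cor:PerfectCorrelationDeterminant}) to pass from the full conditioning set down to subsets. Your sketch does not anticipate either of these obstacles.
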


The proof of this theorem will be completed at the end of this section. As noted above, the direction to be proven is the fact that a CI statement holds for all $\Sigma \in \cm_\cg$ only if we have graphical separation.

\subsection{Outline of the Proof}

We will prove the contrapositive, 
i.e., we will show that if $V \setminus (\{i ,j \} \cup K)$ does not separate $i$ and $j$ in~$\hatg$, then there exists a matrix $\Sigma \in \cm_\cg$ such that $i \not \indep j \mid K$.  To do this, we first clarify that we may reduce the problem to studies of suitable subgraphs, and we introduce a trek rule which gives a polynomial parameterization of a submodel of $\cm_\cg$ in terms of the entries of $M$ (\cref{sec:sub-subgraphs-trek-rule}). We then show how to apply this restricted trek rule to the case where $\cg$ is a single trek in order to produce covariance matrices $\Sigma$ in $\cm_\cg$ such that the endpoints of the trek are conditionally dependent given all other nodes on the trek 
Via a trick of creating a limiting scenario of \emph{perfect correlation}, we  extend this result to more general conditioning sets comprised of arbitrary subsets of the other nodes on the trek (\cref{sec:sub-CI-trek}).
Finally, to complete the proof, we extend the previous results from a single trek to \emph{zig-zag} graphs, which correspond to paths that are connecting for the global Markov property described in the proof of \cref{thm:Section2:CIImplied} (\cref{sec:sub-CI-zig-zag}).  We note that these \emph{zig-zag} graphs also appear in the description of the d-separation criterion for structural equation models; cf.~\cite[Section~4.3]{MatusMinors}. 

\subsection{Subgraphs and a Restricted Trek Rule}
\label{sec:sub-subgraphs-trek-rule}

The following lemma clarifies how to pass to subgraphs for the construction of counterexamples to CI statements.

\begin{lemma} \label{lemma:Specialization}
Let $\cg = (V, E)$ be a directed graph, and let $\cg' = (V', E')$ be a subgraph of $\cg$. 
If $i \not\indep j \mid K'$ on $\cm_{\cg'}$, then $i \not\indep j \mid K$ on $\cm_{\cg}$ for all $K \subseteq V$ with $K \cap V' = K'$.
\end{lemma}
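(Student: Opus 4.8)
The plan is to lift a witnessing distribution from the submodel $\cm_{\cg'}$ to the full model $\cm_{\cg}$ by embedding the drift matrix block-diagonally, coupling the extra coordinates $W \defas V \setminus V'$ to an independent standard-normal block. By \Cref{cor:GaussianCI}, the hypothesis $i \not\indep j \mid K'$ on $\cm_{\cg'}$ furnishes a stable matrix $M' \in \rr^{E'}$ whose Lyapunov solution $\Sigma' \in \cm_{\cg'}$ satisfies $\det(\Sigma'_{iK',jK'}) \neq 0$. Ordering the coordinates as $(V', W)$, I would take
\[
M \defas \begin{pmatrix} M' & 0 \\ 0 & -\Id_W \end{pmatrix}.
\]
This $M$ is stable, since its spectrum is that of $M'$ together with the eigenvalue $-1$, and it lies in $\rr^{E}$: the $V'$-block is supported on $E' \subseteq E$, the $W$-block uses only self-loops (which are in $E$ by convention), and the zero cross-blocks are permitted because $\rr^E$ only requires the support to be contained in $E$.

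Next I would identify the associated covariance matrix. Let $\Sigma$ be the unique positive definite solution of $M\Sigma + \Sigma M^T + 2\Id_V = 0$, so that $\Sigma \in \cm_{\cg}$. Because $M$ is block-diagonal, this equation decouples across the blocks of $\Sigma$. The $(V',V')$-block satisfies $M'\Sigma_{V'V'} + \Sigma_{V'V'}{M'}^{T} + 2\Id_{V'} = 0$, so by uniqueness $\Sigma_{V'V'} = \Sigma'$; the $(W,W)$-block gives $-2\Sigma_{WW} + 2\Id_W = 0$, hence $\Sigma_{WW} = \Id_W$; and the off-diagonal block obeys $(M' - \Id_{V'})\Sigma_{V'W} = 0$. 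Since $M'$ is stable, $1$ is not an eigenvalue of $M'$, so $M' - \Id_{V'}$ is invertible and $\Sigma_{V'W} = 0$. Thus $\Sigma$ is block-diagonal with blocks $\Sigma'$ and $\Id_W$.

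Finally, I would read off the conditional dependence. Write $K'' \defas K \cap W$, so that $K = K' \cup K''$ with $K' \subseteq V'$ and $K'' \subseteq W$. As $i, j \in V'$, the block-diagonal form of $\Sigma$ makes the submatrix $\Sigma_{iK, jK}$ itself block-diagonal, with $V'$-block $\Sigma'_{iK',jK'}$ and $W$-block $\Id_{K''}$. Therefore
\[
\det(\Sigma_{iK,jK}) = \det(\Sigma'_{iK',jK'}) \cdot \det(\Id_{K''}) = \det(\Sigma'_{iK',jK'}) \neq 0,
\]
and \Cref{cor:GaussianCI} gives $i \not\indep j \mid K$ on $\cm_{\cg}$, as desired; note that the single matrix $\Sigma$ in fact works simultaneously for every admissible $K$. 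The only step requiring care is the vanishing of the cross-block $\Sigma_{V'W}$, which is exactly where stability of $M'$ is used: it makes the Sylvester equation $M'\Sigma_{V'W} = \Sigma_{V'W}$ have only the trivial solution. Everything else is bookkeeping with the block structure and is routine.
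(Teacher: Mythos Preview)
Your proof is correct and follows essentially the same approach as the paper: both construct the block-diagonal drift $M=\mathrm{diag}(M',-\Id_W)$ and the resulting block-diagonal covariance $\Sigma=\mathrm{diag}(\Sigma',\Id_W)$, then read off $|\Sigma_{iK,jK}|=|\Sigma'_{iK',jK'}|\neq 0$. The only cosmetic difference is that the paper posits the block-diagonal $\Sigma$ and verifies the Lyapunov equation directly, whereas you derive the block structure from uniqueness of the solution (including the Sylvester argument for the cross-block), which is equally valid and perhaps a touch more explicit.
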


\begin{proof}
The hypothesis that $i \not\indep j \mid K'$ means that there exists $\Sigma' \in \cm_{\cg'}$ for which ${|\Sigma'_{iK',jK'}| \neq 0}$. Let~$M'$ be the corresponding parameter matrix, i.e., $M'\Sigma' + \Sigma' {M'}^T + 2\,\Id_{V'} = 0$. Denote $V'' = V \setminus V'$. Then~the matrix equation with blocks indexed by $V'$ and $V''$
\[
  \underbrace{\begin{pmatrix}
    M' & 0 \\ 0 & -\Id_{V''}
  \end{pmatrix}}_{\asdef M}
  \underbrace{\begin{pmatrix}
    \Sigma' & 0 \\ 0 & \Id_{V''}
  \end{pmatrix}}_{\asdef \Sigma} + 
  \underbrace{\begin{pmatrix}
    \Sigma' & 0 \\ 0 & \Id_{V''}
  \end{pmatrix}}_{= \Sigma}
  \underbrace{\begin{pmatrix}
    {M'}^T & 0 \\ 0 & -\Id_{V''}
  \end{pmatrix}}_{= M^T} +
  \underbrace{\begin{pmatrix}
    2\,\Id_{V'} & 0 \\ 0 & 2\,\Id_{V''}
  \end{pmatrix}}_{= 2\,\Id_V} = 0
\]
shows that $\Sigma \in \cm_{\cg}$. Since $K \cap V' = K'$, it follows that $|\Sigma_{iK,jK}| = |\Sigma'_{iK',jK'}| \neq 0$.
\end{proof}

The previous lemma will be used throughout this section to reduce the problem of determining whether a conditional dependence holds to a minimal subgraph which exhibits it. The next proposition is then our major tool to study the resulting minimal subgraphs.

\begin{proposition}[Restricted trek rule]
\label{prop:TrekRule}
Let $\cg= (V, E)$ be a DAG, and consider the slice of the model $\cm_{\cg}$ obtained by restricting the drift matrix $M=(m_{ij})$ to have diagonal entries $m_{ii} = \sfrac{-1}{2\zeta}$ for all $i \in V$. Then the following \emph{trek rule} holds for the entries of the covariance $\Sigma=(\sigma_{ij})$ associated to $M$:
\begin{equation}
  \label{eq:TrekRule}
  \sigma_{ij} = \sum_{\substack{T : \text{$(\ell, r)$-trek} \\ \text{from $i$ to $j$}}} 2 \zeta^{\ell+r+1} \binom{\ell+r}{\ell} m_T,
\end{equation}
where the \emph{trek monomial} $m_T$ associated to a trek $T$ is given by $m_T = \prod_{e \in T}m_e$, i.e., the product over all the edges $e$ in that trek.
\end{proposition}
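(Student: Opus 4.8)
The plan is to solve the restricted Lyapunov equation explicitly by the standard integral representation and then expand the matrix exponentials into a finite sum whose terms are indexed by treks. Since $M$ is restricted to $\rr^E$ with the prescribed diagonal, I would write $M = -\tfrac{1}{2\zeta}\Id_\nrnodes + N$, where $N$ collects the off-diagonal entries, so that $N_{ab}$ is the weight of the edge $b \to a$ (following the convention $m_{ji}\neq 0$ only if $i \to j \in E$). Because $\cg$ is a DAG, ordering $V$ topologically makes $N$ strictly triangular; hence $N$ is nilpotent and every eigenvalue of $M$ equals $-\tfrac{1}{2\zeta}$. For $\zeta > 0$ this makes $M$ stable, so the unique solution of $M\Sigma + \Sigma M^T + 2\Id_\nrnodes = 0$ admits the standard integral representation
\[
\Sigma = 2\int_0^\infty e^{tM} e^{tM^T}\,\mathrm{d}t.
\]

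First I would use that the scalar matrix $-\tfrac{1}{2\zeta}\Id_\nrnodes$ commutes with $N$ to factor $e^{tM} = e^{-t/(2\zeta)}\, e^{tN}$, so the integrand becomes $e^{-t/\zeta}\, e^{tN} e^{tN^T}$. Nilpotency of $N$ turns $e^{tN} = \sum_{\ell \geq 0} \tfrac{t^\ell}{\ell!} N^\ell$ and $e^{tN^T} = \sum_{r \geq 0} \tfrac{t^r}{r!}(N^T)^r$ into \emph{finite} sums, so there are no convergence issues and summation and integration may be interchanged freely. Evaluating the resulting scalar integrals via the Gamma function, $\int_0^\infty e^{-t/\zeta}\, t^{\ell+r}\,\mathrm{d}t = (\ell+r)!\,\zeta^{\ell+r+1}$, yields
\[
\Sigma = 2\sum_{\ell, r \geq 0} \frac{(\ell+r)!}{\ell!\,r!}\,\zeta^{\ell+r+1}\, N^\ell (N^T)^r = 2\sum_{\ell, r \geq 0} \zeta^{\ell+r+1}\binom{\ell+r}{\ell}\, N^\ell (N^T)^r.
\]

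It then remains to read off the $(i,j)$-entry of $N^\ell (N^T)^r$ combinatorially. With the orientation convention above, the entry $(N^\ell)_{it}$ is the sum, over directed paths of length $\ell$ from $t$ to $i$, of the product of their edge weights, while $\bigl((N^T)^r\bigr)_{tj}$ is the analogous sum over directed paths of length $r$ from $t$ to $j$. Summing over the shared top node $t$, the entry $\bigl(N^\ell (N^T)^r\bigr)_{ij}$ equals $\sum_T m_T$ over all $(\ell, r)$-treks $T$ from $i$ to $j$; substituting this into the display above produces \eqref{eq:TrekRule}, with the edge case $\ell = r = 0$ correctly contributing the length-zero trek (a single node, $m_T = 1$) that gives $2\zeta$ to each $\sigma_{ii}$. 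I expect the main obstacle to be purely bookkeeping: keeping the transpose and the edge-orientation convention straight so that the two arms of each trek are identified as a length-$\ell$ path into $i$ and a length-$r$ path into $j$ emanating from the common top node, and verifying that the matrix expansion ranges over exactly the collection of treks in the definition (with possibly overlapping arms), rather than over a more restrictive family.
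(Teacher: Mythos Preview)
Your argument is correct and constitutes a genuinely different proof from the paper's.  The paper proceeds by induction: it writes the $(i,j)$-entry of the Lyapunov equation as a recursion
\[
  \sigma_{ij} = \zeta\Bigl(C_{ij} + \sum_{l \in \pa(j)} m_{jl}\,\sigma_{li} + \sum_{k \in \pa(i)} m_{ik}\,\sigma_{kj}\Bigr),
\]
orders the entries lexicographically along a topological ordering of~$\cg$, and then verifies the trek formula term by term, with Pascal's identity $\binom{\ell+r-1}{\ell}+\binom{\ell+r-1}{\ell-1}=\binom{\ell+r}{\ell}$ supplying the binomial coefficient at each step.  Your route instead invokes the closed-form integral representation $\Sigma = 2\int_0^\infty e^{tM}e^{tM^T}\,\mathrm{d}t$, exploits the DAG assumption to make $N$ nilpotent so that the exponentials are polynomials in~$t$, and obtains the binomial coefficient in one stroke as $(\ell+r)!/(\ell!\,r!)$ from the Gamma integral.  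Your approach is slicker and more conceptual---it explains \emph{why} the coefficient is a binomial coefficient rather than discovering it via Pascal---and it avoids setting up the lexicographic induction.  The paper's recursion, on the other hand, is entirely elementary (no matrix exponentials or integral formula needed) and is organized so that the same recursive relation can be reused later.  Your remark about bookkeeping is apt but you have handled the orientation correctly; the only point worth making explicit in a final write-up is that the two arms of a trek may share edges (as in the example $3\leftarrow 1\to 3\to 4$ in the paper), and the matrix product $N^\ell (N^T)^r$ enumerates exactly such sequences with multiplicity, matching the paper's convention that $m_T$ is the product over the $\ell+r$ edges of the trek counted with repetition.
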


\begin{proof}
The Lyapunov equation $M\Sigma + \Sigma M^T = -C$ is linear in $\Sigma$, and 
the $ij$-th element of the matrix equation reads
\[
  (m_{ii} + m_{jj}) \sigma_{ij} + \sum_{l \in \pa(j)} m_{jl} \sigma_{li} + \sum_{k \in \pa(i)} m_{ik} \sigma_{kj} = -C_{ij}.
\]
Note that the coefficient $m_{ii} + m_{jj}$ of $\sigma_{ij}$ is always $\sfrac{-1}{\zeta}$. Solving for $\sigma_{ij}$, we obtain
\begin{align}
  \label{eq:VecLyapunov}
  \sigma_{ij} = \zeta\cdot\left(C_{ij} + \sum_{l \in \pa(j)} m_{jl} \sigma_{li} + \sum_{k \in \pa(i)} m_{ik} \sigma_{kj}\right).
\end{align}
These relations motivate a proof of \eqref{eq:TrekRule} by induction. 

We fix a topological ordering of the DAG~$\cg$ and extend it to a lexicographic ordering on the entries of $\Sigma$ via $\sigma_{ij} < \sigma_{kl}$ if and only if $j < l$ or $j = l$ and $i < k$. We perform induction with respect to this ordering, noting that the right-hand side of \eqref{eq:VecLyapunov} contains only $\Sigma$-entries which are strictly smaller than $\sigma_{ij}$ on the left-hand side.
The smallest $\Sigma$-entries are $\sigma_{ii}$ where $i$ is a source node. In this case $\sigma_{ii} = \zeta C_{ii} = 2\zeta$ follows immediately from the Lyapunov equation for our choice of $C = 2\Id_V$. This verifies the claim as there is only the empty trek on~$i$. \Cref{thm:MarginalIndependence} or equation~\eqref{eq:VecLyapunov} can be used to see that if $i$ and $k$ are distinct source nodes (having in particular no treks between them), then $\sigma_{ki} = 0$. Now let $j$ be a node all of whose parents are source nodes; the edges into such nodes $j$ form the next smallest class of $\Sigma$-entries in the lexicographic ordering. The Lyapunov equation simplifies to $\sigma_{ij} = \zeta \sum_{k \in \pa(j)} m_{jk} \sigma_{ki} = \zeta m_{ji} \sigma_{ii} = 2 \zeta^2 m_{ji}$ which corresponds to the unique trek between $i$ and~$j$ of length $\ell+r = 1$. These two cases prove the initial induction hypothesis.

Fix any two (not necessarily distinct) vertices $i, j$ and a non-empty $(\ell,r)$-trek $T$ between them. This trek decomposes into a unique $(\ell,r-1)$-trek $T_1$ from $i$ to $l$, for $l \in \pa(j)$, and the edge $l \to j$. It also decomposes uniquely into an $(\ell-1,r)$-trek $T_2$ from $k$ to $j$, for $k \in \pa(i)$, together with the edge $k \to i$. By \eqref{eq:VecLyapunov}, the coefficient of $m_T$ in $\sigma_{ij}$ is thus $\zeta$ times the sum of the coefficients of $m_{T_1}$ in $\sigma_{il}$ and $m_{T_2}$ in $\sigma_{kj}$. Using the induction hypothesis (since $\sigma_{il}$ and $\sigma_{kj}$ are both less than $\sigma_{ij}$) gives
\[
  \text{coefficient of $m_T$} = \zeta\cdot\left(2 \zeta^{\ell+r} \binom{\ell+r-1}{\ell} + 2 \zeta^{\ell+r} \binom{\ell+r-1}{\ell-1}\right) = 2 \zeta^{\ell+r+1} \binom{\ell+r}{\ell},
\]
as claimed. This immediately proves the induction step when $i \neq j$. In case $i = j$, there is also the empty trek from $i$ to $i$ to account for and its trek term arises as required from \eqref{eq:VecLyapunov} via $C_{ii} = 2$.
\end{proof}

\begin{figure}
\centering
\begin{tikzpicture}
\node [draw, circle] (i) {\strut$2$};
\node [above right = 1 of i, draw, circle, radius = .2cm] (t1) {\strut$1$};
\node [below right = 1 of t1, draw, circle] (k1) {\strut$3$};
\node [below = 2 of t1, draw, circle] (l1) {\strut$4$};

\draw [->] (t1) -- (i);
\draw [->] (t1) -- (k1);
\draw [->] (k1) -- (l1);
\draw [->] (i) -- (l1);

\end{tikzpicture}
\caption{The diamond graph discussed in \Cref{ex:TrekRule}.}
\label{fig:TrekRuleExample}
\end{figure}
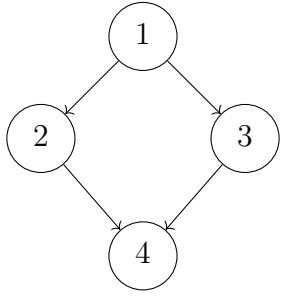

\begin{example}
\label{ex:TrekRule}
Consider the graph in \Cref{fig:TrekRuleExample}. Using the trek rule formulation for $\sigma_{ij}$ from \cref{prop:TrekRule}, we get 
\begin{eqnarray*}
    \sigma_{11} &=& 2\zeta, \\
    \sigma_{22} &=& 4\zeta^3m_{21}^2 +2\zeta, \\
    \sigma_{33} &=& 4\zeta^3m_{31}^2 +2\zeta, \\
    \vdots \\
    \sigma_{34} &=& 6\zeta^4m_{21}m_{31}m_{42} + 6\zeta^4m_{31}^2m_{43} + 2\zeta^2m_{43}.
\end{eqnarray*}
For instance, observe that there are two treks from the vertex $2$ to itself, the empty trek (corresponding to the monomial $2\zeta$ for $\sigma_{22}$) and the trek $2\leftarrow 1 \rightarrow 2$ ($4\zeta^3m_{21}^2$). Similarly, there are three treks between the vertices $3$ and $4$, which are the edge $3\rightarrow 4$ ($2\zeta^2m_{43}$), the trek $2\leftarrow 1 \rightarrow 3 \rightarrow 4$ ($6\zeta^4m_{21}m_{31}m_{42}$), and $3\leftarrow 1 \rightarrow 3 \rightarrow 4 $ ($6\zeta^4m_{31}^2m_{43}$). For the last trek $3\leftarrow 1 \rightarrow 3 \rightarrow 4$, we have $\ell=1$ and $r=2$, which gives us the coefficient $6\zeta^4$.
\end{example}

This trek rule is reminiscent of the original trek rule for linear structural equation models \cite{draisma2010trek,sullivant2008algebraic}. However, now each trek monomial is weighted by a binomial coefficient. While the trek rule we present here only parameterizes a proper subset of $\cm_\cg$ and is less general than that of \cite{varando2020graphical}, it provides an explicit rational formula for the entries of $\Sigma$ in terms of the parameters $M$ which makes it significantly easier to construct explicit covariance matrices in $\cm_\cg$ with the desired dependence properties. We remark that the very recent preprint of \cite{Hansen2024} gives a generalization of \Cref{prop:TrekRule}.

\subsection{Conditional Independence on Treks}
\label{sec:sub-CI-trek}

We now turn to the case where the considered graph is a single trek, for which we show that the associated Lyapunov model does not satisfy any CI~relations.
Before proceeding, we note that if $\ct$ is a trek from node 1 to $\nrnodes$ with top node $t$, then  a drift matrix $M\in\mathbb{R}^{E(\ct)}$ 
    has zero-pattern
    \begin{equation}
    \label{eq:trek-drift-matrix}
    M = \left(\begin{array}{ccccccc}
        d_1 & * & & & & & \\
        & \ddots & \ddots & & & & \\
        & & d_{t-1} & * & & & \\
        & & & d_t & & & \\
        & & & * & d_{t+1} & & \\
        & & & & \ddots & \ddots & \\
        & & & & & * & d_\nrnodes \\
    \end{array}\right)
    \end{equation}
    with diagonal entries $d_1,\dots,d_\nrnodes$ and the symbol $*$
    indicating arbitrary values.  Such matrices are stable if and only
    if the diagonal entries are negative.


\begin{proposition}
\label{Prop:TrekDependence}
Let $\ct$ be an $(\ell,r)$ trek between $i$ and $j$. Let $K = V(\ct) \setminus \{i, j\}$ be the remaining vertices. Then $i \not \indep j \mid K$ in $\cm_\ct$. 
\end{proposition}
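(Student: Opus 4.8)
The plan is to reduce the statement to the non-vanishing of a single polynomial and then to isolate one monomial in it whose coefficient is provably nonzero. By \Cref{cor:GaussianCI}, the relation $i \indep j \mid K$ holds for a fixed $\Sigma$ if and only if $\det(\Sigma_{iK,jK}) = 0$; since $K = V(\ct)\setminus\{i,j\}$, here $iK = V(\ct)\setminus\{j\}$ and $jK = V(\ct)\setminus\{i\}$, so this determinant is, up to sign, the minor of $\Sigma$ obtained by deleting row $j$ and column $i$, and its vanishing is equivalent to $(\Sigma^{-1})_{ij}=0$. A single trek is a DAG, and on the slice of \Cref{prop:TrekRule} the drift matrix is, after a topological reordering, triangular with diagonal entries $-1/(2\zeta)$, hence stable for every $\zeta>0$ while the off-diagonal weights $m_e$ range freely. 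Thus this slice lies in $\cm_\ct$, and the restricted trek rule turns $\det(\Sigma_{iK,jK})$ into an explicit polynomial $P(\zeta,m)$ in $\zeta$ and the edge weights. It therefore suffices to show that $P \not\equiv 0$: then $P$ is nonzero on a dense open subset of the nonempty parameter region $\{\zeta>0\}$, yielding a matrix $\Sigma\in\cm_\ct$ with $i\not\indep j\mid K$.

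To show $P\not\equiv 0$ I would track the coefficient of the multilinear full trek monomial $m_\ct = \prod_{e\in\ct}m_e$, the product of all $\ell+r$ edge weights of $\ct$, each to the first power. Expanding the determinant as $\sum_\pi \sgn(\pi)\prod_a \sigma_{a,\pi(a)}$ and substituting the trek rule for each entry, a contribution to $m_\ct$ must choose, for every row $a$, a trek from $a$ to $\pi(a)$ so that the chosen treks are pairwise edge-disjoint and together use every edge of $\ct$ exactly once; call such a choice a disjoint trek system $S$. Because the trek graph consists of two directed paths emanating from the top node $t$, the edge-disjoint treks are easy to classify: directed subpaths within a single branch, and cross-treks through $t$ using an initial segment of each branch. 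A key bookkeeping point is that the $\zeta$-degree is the same for every such system: each of the $\ell+r$ matched pairs contributes a factor $2\zeta^{L_T+1}$, and since the systems use all $\ell+r$ edges exactly once, one obtains the uniform weight $2^{\ell+r}\zeta^{2(\ell+r)}$. Hence the coefficient of $m_\ct$ in $P$ equals
\[
  2^{\,\ell+r}\,\zeta^{\,2(\ell+r)}\cdot N, \qquad N \defas \sum_{S}\sgn(\pi_S)\prod_{T\in S}\binom{\ell_T+r_T}{\ell_T},
\]
the sum being over all disjoint trek systems $S$ that cover $E(\ct)$.

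Everything now rests on showing $N\neq 0$, and I expect this combinatorial identity to be the main obstacle. In the cases $\ell=r=1$ and $\ell=2,r=1$ one computes $N=1$, and I would prove $N=1$ in general by induction on $\ell+r$, peeling off the unique edge into a leaf (equivalently, the trek in $S$ that must cover it) and matching the resulting signed binomial sums. The conceptual reason to expect non-vanishing is instructive: if the binomial weights $\binom{\ell_T+r_T}{\ell_T}$ were all replaced by $1$, the sum would become $\sum_S\sgn(\pi_S)$, which is exactly the coefficient of $m_\ct$ in the ordinary trek-rule (linear SEM) covariance with identity error variances; there it vanishes precisely because $t$ d-separates $i$ and $j$ given $K$. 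Thus the cancellation forced by d-separation in structural equation models is exactly what the binomial weights of \Cref{prop:TrekRule} destroy, and verifying $N=1$ makes this quantitative. Once $N\neq0$ is established, $m_\ct$ survives in $P$ with nonzero coefficient, so $P\not\equiv 0$ and the proposition follows.
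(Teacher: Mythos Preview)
Your plan fails precisely when $\ct$ is a directed path ($\ell=0$ or $r=0$), a case the proposition explicitly covers. On a path every simple sub-trek lies in a single branch and carries binomial weight $\binom{k}{0}=1$, so your signed sum $N$ coincides with its SEM analogue and vanishes --- exactly as your own d-separation heuristic predicts (any node of $K$ blocks the path). A direct check for $1\to 2\to 3$ gives $|\Sigma_{\{1,2\},\{2,3\}}| = 4\zeta^6 m_{21}^3 m_{32}$, in which the multilinear monomial $m_{21}m_{32}$ is indeed absent. The paper runs into the same obstruction and resolves the path case by a completely separate argument, tracking the \emph{highest}-degree term in $\zeta$ and evaluating an explicit Pascal-type determinant; your proposal contains no mechanism to handle this.

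For honest treks with $\ell,r\ge 1$ your approach is essentially the paper's in disguise. A short counting argument (comparing the number of nodes of $iK$ and of $jK$ below a given edge of $\ct$) shows that every edge must be traversed at least once by any system of treks indexed by a bijection $iK\to jK$; hence the terms of minimal total $m$-degree in $|\Sigma_{iK,jK}|$ are exactly your edge-disjoint covers of $E(\ct)$, and your $N$ is the coefficient of the lowest-order $\zeta$-term that the paper isolates after setting all $m_e=1$. That coefficient is computed there, via a Schur-complement induction rather than by peeling off a leaf, as $N=(-1)^{\ell+r+1}\binom{\ell+r-2}{\ell-1}$. So your claim $N=1$ is already wrong at $(\ell,r)=(1,1)$, where $N=-1$, and at $(\ell,r)=(2,2)$, where $N=-2$; the non-vanishing is genuine, but the induction target must be this signed binomial, not~$1$.
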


The proof of this proposition and its several supporting lemmas can be found in \cref{appendix:TrekDependence}. The following example provides an explicit family of covariance matrices in the Lyapunov model for a small trek which exhibit the conditional dependence stated in \Cref{Prop:TrekDependence}. The proof of the previous proposition generalizes this family of covariance matrices to larger treks and shows that the conditional dependence still holds by inductively computing certain entries in the adjoint matrix of $\Sigma_{iK, jK}$. The Schur complement expansion of $|\Sigma_{iK, jK}|$ with respect to the principal minor $\Sigma_{K,K}$ can then be used to show that $|\Sigma_{iK, jK}| \neq 0$ by inductively computing the lowest degree term which appears in $|\Sigma_{iK, jK}|$ when it is viewed as a polynomial in $\zeta$.

\begin{figure}
\centering
\begin{tikzpicture}
\node [draw, circle] (i) {\strut$2$};
\node [above right = 1 of i, draw, circle, radius = .2cm] (t1) {\strut$1$};
\node [below right = 1 of t1, draw, circle] (k1) {\strut$3$};
\node [above right = 1 of k1, draw, circle] (l1) {\strut$4$};

\draw [->] (t1) -- (i);
\draw [->] (t1) -- (k1);
\draw [->] (k1) -- (l1);

\node [below = .5 of k1] {$\ct$};

\node [right = 2 of l1] {};
\end{tikzpicture}
\begin{tikzpicture}
\node [draw, circle] (i) {\strut$3$};
\node [above right = 1 of i, draw, circle, radius = .2cm] (t1) {\strut$1$};
\node [below right = 1 of t1, draw, circle] (k1) {\strut$4$};
\node [above right = 1 of k1, draw, circle] (l1) {\strut$2$};
\node [below right = 1 of l1, draw, circle] (j) {\strut$5$};

\draw [->] (t1) -- (i);
\draw [->] (t1) -- (k1);
\draw [->] (l1) -- (k1);
\draw [->] (l1) -- (j);

\node [below= .5 of k1] {$\cz$};
\end{tikzpicture}
\caption{A trek $\ct$ which exhibits no conditional independence statements and a \emph{zig-zag} graph $\cz$ which exhibits some marginal independencies.}
\label{fig:Trek with no CI}
\end{figure}

\begin{example}
\label{ex:Trek with no CI statements}
Let $\ct$ be the trek as shown in \Cref{fig:Trek with no CI}. Setting every edge parameter $m_{ji}$ to $1$ for all $i\rightarrow j \in E(\ct)$,
we get the following covariance matrix from the trek rule which lies in the model:
\[
\Sigma=\begin{pmatrix}
    2\zeta & 2\zeta^2 & 2\zeta^2 & 2\zeta^3 \\
    2\zeta^2 & 4\zeta^3+2\zeta & 4\zeta^3 & 6\zeta^4 \\
    2\zeta^2 & 4\zeta^3 & 4\zeta^3+2\zeta & 6\zeta^4+2\zeta^2 \\
    2\zeta^3 & 6\zeta^4 & 6\zeta^4+2\zeta^2 & 12\zeta^5+4\zeta^3+2\zeta 
\end{pmatrix}.
\]
Observe that the submatrix $\Sigma_{\{1,2,3\},\{2,3,4\}}$ has determinant $-8\zeta^6$, which is non-zero for $\zeta\not=0$, implying that $2 \not \indep 4 \mid \{1,3\}$ in the model $\cm_\ct$.
\end{example}

We now introduce a technique to extend the previous proposition to all conditioning sets $K \subseteq V(\ct) \setminus \{i, j\}$.  In Lyapunov models with fixed diffusion matrix $C=2 \Id_V$, we cannot simply pass to marginal distributions as, in 
general, removing a row and corresponding column in a covariance matrix $\Sigma\in\cm_\ct$ does not yield
a distribution in the Lyapunov model over the shortened trek in which the removed node is skipped.
Instead, we provide a construction in which the random variables indexed by nodes in $V(\ct) \setminus K$ become perfectly correlated with random variables indexed by elements of $K$.  This construction then allows us to argue that the counterexample from \cref{Prop:TrekDependence} can be `lifted' to counterexamples on longer treks between $i$ and $j$ that contain additional nodes not in the conditioning set $K$.



To introduce the construction, we recall the following notation. 
For a node $k$ for which $\pa(k)$ is a singleton set, we identify $\pa(k)$ with its single element, i.e., the unique node $j$ with $j\to k$. Similarly,  we identify $\ch(k)$ with the unique node $j$ such that $k\to j$.  Moreover, for $k \in \nn$, we define a map that shifts indices as
\begin{equation}
  \label{eq:index-shift-map}
    \psi_{-k}: 
    \nn \setminus \{k\} \to \nn, \quad 
    x \mapsto \begin{cases}
        x &\text{if } x < k; \\
        x-1 &\text{if } x > k.
    \end{cases} 
\end{equation}

Now, consider a trek $\ct$ between $i$ and $j$ with top node $t$ and of length $|V(\ct)|\ge 4$. If $|\ch(t)| = 2$, let $c_l$ and $c_r$ be the child nodes on the left and on the right side of the trek, respectively. Let $k \in V(\ct)\setminus\{i,j\}$ be a node on the trek, and consider the subset of nodes $V_{-k} = V(\ct)\setminus \{k\}$ obtained by removing $k$.  Define a shorter trek 
\begin{equation*}
    \ct_{-k} = (V_{-k},E_{-k})
\end{equation*}
by constructing the edge set $E_{-k}$ as follows.
If $k\not=t$,  replace the 2-path $\pa(k) \to k \to \ch(k)$ by the single edge $\pa(k)\to \ch(k)$. If $k=t$, the trek $\ct$ has two non-trivial sides as $k\notin\{i,j\}$.
In this case, if the right side of $\ct$ has length 1, we replace $c_l \leftarrow k \to c_r$ by $c_l\to c_r$. If $k = t$ and the right side consists of more than one edge, we replace $c_l \leftarrow k \to c_r$ by $c_l\leftarrow c_r$. The construction is depicted in \Cref{fig:ConstructionTMinusK}.
In all cases, the resulting graph is shorter by one node, as the $k$-th node is removed. Note that a similar definition is possible by adding the reverse edge $c_l \to c_r$ in the case where $k = t$ and both sides of the trek consist of more than one edge. The distinction between the second and third case is necessary to keep the trek structure when removing a top node from a trek where one side consists of exactly one edge. Otherwise, the resulting graph could be a directed path, e.g., $\ct_{-k}$ could be $i \ot c_l \ot c_r$ in \Cref{fig:ConstructionTMinusK:b}. 
In the case that the nodes $V(\ct) = [\nrnodes]$ are numbered consecutively from left to right, we implicitly apply $\psi_{-k}$ to the elements of $V_{-k}$ and the start- and endpoint of each edge in $E_{-k}$ so that we have $V_{-k} = [\nrnodes-1]$ and $E_{-k} \subseteq [\nrnodes-1] \times [\nrnodes-1]$.

\begin{figure}
    \newcommand{\scalefactor}{0.8}
    \centering
    \begin{subfigure}[t]{0.3\textwidth}
        \centering
        \scalebox{\scalefactor}{%
        \begin{tikzpicture}[minimum size = 1cm]
        \node [draw, circle] (i) {\strut$i$};
        \node [above right = 1 of i, draw, circle, blue] (k) {\strut$k$};
        \node [above right = 1 of k, draw, circle] (t) {\strut$t$};
        \node [below right = 1 of t, draw, circle] (j) {\strut$j$};
        \node [scale = 1/\scalefactor] (T) at (-0.5,2.5) {\strut$\ct$};
        \node [below right = 0.5 and 1.7 of i, draw, circle] (tm) {\strut$t$};
        \node [below left = 1 of tm, draw, circle] (im) {\strut$i$};
        \node [below right = 1 of tm, draw, circle] (jm) {\strut$j$};
        \node [scale = 1/\scalefactor] (Tminus) at (-0.5,-1.5) {\strut$\ct_{-k}$};
        
        \draw [->, blue] (k) -- (i);
        \draw [->, blue, dashed] (t) -- (k);
        \draw [->] (t) -- (j);
        \draw [->, blue] (tm) -- (im);
        \draw [->] (tm) -- (jm);
        \end{tikzpicture}}
    \caption{Case $t \neq k$ with $\ch(k) = \{i\}$ and $\pa(k) = \{t\}$.}
    \label{fig:ConstructionTMinusK:a}
    \end{subfigure}%
    \hfill
    \begin{subfigure}[t]{0.3\textwidth}
        \centering
        \scalebox{\scalefactor}{%
        \begin{tikzpicture}[minimum size = 1cm]
        \node [draw, circle] (i) {\strut$i$};
        \node [above right = 1 of i, draw, circle] (cl) {\strut$c_l$};
        \node [above right = 1 of cl, draw, circle, blue] (k) {\strut$k$};
        \node [below right = 1 of k, draw, circle] (cr) {\strut$c_r$};
        \node [below right = 0.5 and 1.7 of i, draw, circle] (clm) {\strut$c_l$};
        \node [below left = 1 of clm, draw, circle] (im) {\strut$i$};
        \node [below right = 1 of clm, draw, circle] (crm) {\strut$c_r$};
        
        \draw [->, blue, dashed] (k) -- (cl);
        \draw [->] (cl) -- (i);
        \draw [->, blue] (k) -- (cr);
        \draw [->] (clm) -- (im);
        \draw [->, blue] (clm) -- (crm);
        \end{tikzpicture}}
    \caption{Case $t = k$ with right side exactly length 1.}
    \label{fig:ConstructionTMinusK:b}
    \end{subfigure}%
    \hfill
    \begin{subfigure}[t]{0.3\textwidth}
        \centering
        \scalebox{\scalefactor}{%
        \begin{tikzpicture}[minimum size = 1cm]
        \node [draw, circle] (cl) {\strut$c_l$};
        \node [above right = 1 of cl, draw, circle, blue] (k) {\strut$k$};
        \node [below right = 1 of k, draw, circle] (cr) {\strut$c_r$};
        \node [below right = 1 of cr, draw, circle] (j) {\strut$j$};
        \node [below left = 0.5 and 1.7 of j, draw, circle] (crm) {\strut$c_r$};
        \node [below left = 1 of crm, draw, circle] (clm) {\strut$c_l$};
        \node [below right = 1 of crm, draw, circle] (jm) {\strut$j$};
        
        \draw [->, blue] (k) -- (cl);
        \draw [->, blue, dashed] (k) -- (cr);
        \draw [->] (cr) -- (j);
        \draw [->, blue] (crm) -- (clm);
        \draw [->] (crm) -- (jm);
        \end{tikzpicture}}
    \caption{Case $t = k$ with ride side longer than length 1.}
    \label{fig:ConstructionTMinusK:c}
    \end{subfigure}
    \caption{Construction of $\ct_{-k}$ depending on the position of the node $k$ in $\ct$. The node $k$ and its in- and outgoing edges (blue in $\ct$) are replaced by a single edge oriented accordingly (blue in $\ct_{-k}$). The dashed edge indicates the two nodes that will be perfectly correlated in the following construction of a distribution in $\cm_{\ct}$ given a distribution in $\cm_{\ct_{-k}}$.}
    \label{fig:ConstructionTMinusK}
\end{figure}
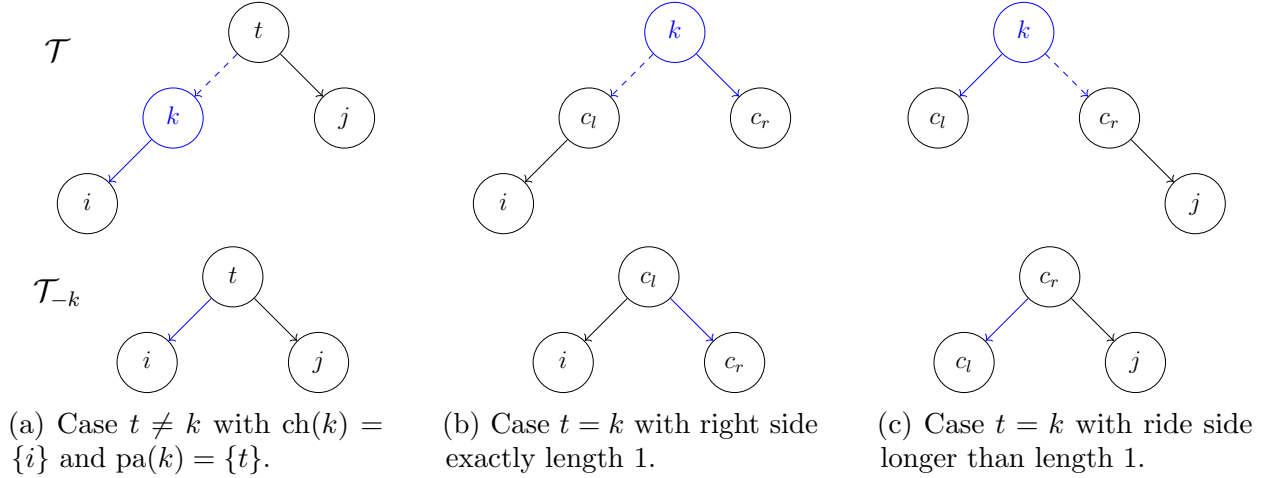

Given a matrix $\Sigma$, we write 
\[
    \Sigma_{-k,-k} = \Sigma_{[\nrnodes]\setminus \{k\},[\nrnodes]\setminus\{k\}}
\]
for the submatrix obtained by removing the $k$\textsuperscript{th} row and column of $\Sigma$.
Given a set of matrices $\cm \subseteq \rr^{\nrnodes \times \nrnodes}$, we denote its topological closure by $\overline{\cm}$.

\begin{lemma} \label{lemma:PerfectCorrelationMatrix}
    Let $\ct$ be a trek between $i$ and $j$ containing more than 3 nodes.  Let $t$ be the top node of $\ct$, and let $k \notin \{i,t,j\}$ be a further node on $\ct$.  Then for every matrix $\Sigma^* \in \cm_{\ct_{-k}}$, there exists a matrix $\Sigma \in \overline{\cm_{\ct}}$ such that 
    \begin{equation*}
        \Sigma_{-k,-k} = \Sigma_{-\pa(k),-\pa(k)} = \Sigma^*.
    \end{equation*}
\end{lemma}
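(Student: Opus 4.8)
The plan is to obtain $\Sigma$ as a limit of covariances in $\cm_\ct$ in which node $k$ is increasingly tightly \emph{slaved} to its parent $p \defas \pa(k)$, so that in the limit $X_k = X_p$ almost surely. Since $k \notin \{i,t,j\}$, node $k$ is an internal, non-top vertex of the trek and therefore has a single parent $p$ and a single child $c \defas \ch(k)$; recall that $\ct_{-k}$ contracts the two-path $p \to k \to c$ to a single edge $p \to c$, whose weight in a chosen stable realization $M^* \in \rr^{E(\ct_{-k})}$ of $\Sigma^*$ (so that $M^*\Sigma^* + \Sigma^*(M^*)^T + 2\Id = 0$) I will call $w$. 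For $\nu > 0$ I would define a drift $M_\nu \in \rr^{E(\ct)}$ on the full trek by copying all entries of $M^*$ on $V \setminus \{k\}$, \emph{except} that the contracted edge $p \to c$ is re-expanded into $k \to c$ with the same weight $w$, together with the new entries
\[
  m_{kk} = -\nu, \qquad m_{k,p} = \nu + m^*_{pp}.
\]
Because $\ct$ is a DAG, in a topological order $M_\nu$ is lower triangular with diagonal equal to the (negative) self-loop weights; hence $M_\nu$ is stable for every $\nu > 0$ and yields a covariance matrix $\Sigma_\nu \in \cm_\ct$.

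Next I would pass to the coordinates $\tilde X = T X$, where $T = \Id - e_k e_p^{T}$ replaces $X_k$ by the deviation $Y_k \defas X_k - X_p$ and fixes all other coordinates. The transformed covariance $\tilde\Sigma_\nu = T\Sigma_\nu T^{T}$ solves the Lyapunov equation for $\tilde M_\nu = T M_\nu T^{-1}$ and $\tilde C = 2\,TT^{T}$, and a direct computation shows that $\tilde M_\nu$ takes the two-timescale block form
\[
  \tilde M_\nu = \begin{pmatrix} M^* & w\,e_c \\ C & -\nu \end{pmatrix},
\]
with slow block on $V \setminus \{k\}$ equal to $M^*$ \emph{exactly}: substituting $X_k = X_p + Y_k$ in the drift of $X_c$ returns precisely the edge $p \to c$ of weight $w$, while the choice $m_{k,p} = \nu + m^*_{pp}$ is arranged so that the fast row $C$ is bounded independently of $\nu$. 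Writing $\tilde\Sigma_\nu = \left(\begin{smallmatrix} S_\nu & u_\nu \\ u_\nu^{T} & \tau_\nu\end{smallmatrix}\right)$ and splitting the Lyapunov equation into its slow--slow, slow--fast and fast--fast blocks, the latter two yield $u_\nu = O(1/\nu)$ and $\tau_\nu = O(1/\nu)$, while the slow--slow block reads $M^* S_\nu + S_\nu (M^*)^{T} + 2\Id = -\bigl(w\,e_c u_\nu^{T} + u_\nu\, w\,e_c^{T}\bigr)$. Letting $\nu \to \infty$ and using that the Lyapunov operator $S \mapsto M^* S + S (M^*)^{T}$ is invertible for the stable matrix $M^*$, I obtain $u_\nu \to 0$, $\tau_\nu \to 0$, and $S_\nu \to \Sigma^*$.

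Transforming back, $\Sigma \defas \lim_{\nu\to\infty}\Sigma_\nu = T^{-1}\left(\begin{smallmatrix}\Sigma^* & 0 \\ 0 & 0\end{smallmatrix}\right)T^{-T}$ is exactly the matrix obtained from $\Sigma^*$ by duplicating the row and column of $p$ into the new index $k$, i.e.\ $\sigma_{kx} = \sigma^*_{px}$ for $x \neq k$ and $\sigma_{kk} = \sigma_{kp} = \sigma^*_{pp}$; equivalently, the limiting law satisfies $X_k = X_p$ almost surely. By construction $\Sigma \in \overline{\cm_\ct}$, and since the $k$-row/column is a perfect copy of the $p$-row/column, deleting index $k$ returns $\Sigma^*$ while deleting index $p = \pa(k)$ returns the same matrix with $k$ in the role of the merged node; thus $\Sigma_{-k,-k} = \Sigma_{-\pa(k),-\pa(k)} = \Sigma^*$, as required. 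I expect the one genuinely delicate point to be the singular-perturbation limit of the second paragraph: one must first establish uniform-in-$\nu$ bounds on $S_\nu$ and $\tau_\nu$ — for instance by a short bootstrap combining the positive semidefiniteness of $\tilde\Sigma_\nu$ with the slow--slow block equation — before the $O(1/\nu)$ estimates on $u_\nu$ and $\tau_\nu$, and hence the convergence $S_\nu \to \Sigma^*$, can be rigorously justified.
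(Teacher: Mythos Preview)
Your proposal is correct and rests on the same idea as the paper's proof: construct a sequence of stable drifts $M_\nu \in \rr^{E(\ct)}$ with $m_{kk}\to -\infty$ and $m_{k,p}\to +\infty$ at matching rates, so that in the limit the $k$-th and $p$-th rows and columns of $\Sigma_\nu$ coincide. The paper even uses almost the same parameters (it takes $m_{k,p}=m$ rather than your $m+m^*_{pp}$, an inessential shift). Where you differ is in the convergence analysis. You conjugate by $T=\Id-e_ke_p^{T}$ to expose a clean two-timescale block structure $\tilde M_\nu=\left(\begin{smallmatrix}M^* & \ast\\ \ast & -\nu\end{smallmatrix}\right)$ and then argue by singular perturbation; the paper instead partitions $V$ into $U=\{1,\dots,k-1\}$, $\{k\}$, $W=\{k+1,\dots,n\}$ and solves the six resulting block Lyapunov equations recursively, using a small auxiliary lemma (equivalent to your observation $(M^*-\nu\Id)^{-1}\to 0$) for the mixed blocks. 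Your change of variables is more conceptual---it makes transparent why the slow block converges to exactly $\Sigma^*$---whereas the paper's block recursion is more explicit and, because the blocks can be handled in a fixed triangular order (first $\Sigma_U$, then $\Sigma_{U,k}$, etc.), it sidesteps the uniform-bound bootstrap you correctly flag as the delicate point in your argument.
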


\begin{proof}
    Let $\nrnodes \geq 4$ be the number of nodes on the trek $\ct$.
    Without loss of generality, assume that the nodes on $\ct$ are numbered consecutively from left to right, i.e., $i = 1$, $j = \nrnodes$, and $k \in \{2,\dots,\nrnodes-1\}\setminus \{t\}$. 

    From $\Sigma^* \in \cm_{\ct_{-k}}$, we are to construct a suitable
    larger covariance matrix $\Sigma\in\pd_\nrnodes$.  To do this,
    we make the
    new node at index $k$ a copy of its parent on the
    trek by duplicating the respective row and column from $\Sigma^*$. In the resulting distribution,
    $k$ and $\pa(k)$ are perfectly correlated. 
    Note that $\pa(k) = \{k+1\}$ for $k < t$, and $\pa(k)=\{k-1\}$ for $k > t$. Thus, in the first case, the $k$\textsuperscript{th} and $k+1$\textsuperscript{st} row and column of $\Sigma$ are duplicates of each other, while in the second case, the $k$\textsuperscript{th} and $k-1$\textsuperscript{st} row and column of $\Sigma$ are duplicates of each other. The proof in both cases is analogous as the drift matrix of a single trek is defined in a symmetric way around its top node as seen in \eqref{eq:trek-drift-matrix}. We only give the proof for the second case, so assume $k > t$.
    Furthermore, let $U:=\{1,\dots,k-1\}$,
    $W:=\{k+1,\dots,\nrnodes\}$, and write $\psi \equiv \psi_{-k}$ for
    the map from~\eqref{eq:index-shift-map}. 
    Then, $\psi(U) = U$ and $\psi(W) = \{k, \dots, \nrnodes-1\}$.
    We note that the case of a directed path, i.e., a trek with
    $i=t=1$, is included here as $k > 1$ by assumption.  

    Given $\Sigma^* \in \cm_{\ct_{-k}}$, there exists a stable matrix $M^* \in \rr^{(\nrnodes-1)\times(\nrnodes-1)}$ such that the Lyapunov equation 
    \begin{equation} \label{eq:lyapunov_M_star}
        M^* \Sigma^* + \Sigma^* (M^*)^T + 2\Id_{\nrnodes-1} = 0
    \end{equation}
    is fulfilled.
    In the following, we show that our construction of inserting a
    duplicate of the row and column of $\Sigma^*$ at index $\pa(k)$ in
    $\ct$ indeed yields a matrix $\Sigma \in \overline{\cm_{\ct}}$.


    To achieve this, we construct a sequence of drift matrices $M^{(m)}$ as follows. First, partition
    \[
    \Sigma^* = 
    \left(\begin{array}{@{}c | c@{}} 
        \multirow{2}{*}{$\Sigma^*_{U}$} & \multirow{2}{*}{$\Sigma^*_{U,\psi(W)}$} \\
        & \\
        \hline
        \multirow{2}{*}{$\Sigma^*_{\psi(W),U}$} & \multirow{2}{*}{$\Sigma^*_{\psi(W)}$} \\
        & \\
    \end{array}\right)
    \]
    as well as 
    \[
    M^* = 
    \left(\begin{array}{@{}c | c@{}} 
        \multirow{2}{*}{$M^*_{U}$} & \multirow{2}{*}{$\mathbf{0}$} \\
        & \\
        \hline
        \multirow{2}{*}{$M^*_{\psi(W),U}$} & \multirow{2}{*}{$M^*_{\psi(W)}$} \\
        & \\
    \end{array}\right),
    \quad \text{where} \quad 
    M^*_{\psi(W),U} = 
    \left(\begin{array}{@{}c c@{}} 
        \begin{matrix}
            0 & \cdots & 0 \\
        \end{matrix} & \begin{matrix}
            m^*_{k,k-1} \\
        \end{matrix} \\
        \mathbf{0} & 
        \begin{matrix}
            0 \\ \vdots \\ 0 \\
        \end{matrix} \\
    \end{array}\right). 
    \]
    Considering \eqref{eq:lyapunov_M_star} block-wise results in the following four equations:
    \begin{enumerate}[itemsep=0.6em, label=(\alph*)]
        \item $M^*_{U} \Sigma^*_{U} + \Sigma^*_{U} \tp[-0]{M^*_{U}} + 2\Id_{k-1} = 0$, \label{eq:M_star:a}
        \item $M^*_{U} \Sigma^*_{U,\psi(W)} + \Sigma^*_{U,\psi(W)} \tp{M^*_{\psi(W)}} + \Sigma^*_{U} \tp{M^*_{\psi(W),U}} = 0$, \label{eq:M_star:b}
        \item $M^*_{\psi(W)} \Sigma^*_{\psi(W)} + \Sigma^*_{\psi(W)} \tp{M^*_{\psi(W)}} + 2I_{\nrnodes-k} + M^*_{\psi(W),U}\Sigma^*_{U,\psi(W)} + \Sigma^*_{\psi(W),U} \tp{M^*_{\psi(W),U}} = 0$, \label{eq:M_star_c}
        \item $M^*_{\psi(W)} \Sigma^*_{\psi(W),U} + \Sigma^*_{\psi(W),U} \tp[-0]{M^*_{U}} + M^*_{\psi(W),U} \Sigma^*_{U} = 0$, \label{eq:M_star:d}
    \end{enumerate}
    where equations \ref{eq:M_star:b} and \ref{eq:M_star:d} are transposes of each other.
    Given the stable matrix $M^*$, these equations are uniquely fulfilled by the corresponding submatrices of $\Sigma^*$.
    
    Now, let $m \in \nn_{>0}$ and define
    \[\renewcommand*{\arraystretch}{1.5}
    M^{(m)} = 
    \left(\begin{array}{@{}c c|c|c c@{}}
        \multicolumn{2}{c|}{\multirow{2}{*}{$M^*_{U}$}} & \multirow{2}{*}{$\mathbf{0}$} &  \multicolumn{2}{c}{\multirow{2}{*}{$\mathbf{0}$}} \\
        \multicolumn{2}{c|}{} &  & \multicolumn{2}{c}{} \\
        \hline
        \mathbf{0} & m & -m & \multicolumn{2}{c}{\mathbf{0}} \\
        \hline
        \multicolumn{2}{c|}{\multirow{2}{*}{$\mathbf{0}$}}  & m^*_{k,k-1} & \multicolumn{2}{c}{\multirow{2}{*}{$M^*_{\psi(W)}$}} \\
        \multicolumn{2}{c|}{} & \mathbf{0} & \multicolumn{2}{c}{} \\
    \end{array}\right) \in \rr^{\nrnodes\times \nrnodes}.
    \]
    Further, let
    \[\renewcommand*{\arraystretch}{1.5}
    \Sigma^{(m)} = 
    \left(\begin{array}{c c|c|c c}
        \multicolumn{2}{c|}{\multirow{2}{*}{$\Sigma^{(m)}_{U}$}} & \multirow{2}{*}{$\Sigma^{(m)}_{U,k}$} &  \multicolumn{2}{c}{\multirow{2}{*}{$\Sigma^{(m)}_{U,W}$}} \\
        \multicolumn{2}{c|}{} &  & \multicolumn{2}{c}{} \\
        \hline
        \multicolumn{2}{c|}{\Sigma^{(m)}_{k,U}} & \Sigma^{(m)}_{k,k} & \multicolumn{2}{c}{\Sigma^{(m)}_{k,W}} \\[1mm]
        \hline
        \multicolumn{2}{c|}{\multirow{2}{*}{$\Sigma^{(m)}_{W,U}$}}  & \multirow{2}{*}{$\Sigma^{(m)}_{W,k}$} & \multicolumn{2}{c}{\multirow{2}{*}{$\Sigma^{(m)}_{W}$}} \\
        \multicolumn{2}{c|}{} & & \multicolumn{2}{c}{} \\
    \end{array}\right) \in \rr^{\nrnodes \times \nrnodes}
    \]
    be the solution of the resulting Lyapunov equation 
    \begin{equation} \label{eq:LyapunovMm}
        M^{(m)} \Sigma^{(m)} + \Sigma^{(m)} (M^{(m)})^T + 2\Id_\nrnodes = 0
    \end{equation}
    induced by $M^{(m)}$ for every $m \in \nn_{>0}$. 
    
    Due to the symmetry of the Lyapunov equation, any unique solution of \eqref{eq:LyapunovMm} is symmetric. Since $M^{(m)}$ is stable, the solution $\Sigma^{(m)}$ is unique and therefore symmetric, so we only have to solve six matrix equations that arise by block-matrix multiplication, denoted by letters in brackets as follows:
    \begin{equation*} 
        \renewcommand*{\arraystretch}{1.5}
        M^{(m)} \Sigma^{(m)} + \Sigma^{(m)} (M^{(m)})^T + 2\Id_\nrnodes = 0 \equiv 
        \left(\begin{array}{c c|c|c c}
            \multicolumn{2}{c|}{\multirow{2}{*}{\text{(A)}}} & \multirow{2}{*}{\text{(B)}} &  \multicolumn{2}{c}{\multirow{2}{*}{\text{(C)}}} \\
            \multicolumn{2}{c|}{} &  & \multicolumn{2}{c}{} \\
            \hline
            \multicolumn{2}{c|}{} & \text{(D)} & \multicolumn{2}{c}{\text{(E)}} \\[1mm]
            \hline
            \multicolumn{2}{c|}{\multirow{2}{*}{}}  & \multirow{2}{*}{} & \multicolumn{2}{c}{\multirow{2}{*}{\text{(F)}}} \\
            \multicolumn{2}{c|}{} & & \multicolumn{2}{c}{} \\
        \end{array}\right).
    \end{equation*}

    The upper left block $\Sigma^{(m)}_U$ of the partitioned matrix
    $\Sigma^{(m)}$ is constant as it is the unique solution of
    equation \ref{eq:M_star:a}. Proceeding recursively, the other
    blocks of $\Sigma^{(m)}$ can be shown to converge for $m \to
    \infty$ as well, with limits given by the relevant parts of $\Sigma^*$. The details of these calculations can be found in \Cref{appendix:PerfectCorrelation}.
    Combining the results, we have
    \[\renewcommand*{\arraystretch}{1.5}
    \lim_{m \to \infty} \Sigma^{(m)} = 
    \left(\begin{array}{c c|c|c c}
        \multicolumn{2}{c|}{\multirow{2}{*}{$\Sigma^*_{U}$}} & \multirow{2}{*}{$\Sigma^*_{U,\pa(k)}$} &  \multicolumn{2}{c}{\multirow{2}{*}{$\Sigma^*_{U,\psi(W)}$}} \\
        \multicolumn{2}{c|}{} &  & \multicolumn{2}{c}{} \\
        \hline
        \multicolumn{2}{c|}{\Sigma^*_{\pa(k),U}} & \Sigma^*_{\pa(k),\pa(k)} & \multicolumn{2}{c}{\Sigma^*_{\pa(k),\psi(W)}} \\[1.5mm]
        \hline
        \multicolumn{2}{c|}{\multirow{2}{*}{$\Sigma^*_{\psi(W),U}$}}  & \multirow{2}{*}{$\Sigma^*_{\psi(W),\pa(k)}$} & \multicolumn{2}{c}{\multirow{2}{*}{$\Sigma^*_{\psi(W)}$}} \\
        \multicolumn{2}{c|}{} & & \multicolumn{2}{c}{} \\
    \end{array}\right) \in \overline{\cm_{\ct}},
    \]
as desired.
\end{proof}

\begin{corollary} \label{cor:PerfectCorrelationDeterminant}
    Let $\ct$ be a trek with endpoints $i$ and $j$. Let $\emptyset \neq K \subsetneq V(\ct)\setminus\{i,j\}$ and let $k \notin \{i,j\} \cup K$ be a node on the trek. If there is 
    \[
        \Sigma^* \in \cm_{\ct_{-k}} \text{ such that } | \Sigma^*_{\psi_{-k}(iK),\psi_{-k}(jK)}| \neq 0,
    \]
    then there is 
    \[
        \Sigma \in \cm_{\ct} \text{ such that } |\Sigma_{iK,jK}| \neq 0.
    \]
\end{corollary}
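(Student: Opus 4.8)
The plan is to deduce this statement directly from \Cref{lemma:PerfectCorrelationMatrix} together with a short continuity argument. First I would apply the lemma to the given $\Sigma^* \in \cm_{\ct_{-k}}$, obtaining a matrix $\Sigma \in \overline{\cm_\ct}$ with $\Sigma_{-k,-k} = \Sigma^*$ under the index shift $\psi_{-k}$.

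The crucial structural observation is that the node $k$ lies outside both index sets $iK$ and $jK$, since $k \notin \{i,j\} \cup K$ by hypothesis. Consequently the submatrix $\Sigma_{iK,jK}$ never references the $k$\textsuperscript{th} row or column of $\Sigma$, so it is a submatrix of $\Sigma_{-k,-k}$. Because $\psi_{-k}$ is order-preserving and is applied identically to the rows $iK$ and the columns $jK$, this relabeling preserves the determinant, yielding $|\Sigma_{iK,jK}| = |\Sigma^*_{\psi_{-k}(iK),\psi_{-k}(jK)}|$, which is nonzero by hypothesis.

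The main obstacle is that the lemma only places $\Sigma$ in the closure $\overline{\cm_\ct}$, not in $\cm_\ct$ itself; indeed, since the rows indexed by $k$ and $\pa(k)$ coincide in $\Sigma$, this limiting matrix is singular and hence fails to be positive definite. To overcome this, I would transfer the nonvanishing determinant back to the model along an approximating sequence. By definition of the closure there is a sequence $\Sigma^{(m)} \in \cm_\ct$ with $\Sigma^{(m)} \to \Sigma$, and the map $\Sigma' \mapsto |\Sigma'_{iK,jK}|$ is polynomial, hence continuous, so that $|\Sigma^{(m)}_{iK,jK}| \to |\Sigma_{iK,jK}| \neq 0$. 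As nonvanishing is an open condition, some finite $m$ satisfies $|\Sigma^{(m)}_{iK,jK}| \neq 0$, and this $\Sigma^{(m)} \in \cm_\ct$ is the matrix asserted by the corollary.
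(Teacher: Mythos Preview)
Your proposal is correct and follows essentially the same approach as the paper's proof: apply \Cref{lemma:PerfectCorrelationMatrix} to obtain a matrix in the closure $\overline{\cm_\ct}$ whose relevant submatrix equals $\Sigma^*_{\psi_{-k}(iK),\psi_{-k}(jK)}$, and then use continuity of the determinant to transfer the nonvanishing back into $\cm_\ct$. Your write-up is slightly more explicit than the paper's (e.g., you spell out why the limiting matrix lies only in the closure and why the index shift preserves the determinant), but the argument is the same.
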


\begin{proof}
    By continuity of the determinant, it suffices to construct a matrix $\widehat{\Sigma} \in \overline{\cm_{\ct}}$ such that $ | \widehat{\Sigma}_{iK,jK}| \neq 0$.  Considering the closure of the model is crucial as it allows us to apply \cref{lemma:PerfectCorrelationMatrix} for the construction of $\widehat{\Sigma}$.   Then as $k \notin \{i,j\} \cup K$, we find that 
    \[
        | \widehat{\Sigma}_{iK,jK}| 
        = | \Sigma^*_{\psi_{-k}(iK),\psi_{-k}(jK)} |
         \neq 0. \qedhere
    \]
\end{proof}

\subsection{Conditional Independence in Zig-Zag Graphs}
\label{sec:sub-CI-zig-zag}

We now introduce the family of graphs that will be the key element needed to complete the proof of \cref{thm:CIImpliedByMarginals}. 

\begin{definition}
Let $\ct_1, \ldots, \ct_p$ be treks such that for $i \in [p-1]$, there exists a node $k_i$ on the right branch of $\ct_i$ which is the endpoint of the left branch of $\ct_{i+1}$. A $\emph{zig-zag}$ graph $\cz$ (\Cref{fig:ZigZagGraph}) is a directed acyclic graph obtained by performing an iterated 1-clique sum of these treks along the nodes $k_i$, i.e.
\[
\cz = \ct_1 \oplus_{k_1} \ct_{2} \oplus_{k_2} \cdots \ct_{p-1} \oplus_{k_{p-1}} \ct_{p}.
\]

\end{definition}

\begin{figure}
\centering
\begin{subfigure}{1\textwidth}
\centering%
\begin{tikzpicture}
\node [draw, circle] (i) {\strut$i$};
\node [above right = 1 of i, draw, circle, radius = .2cm] (t1) {\strut$t_1$};
\node [below right = 1 of t1, draw, circle] (k1) {\strut$k_1$};
\node [below = 1 of k1, draw, circle] (l1) {\strut$s_1$};

\node [right = 1 of k1] (sum) {\strut$\bigoplus_{k_1}$};

\node [right = 1 of sum, draw, circle] (k1') {\strut$k_1$};
\node [right = 6 of t1, draw, circle] (t2) {\strut$t_2$};
\node [below right = 1 of t2, draw, circle] (k2) {\strut$k_2$};
\node [below = 1 of k2, draw, circle] (l2) {\strut$s_2$};

\node [right = 2 of t2] (dots) {\strut$\cdots$};

\node [right = 2 of dots, draw, circle] (tn) {\strut$t_p$};
\node [below left = 1 of tn, draw, inner sep = .025cm, circle] (kn) {\strut$k_{p-1}$};
\node [below = 1 of kn, draw, inner sep = .025cm, circle] (ln) {\strut$s_{p-1}$};
\node [below right = 1 of tn, draw, circle] (j) {\strut$j$};

\draw [->] (t1) -- (i);
\draw [->] (t1) -- (k1);
\draw [->] (k1) -- (l1);

\draw [->] (t2) -- (k1');
\draw [->] (t2) -- (k2);
\draw [->] (k2) -- (l2);

\draw [->] (dots) -- (k2);
\draw [->] (dots) -- (kn);

\draw [->] (tn) -- (kn);
\draw [->] (tn) -- (j);
\draw [->] (kn) -- (ln);

\end{tikzpicture}
\caption{Iterative construction of $p$-trek zig-zag graph obtained via a 1-clique sum of a trek and $p-1$-trek zig-zag. }
\label{fig:ZigZagClique}
\end{subfigure}%

\begin{subfigure}{1\textwidth}
\centering%
\begin{tikzpicture}
\node [draw, circle] (i) {\strut$i$};
\node [above right = 1 of i, draw, circle, radius = .2cm] (t1) {\strut$t_1$};
\node [below right = 1 of t1, draw, circle] (k1) {\strut$k_1$};
\node [below = 1 of k1, draw, circle] (l1) {\strut$s_1$};

\node [right = 2 of t1, draw, circle] (t2) {\strut$t_2$};
\node [below right = 1 of t2, draw, circle] (k2) {\strut$k_2$};
\node [below = 1 of k2, draw, circle] (l2) {\strut$s_2$};

\node [right = 2 of t2] (dots) {\strut$\cdots$};

\node [right = 2 of dots, draw, circle] (tn) {\strut$t_p$};
\node [below left = 1 of tn, draw, inner sep = .025cm, circle] (kn) {\strut$k_{p-1}$};
\node [below = 1 of kn, draw, inner sep = .025cm, circle] (ln) {\strut$s_{p-1}$};
\node [below right = 1 of tn, draw, circle] (j) {\strut$j$};

\draw [->] (t1) -- (i);
\draw [->] (t1) -- (k1);
\draw [->] (k1) -- (l1);

\draw [->] (t2) -- (k1);
\draw [->] (t2) -- (k2);
\draw [->] (k2) -- (l2);

\draw [->] (dots) -- (k2);
\draw [->] (dots) -- (kn);

\draw [->] (tn) -- (kn);
\draw [->] (tn) -- (j);
\draw [->] (kn) -- (ln);

\end{tikzpicture}
\caption{The resulting $p$-trek zig-zag graph.}
\label{fig:ZigZagResult}
\end{subfigure}%

\caption{A zig-zag graph $\cz$ with endpoints $i$ and $j$ where each edge in the above graph represents a directed path. It is obtained by taking a 1-clique sum along the nodes $k_\alpha$ of the $p$ treks $T_\alpha$ with top node $t_\alpha$ and endpoints $s_{\alpha}$ and $s_{\alpha+1}$. The last step of this iterative process is pictured in (a) while the result is pictured in (b).}
\label{fig:ZigZagGraph}
\end{figure}
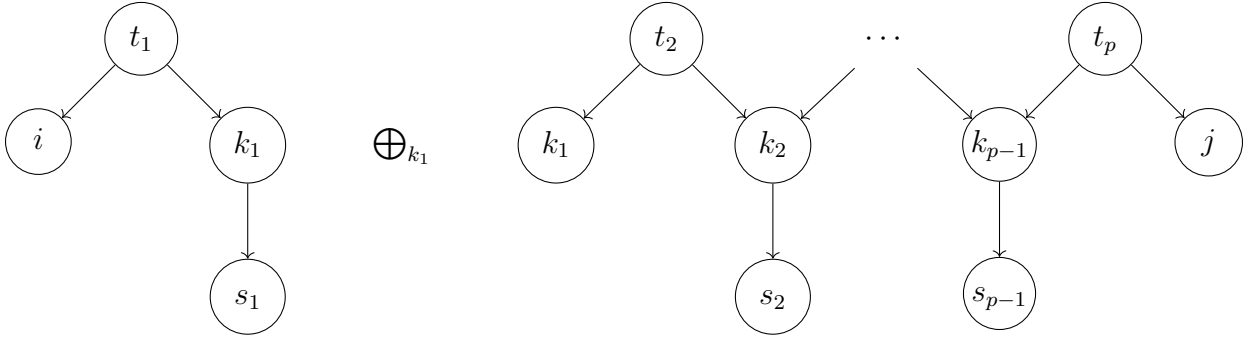
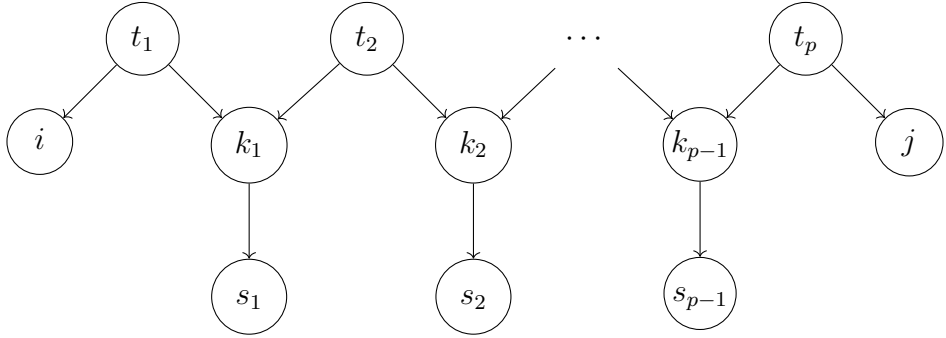

\begin{proposition}
\label{prop:NoCIsOnZigZag}
Let $\cz$ be a zig-zag graph with endpoints $i$ and $j$, and let $K \subseteq V(\cz) \setminus \{i, j\}$ be a set such that for each collider $k \in \cz$ it holds that $s \in K$ where $s$ is the unique sink in $\De(k)$. Then $i \not \indep j \mid K$ in the Lyapunov model associated to $\cz$. 
\end{proposition}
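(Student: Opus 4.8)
The plan is to construct an explicit $\Sigma \in \cm_\cz$ with $\det\Sigma_{iK,jK}\neq 0$, which certifies $i\not\indep j\mid K$ by \Cref{cor:GaussianCI}. The key structural fact is that $i$ and $j$ have no common ancestor in $\cz$: the only ancestors of $i$ lie on its ascending path to the peak $t_1$, and likewise for $j$ and $t_p$. Hence \Cref{thm:MarginalIndependence} (equivalently, the trek rule \Cref{prop:TrekRule}) gives $\sigma_{ij}=0$, and taking the Schur complement of the principal block $\Sigma_{K,K}$ yields
\[
  \det\Sigma_{iK,jK} = -\,\Sigma_{i,K}\,\adj(\Sigma_{K,K})\,\Sigma_{K,j},
\]
so it suffices to make this bilinear form nonzero. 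Conditioning on the sinks is exactly what activates it: because $s_\alpha\in\De(k_\alpha)$, the sinks realize the connecting path $i\leftrightarrow s_1\leftrightarrow\cdots\leftrightarrow s_{p-1}\leftrightarrow j$ in $\hatg$, each internal edge $s_\alpha\leftrightarrow s_{\alpha+1}$ arising from the shared peak $t_{\alpha+1}$.

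First I would reduce to the essential configuration in which every branch has minimal length and $K=\{s_1,\dots,s_{p-1}\}$. Non-conditioned interior nodes of the constituent treks can be collapsed using the perfect-correlation construction of \Cref{lemma:PerfectCorrelationMatrix} and \Cref{cor:PerfectCorrelationDeterminant}, just as in the single-trek case, and \Cref{lemma:Specialization} lets me transport a dependence found on a subgraph back to $\cz$. In the resulting minimal zig-zag each collider is its own sink, and an ancestor computation shows that $\Sigma_{K,K}$ is tridiagonal ($\sigma_{s_\alpha s_\beta}\neq 0$ iff $|\alpha-\beta|\le 1$), that $\Sigma_{i,K}$ is supported only on $s_1$, and that $\Sigma_{K,j}$ is supported only on $s_{p-1}$.

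With this zero pattern the determinant telescopes. Expanding $\Sigma_{iK,jK}$ along the $j$-column (whose sole nonzero entry is $\sigma_{s_{p-1}j}$) and then along the $i$-row (sole nonzero entry $\sigma_{is_1}$) reduces it to the off-diagonal block $\Sigma_{\{s_1,\dots,s_{p-2}\},\{s_2,\dots,s_{p-1}\}}$, which is triangular with diagonal entries $\sigma_{s_\alpha s_{\alpha+1}}$. Hence
\[
  \det\Sigma_{iK,jK} = \pm\,\sigma_{is_1}\,\sigma_{s_{p-1}j}\prod_{\alpha=1}^{p-2}\sigma_{s_\alpha s_{\alpha+1}},
\]
a product of covariances of trek-connected pairs, each a nonzero polynomial in the drift weights by \Cref{prop:TrekRule}. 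Choosing generic weights (say all equal to $1$) with negative diagonal keeps every factor nonzero; since $\cz$ is acyclic the drift matrix is stable, and the Lyapunov solution is automatically positive definite, so the constructed $\Sigma$ indeed lies in $\cm_\cz$.

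The hard part will be the reduction, and in particular the conditioned nodes of $K$ that are not sinks. A non-conditioned node may be collapsed harmlessly, but making a conditioned node redundant by perfect correlation would create a repeated row and force the minor to vanish, so such nodes cannot simply be removed. Handling them seems to require either a version of \Cref{lemma:PerfectCorrelationMatrix} adapted to the multi-trek geometry that preserves the minor $\Sigma_{iK,jK}$, or a direct analysis of the lowest-degree-in-$\zeta$ term of $\Sigma_{i,K}\,\adj(\Sigma_{K,K})\,\Sigma_{K,j}$ via the trek rule, mirroring the inductive adjugate computation used for \Cref{Prop:TrekDependence}. That bookkeeping is where I expect the genuine difficulty to lie.
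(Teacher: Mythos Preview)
Your minimal-case computation is correct, and you have correctly located the obstruction: the reduction step cannot handle elements of $K$ that are not sinks. Neither of your proposed fixes is straightforward. Adapting \Cref{lemma:PerfectCorrelationMatrix} cannot help with a conditioned node $k\in K$, because perfect correlation of $k$ with a neighbour that is also in $K$ forces two equal rows in $\Sigma_{iK,jK}$ (as you note), while if the neighbour is not in $K$ you do not simplify the conditioning set at all; either way you never shrink $K$. The subgraph trick \Cref{lemma:Specialization} is likewise unavailable, since it preserves $K\cap V'$ and therefore cannot discard, say, a peak $t_\alpha\in K$ that must remain in any subgraph supporting the $\alpha$-th trek. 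An inductive adjugate analysis of $\Sigma_{i,K}\,\adj(\Sigma_{K,K})\,\Sigma_{K,j}$ along the lines of \Cref{Prop:BigTrekDependence} would in principle work, but the bookkeeping across multiple peaks and branches is substantially more involved than for a single trek, and this route would essentially amount to reproving the proposition from scratch rather than reducing to the minimal case.

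The paper avoids the whole reduction problem by a different induction: it inducts on the number $p$ of constituent treks rather than on the size of $K$ or of the branches. Writing $\cz_{p+1}=\ct\oplus_k\cz_p$, one partitions $K=T\cup L\cup K'$ according to whether a node lies in the new trek $\ct$, in the tail below $k$, or in $\cz_p$, and arranges $\Sigma_{iK,jK}$ into blocks accordingly. The key device is then not perfect correlation but \emph{specialisation of edge weights to zero}: setting the weights on the tail below the first conditioned tail node $l$ to $0$ disconnects those nodes and collapses the block matrix to upper-triangular form, so that
\[
|\Sigma_{iK,jK}| = |\Sigma_{lK',jK'}|\;|\Sigma_{iT,lT}|\;|\Sigma_{L\setminus l}|.
\]
The first factor is nonzero by the induction hypothesis on $\cz_p$, the second by the single-trek result (\Cref{cor:PerfectCorrelationDeterminant}), and the third is a principal minor. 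This block decomposition accommodates arbitrary $K$ (peaks and interior nodes included) without ever needing to remove conditioned nodes, which is exactly the step your approach lacks.
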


\begin{proof}
To see this we use \Cref{Prop:TrekDependence} inductively on the number $p$ of treks that $\cz$ is obtained from. If $\cz$ is simply a trek then the result already holds by \cref{Prop:TrekDependence}. So suppose that $\cz_p$ is a zig-zag graph with tails obtained from combining $p$ treks and that for any set $K' \subseteq V(\cz_p)$ of the required form it holds that $k \not \indep j \mid K'$ where $k$ and $j$ are the endpoints of $\cz_p$. Let $\ct$ be a trek from $i$ to $s$ containing a node labelled $k$ which is not on the same branch as $i$ and let $\cz_{p+1}$ be the 1-clique sum of $\ct$ and $\cz_{p}$ at $k$. Recall that this is exactly how every zig-zag graph is produced as pictured in \Cref{fig:ZigZagGraph}. Note that if $k$ and $i$ are on the same branch then checking that $i \not \indep j \mid K$ on $\cz_{p+1}$ naturally reduces to checking $i \not \indep j \mid K$ on a smaller zig-zag $\cz_p$ which immediately follows from the induction hypothesis.

To complete the induction we must show that $i \not \indep j \mid K$ where $K$ is of the form $K = T \cup L \cup K'$ such that $T \subseteq V(\ct) \setminus \{i\} \cup \De(k)$, $s \in L \subseteq \De(k)$ and $K' \subseteq \cz_p$ is a set such that $k \not \indep j \mid K'$. Lastly, let $l \in L$ be the node closest to $k$. Then observe that after rearranging the rows and columns of $\Sigma_{iK, jK}$ it has the block form
\[
\Sigma_{iK, jK} = 
\begin{blockarray}{cccc}
                   & jK'                              & l T                          & L \setminus l  \\
\begin{block}{l(ccc)}
l K'           & \Sigma_{lK', j K'}           & \ast                             & \ast  \\ 
i T                & 0                                & \Sigma_{iT, lT}              & \ast  \\
L \setminus l  & \Sigma_{L \setminus l, jK'}  & \Sigma_{L \setminus l, lT}  & \Sigma_{L \setminus l,L \setminus l}  \\
\end{block}
\end{blockarray}\;.
\]
Now observe that if we specialize this matrix by setting $m_{ab} = 0$ and $m_{la} = 0$ for all $a,b \in L \setminus l$ then the above block matrix immediately simplifies to
\[
\Sigma_{iK, jK} = 
\begin{blockarray}{cccc}
                   & jK'                              & l T                          & L \setminus l  \\
\begin{block}{l(ccc)}
l K'           & \Sigma_{lK', j K'}           & \ast                             & \ast  \\ 
i T                & 0                                & \Sigma_{iT, lT}              & \ast  \\
L \setminus l  & 0                                & 0                                & \Sigma_{L \setminus l, L \setminus l} \\
\end{block}
\end{blockarray}\;. 
\]
Thus we see that for any such matrix obtained from the submodel corresponding to this specialization, $|\Sigma_{iK, jK}| = |\Sigma_{lK', j K'}| \,|\Sigma_{iT, lT}| \,|\Sigma_{L \setminus l, L \setminus l}|$. Now observe that $|\Sigma_{lK', j K'}| \neq 0$ by our induction hypothesis since this corresponds to conditional dependence of a zig-zag with endpoints $l$ and $j$ which is composed of only $p$ treks. Further observe that $|\Sigma_{iT, lT}| \neq 0$ by \cref{cor:PerfectCorrelationDeterminant}. The last factor is a principal minor and thus is nonzero for any $\Sigma \in \cm_{\cz_{p+1}}$. So any $\Sigma$ which comes from the above submodel satisfies $|\Sigma_{iK, jK}| \neq 0$ and thus $i \not \indep j \mid K$ for the Lyapunov model $\cm_\cz$. 
\end{proof}


\begin{example}
Consider the zig-zag graph $\cz$ pictured in \Cref{fig:Trek with no CI}. Then $3 \not \indep 5 \mid \{1, 4\}$ since for any $\Sigma \in \cm_\cz$, it holds that
\[
\Sigma_{\{3,1,4\}, \{5, 1, 4\}} = 
\begin{blockarray}{cccc}
    & 5              & 4             & 1 \\
\begin{block}{l(ccc)}
4   & \sigma_{45}    & \sigma_{44}   & \sigma_{14}  \\ 
3   & 0              & \sigma_{34}   & \sigma_{13}  \\
1   & 0              & \sigma_{14}   & \sigma_{11}\\
\end{block}
\end{blockarray}\;. 
\]
As was shown in the previous proof, this matrix is block upper-triangular with blocks $\sigma_{45}$ and $\Sigma_{\{3,1\}, \{4,1\}}$. 
Thus $|\Sigma_{\{3,1,4\}, \{5, 1, 4\}}| = \sigma_{45} |\Sigma_{\{3,1\}, \{4,1\}}|$ where the second term in the product is guaranteed to be nonzero for almost all choices of parameters by \cref{Prop:BigTrekDependence}. 
\end{example}

We now have all the necessary tools needed to complete the proof of \cref{thm:CIImpliedByMarginals}. 

\begin{proof}[Proof of \cref{thm:CIImpliedByMarginals}]
If $V \setminus (\{i ,j \} \cup K)$ separates $i$ and $j$ in the marginal independence graph $\hatg$ then \cref{sec:ImpliedCI} immediately implies that $i \indep j \mid K$ holds for all $\Sigma \in \cm_\cg$. It remains to show that if $i \indep j \mid K$ then $V \setminus (\{i, j\} \cup K)$ separates $i$ in $j$ in $\hatg$. We instead show the contrapositive: if $i$ and $j$ are connected given $V \setminus (\{i, j\} \cup K)$ in $\hatg$, then there exists $\Sigma \in \cm_\cg$ such that $i \not \indep j \mid K$. We will prove this by constructing a specific $\Sigma$ using the trek rule described in \cref{prop:TrekRule} and then showing that $|\Sigma_{iK, jK}| \neq 0$ which by \Cref{cor:GaussianCI} implies that the conditional dependence $i \not\indep  j \mid K$ holds.

\begin{figure}
\newcommand{\scalefactor}{0.7}
\begin{subfigure}[t]{0.5\textwidth}
\centering%
\scalebox{\scalefactor}{%
\begin{tikzpicture}
\node [draw, circle] (t1) {\strut$t_1$};
\node [below left=1 and 0 of t1, draw, circle] (m) {\strut$m$};
\node [below left=1 and 0 of m, draw, circle] (i) {\strut$i$};
\node [right=1 of i, draw, circle] (l) {\strut$l$};
\node [right=1 of l, draw, circle] (t2) {\strut$t_2$};
\node [below=1 of t2, draw, circle] (j) {\strut$j$};
\node [right=1 of t2, draw, circle] (k) {\strut$k$};

\draw [->] (t1) -- (m);
\draw [->, blue] (m) -- (i);
\draw [->] (t1) -- (k);
\draw [->, blue] (t2) -- (m);
\draw [->] (m) -- (l);
\draw [->, blue] (t2) -- (j);
\end{tikzpicture}}
\caption{Left-left}
\label{fig:TrekIntersections:ll}
\end{subfigure}%
~
\begin{subfigure}[t]{0.5\textwidth}
\centering%
\scalebox{\scalefactor}{%
\begin{tikzpicture}
\node [draw, circle] (t1) {\strut$t_1$};
\node [below left=1 and 0 of t1, draw, circle] (m) {\strut$m$};
\node [below left=1 and 0 of m, draw, circle] (i) {\strut$i$};
\node [right=1 of i, draw, circle] (t2) {\strut$t_2$};
\node [below=1 of t2, draw, circle] (l) {\strut$l$};
\node [right=1 of t2, draw, circle] (j) {\strut$j$};
\node [right=1 of j, draw, circle] (k) {\strut$k$};

\draw [->] (t1) -- (m);
\draw [->, blue] (m) -- (i);
\draw [->] (t2) -- (l);
\draw [->] (t2) -- (m);
\draw [->] (t1) -- (k);
\draw [->, blue] (m) -- (j);
\end{tikzpicture}}
\caption{Left-right}
\label{fig:TrekIntersections:lr}
\end{subfigure}
\\[1em]
\begin{subfigure}[t]{0.5\textwidth}
\centering%
\scalebox{\scalefactor}{%
\begin{tikzpicture}
\node [draw, circle] (t1) {\strut$t_1$};
\node [below right=1 and 0 of t1, draw, circle] (m) {\strut$m$};
\node [below right=1 and 0 of m, draw, circle] (k) {\strut$k$};
\node [left=1 of k, draw, circle] (j) {\strut$j$};
\node [left=1 of j, draw, circle] (t2) {\strut$t_2$};
\node [left=1 of t2, draw, circle] (i) {\strut$i$};
\node [below=1 of t2, draw, circle] (l) {\strut$l$};

\draw [->, blue] (t1) -- (i);
\draw [->, blue] (t1) -- (m);
\draw [->] (m) -- (k);
\draw [->] (t2) -- (m);
\draw [->, blue] (m) -- (j);
\draw [->] (t2) -- (l);
\end{tikzpicture}}
\caption{Right-right}
\label{fig:TrekIntersections:rr}
\end{subfigure}%
~
\begin{subfigure}[t]{0.5\textwidth}
\centering%
\scalebox{\scalefactor}{%
\begin{tikzpicture}
\node [draw, circle] (i) {\strut$i$};
\node [right=1 of i, draw, circle] (l) {\strut$l$};
\node [right=1 of l, draw, circle] (k) {\strut$k$};
\node [right=1 of k, draw, circle] (j) {\strut$j$};
\node [above=1 of $(l.north)!0.5!(k.north)$, draw, circle] (m) {\strut$m$};
\node [above left=1 and 1 of m, draw, circle] (t1) {\strut$t_1$};
\node [above right=1 and 1 of m, draw, circle] (t2) {\strut$t_2$};

\draw [->, blue] (t1) -- (i);
\draw [->, blue] (t1) -- (m);
\draw [->] (m) -- (k);
\draw [->, blue] (t2) -- (j);
\draw [->, blue] (t2) -- (m);
\draw [->, blue] (m) -- (l);
\end{tikzpicture}}
\caption{Right-left}
\label{fig:TrekIntersections:rl}
\end{subfigure}%
\caption{The four possible intersections of a trek from $i$ over $t_1$ to $k$ and a trek from $l$ over $t_2$ to $j$ and how to shorten the walk from $i$ to $j$ (indicated in blue). Each arrow represents a directed path.}
\label{fig:TrekIntersections}
\end{figure}
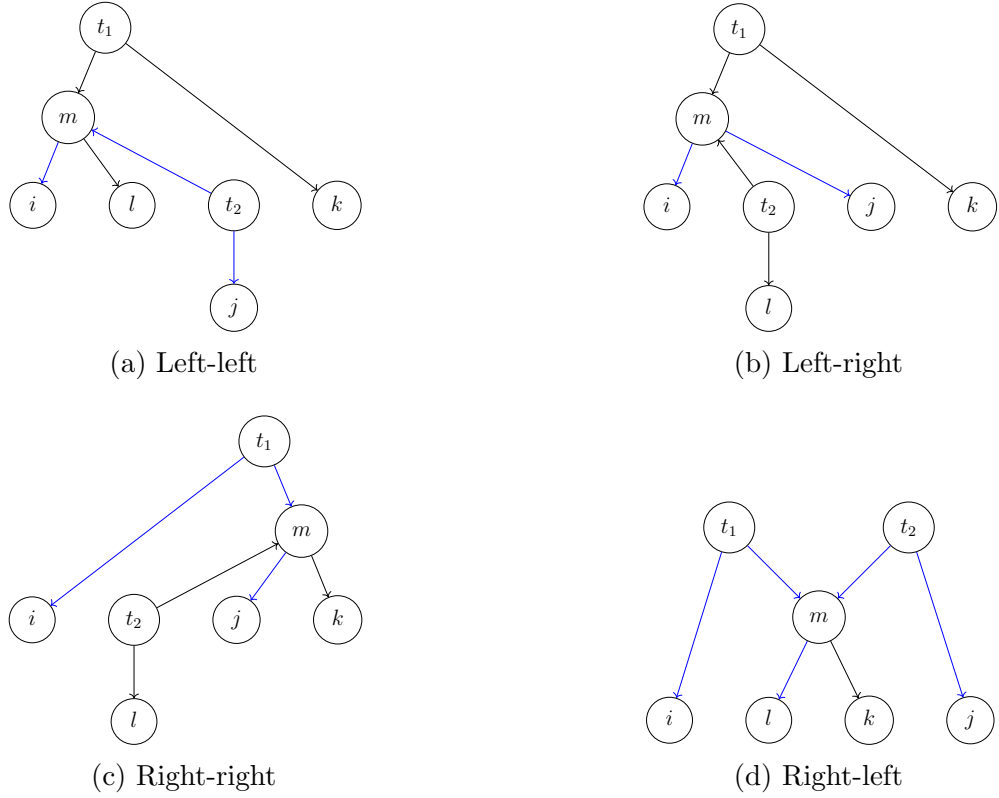

If $i$ and $j$ are connected in $\hatg$ given $V \setminus (\{i ,j \} \cup K)$ then there is path in $\hatg$ from $i$ to $j$ which uses only the vertices in $\{i, j\} \cup K$. Without loss of generality, this path is simple, i.e., it does not repeat a vertex. Let $i = k_0, k_1, \dots, k_s, k_{s+1} = j$ be the sequence of vertices on this path where $\{k_1, \dots, k_s\} \subseteq K$. Each edge $k_p \edge k_{p+1}$ in $\hatg$ corresponds to a trek in~$\cg$. Even though the path in $\hatg$ is simple, these treks can still intersect in~$\cg$. Suppose that the edges $k_p \edge k_{p+1}$ and $k_q \edge k_{q+1}$ with $p < q$ correspond to treks $T_1$ and $T_2$ in $\cg$ which intersect. There are four cases depending on where this intersection occurs which are depicted in \Cref{fig:TrekIntersections}. In the first three cases, there is a trek from $k_p$ to $k_{q+1}$ through the intersection vertex --- corresponding to a trek from $i$ to $j$ through $m$ in \Cref{fig:TrekIntersections}. This allows us to shorten the path in $\hatg$ by omitting $k_{p+q}, \dots, k_q$. The resulting path in $\hatg$ still has all its vertices contained in~$K$.
The fourth case \Cref{fig:TrekIntersections:rl} goes from $i$ over $m$ to $l$, back up to $m$ and then to~$j$, omitting $k$ from the path in~$\hatg$. The walk in $\cg$ traced out by this is a zig-zag with a tail $m \to l$ whose endpoint $l$ is still in~$K$.
For every pair of treks, multiple of these cases can occur simultaneously and each case can occur multiple times. One can freely choose any such intersection and resolve it according to \Cref{fig:TrekIntersections} which results in a shortened path in~$\hatg$. Iterating this procedure produces a simple path in $\hatg$ from $i$ to $j$ whose inner vertices are all contained in~$K$. The union of the corresponding treks in $\cg$ is a zig-zag with tails $\cz \subseteq \cg$ as in \Cref{fig:ZigZagGraph}. 

By construction the subgraph $\cz$ has the following property: for each collider $m$ on $\cz$, there is tail starting at $m$ and completely contained in $\cz$ whose end vertex is in~$K$. Hence, \Cref{prop:NoCIsOnZigZag} applies and shows that $i \not\indep j \mid K'$ on $\cm_\cz$ where $K'$ is the set of vertices on $\cz$ which are in $K$. Since $\cz$ is a subgraph of $\cg$, \Cref{lemma:Specialization} implies that $i \not\indep j \mid K$ on~$\cm_\cg$.
\end{proof}

\section*{Acknowledgements}

Tobias Boege was partially supported by the Academy of Finland grant number 323416 and by the Wallenberg Autonomous Systems and Software Program (WASP) funded by the Knut and Alice Wallenberg Foundation. Benjamin Hollering was supported by the Alexander von Humboldt Foundation. Mathias Drton, Pratik Misra, and Daniela Schkoda acknowledge support from the European Research Council (ERC) under the European Union’s Horizon 2020 research and innovation programme (grant agreement No 883818). Sarah Lumpp acknowledges support by the Munich Data Science Institute (MDSI) at Technical University of Munich as part of the focus topic CAUSE. Daniela Schkoda and Sarah Lumpp were also supported by the DAAD programme Konrad Zuse Schools of Excellence in Artificial Intelligence, sponsored by the Federal Ministry of Education and Research.

Part of this research was performed while some of the authors were visiting the Institute for Mathematical and Statistical Innovation (IMSI), which is supported by the National Science Foundation (Grant No. DMS-1929348). Benjamin Hollering and Tobias Boege also thank the Combinatorial Coworkspace (funded by Combinatorial Synergies SPP 2458). 

We thank Shiva Darshan for helpful discussions regarding the proof of \Cref{thm:MarginalIndependence}.

\bibliographystyle{plain}
\bibliography{refs.bib}

\clearpage

\appendix \section{Proofs}

\subsection{Proof of \cref{Prop:TrekDependence}}
\label{appendix:TrekDependence}
In this subsection we provide a proof of \cref{Prop:TrekDependence} by constructing a specific submodel of the Lyapunov model and then inductively showing that the determinant of the submatrix $\Sigma_{iK, jK}$ does not vanish for a trek $\ct$ with endpoints $i$ and $j$ where the conditioning set $K = V(\ct) \setminus \{i, j\}$. 

Let $\ct$ be a trek with endpoints $i$ and $j$ such that the length of the left branch, whose sink is $i$, is $\ell$ and the length of the right branch, whose sink is $j$, is $r$. Let $K = V(\ct) \setminus \{i, j\}$ be the set of all other nodes on $\ct$. Let $\cm'_\ct \subseteq \cm_\ct$ of the Lyapunov model on $\ct$ obtained from the trek rule in \cref{prop:TrekRule} by setting every edge parameter $m_{qp} = 1$ for all $p\rightarrow q \in E(\ct)$. In other words
\[
\cm'_\ct = \left \{ \Sigma \in \pd_n ~:~ \sigma_{ij} = \sum_{\substack{T : \text{$(\ell, r)$-trek} \\ \text{from $i$ to $j$}}} 2 \zeta^{\ell+r+1} \binom{\ell+r}{\ell} \right \}. 
\]
This follows directly from \cref{prop:TrekRule} since every trek monomial $m_T = \prod_{e \in T}m_e = 1$. Thus we can think of any matrix $\Sigma \in \cm'_\ct$ as a matrix whose entries are univariate polynomials in $\zeta$. Our goal is now to show that for almost all $\Sigma \in \cm'_\ct$, $|\Sigma_{iK, jK}| \neq 0$. To do this, we will focus on computing the lowest degree term in $|\Sigma_{iK, jK}|$ as a polynomial in $\zeta$.
Formally, the \emph{lowest degree term} of a univariate polynomial $f = \sum_{k=k_0}^n f_k \zeta^k$ with $f_{k_0} \not= 0$ is $\low(f) \defas f_{k_0} \zeta^{k_0}$.

Before we begin, note that every entry of every $\Sigma \in \cm_\ct'$ is divisible by $2\zeta$. In the following sequence of lemmas, we will instead consider a slight alteration of the above parameterization given by
\[
\cm''_\ct  = \left\{\Sigma \in \pd_n ~:~ \sigma_{ij} = \sum_{\substack{T : \text{$(\ell, r)$-trek} \\ \text{from $i$ to $j$}}} \zeta^{\ell+r} \binom{\ell+r}{\ell} \right\}.
\]
This simply corresponds to factoring out the common $2\zeta$ from each entry of $\Sigma$ which does not change whether or not a minor of $\Sigma$ vanishes. Thus if $\Sigma$ exhibits a conditional dependence then $2\zeta\,\Sigma$ will exhibit the same conditional dependence. To prove that our original model $\cm_\ct$ satisfies $i \not \indep j \mid K$, it suffices to prove it for a matrix in $\cm_\ct''$.

The following easy facts are used throughout the proofs in this section:
\begin{itemize}
\item The lowest degree term is multiplicative: $\low(fg) = \low(f) \low(g)$.
\item Since $\ct$ is a trek, there is a unique shortest trek between any two vertices and thus there is a well-defined notion of distance on the trek.
\item Let the shortest trek between $i$ and $j$ have length $(\ell, r)$. Since it is unique, we have $\low(\sigma_{ij}) = \binom{\ell+r}{\ell} \zeta^{\ell+r}$ for every $\Sigma = \Sigma(\zeta) \in \cm_\ct''$.
\item An important special case of this formula occurs when $i$ and $j$ are on the same branch of $\ct$. In that case $\ell = 0$ or $r = 0$ and the binomial coefficient is~$1$.
\end{itemize}
We begin with the following lemma concerning the lowest degree terms of principal minors.

\begin{lemma}
\label{lemma:LowestDegPrincMinor}
Let $\ct$ be a trek and $\Sigma \in \cm_\ct''$. For any $K \subseteq V$ we have $\low(|\Sigma_K|) = 1$. 
\end{lemma}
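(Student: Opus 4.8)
The plan is to read off the \emph{constant term} of $|\Sigma_K|$, viewed as a univariate polynomial in $\zeta$, and to show that it equals $1$. By the definition of $\low$, a nonzero constant term forces the lowest-degree term to sit in degree $0$, so this computation yields $\low(|\Sigma_K|) = 1$ immediately. The most economical route to the constant term is to specialize $\zeta = 0$, which avoids any discussion of cancellation.

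First I would record that the matrix $\Sigma = \Sigma(\zeta)$ specializes to the identity at $\zeta = 0$. In the parameterization of $\cm''_\ct$, each entry is $\sigma_{pq} = \sum_{T} \zeta^{\ell_T + r_T}\binom{\ell_T + r_T}{\ell_T}$, the sum ranging over all $(\ell_T,r_T)$-treks $T$ from $p$ to $q$; this is a genuine (finite) polynomial in $\zeta$. If $p \neq q$, every trek between them traverses at least one edge, so $\ell_T + r_T \geq 1$ and every monomial has strictly positive degree in $\zeta$; hence $\sigma_{pq}(0) = 0$. If $p = q$, the empty trek contributes the constant $1$ while all other treks again contribute positive-degree monomials, so $\sigma_{pp}(0) = 1$. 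Therefore $\Sigma(0) = \Id$, and consequently $\Sigma_K(0) = \Id$ for every $K \subseteq V$.

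It then remains to observe that, since $|\Sigma_K|$ is a polynomial in $\zeta$, its constant term is its value at $\zeta = 0$, namely $|\Sigma_K(0)| = |\Id| = 1$. As this is nonzero, the lowest-degree term of $|\Sigma_K|$ lies in degree $0$ and equals $1$, which is the claim.

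I expect essentially no obstacle here: the entire content is the observation that the off-diagonal entries vanish at $\zeta = 0$ while the diagonal entries have constant term $1$. Should one prefer an argument that exercises the facts listed just before the lemma, the Leibniz expansion $|\Sigma_K| = \sum_{\pi} \sgn(\pi) \prod_{p \in K}\sigma_{p,\pi(p)}$ gives the same conclusion: by multiplicativity of $\low$ and the fact that $\low(\sigma_{pq})$ has degree equal to the trek-distance $d(p,q)$, each summand has lowest degree $\sum_{p} d(p,\pi(p))$, which is minimized only by the identity permutation, with value $0$ and coefficient $\prod_{p}\low(\sigma_{pp}) = 1$. Every other permutation forces at least one off-diagonal factor and hence a strictly positive degree, so no competing degree-$0$ contribution arises and there is no cancellation to rule out.
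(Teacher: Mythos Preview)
Your proof is correct and matches the paper's approach: the paper's proof is precisely your Leibniz-expansion alternative, observing that diagonal entries have $\low(\sigma_{ii})=1$ while off-diagonal entries have strictly positive lowest degree, so only the identity permutation contributes in degree~$0$. Your primary argument via evaluation at $\zeta=0$ is a clean repackaging of the same observation.
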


\begin{proof}
Under the parametrization of $\cm_\ct''$, any diagonal entry $\sigma_{ii}$ has $\low(\sigma_{ii}) = 1$ coming from the empty trek on $i$, while every off diagonal entry has $\deg(\low(\sigma_{ij})) \ge 1$. The result then follows immediately from the Leibniz formula for the determinant. 
\end{proof}

In the following, we compute minors of the form $|\Sigma_{iK,jK}|$ which are decisive for conditional independence by \Cref{cor:GaussianCI}. If $K = V \setminus \{i,j\}$, then this minor is, up to sign, the adjoint entry $\adj(\Sigma)_{ij}$. To facilitate inductive proofs via Schur's formula (which computes $|\Sigma_{iK,jK}|$ using entries of $\adj(\Sigma_K)$), it is important to not only show that these minors are nonzero, but also to consistently compute the signs.

\begin{convention} \label{conv:Ordering}
Throughout this section we assume the following numbering of the vertices $V = \{1,2,\ldots,n\}$ of the trek: the leaf on the left branch is labeled $1$, its parent $2$, and so on, until the top node is labeled $\ell+1$; from there we descend down with $\ell+2$ and so on until the leaf on the right branch is labeled $n = \ell+r+1$. Any minor $|\Sigma_{I,J}|$ is computed by ordering the rows and columns of the matrix with respect to this ordering.
\end{convention}

\begin{lemma}
\label{lemma:LowestDegAprMinor}
Let $\ct$ be a trek and $\Sigma \in \cm_\ct''$. Let $i, j \in V(\ct)$ be distinct and $(\ell, r)$ be the length of the shortest trek between them. For any $K \subseteq V(\ct) \setminus \{i,j\}$, either $|\Sigma_{iK,jK}| = 0$ or $\deg(\low(|\Sigma_{iK,jK}|)) \ge \ell+r$. If $i$ and $j$ are adjacent, then $\low(|\Sigma_{iK,jK}|) = \zeta$.
\end{lemma}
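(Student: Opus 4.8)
The plan is to expand the almost-principal minor $|\Sigma_{iK,jK}|$ by the Leibniz formula and to control the lowest-degree term of each summand separately. The starting observation is that, under the numbering of \Cref{conv:Ordering}, the underlying undirected graph of the trek is the path $1 - 2 - \cdots - n$, so the shortest trek between two nodes $a,b$ has length exactly $|a-b|$; combined with the uniqueness of shortest treks, this gives $\deg(\low(\sigma_{ab})) = |a-b|$ for every off-diagonal entry, while $\deg(\low(\sigma_{aa})) = 0$ comes from the empty trek. Writing $R = \{i\}\cup K$ for the row index set and $C = \{j\}\cup K$ for the column index set, each Leibniz term corresponds to a bijection $\mu: R \to C$, and by multiplicativity of $\low$ its lowest-degree contribution has degree $D(\mu) = \sum_{a\in R}|a - \mu(a)|$.

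The degree bound then reduces to showing $D(\mu)\ge |i-j| = \ell+r$ for every bijection $\mu$. To see this, I would view $\mu$ as a functional digraph on $\{i,j\}\cup K$ with an arc $a \to \mu(a)$ for each $a\in R$. Since $i$ lies in $R\setminus C$ and $j$ in $C\setminus R$, while every element of $K$ has in- and out-degree one, this digraph decomposes into a single directed path from $i$ to $j$ together with cycles supported on $K$. Each cycle contributes a nonnegative amount to $D(\mu)$, and the telescoping triangle inequality $\sum_s |v_s - v_{s+1}| \ge |v_0 - v_m|$ bounds the path's contribution below by $|i-j|$. Hence $D(\mu)\ge \ell+r$, and since adding polynomials cannot lower the minimal degree, either $|\Sigma_{iK,jK}| = 0$ or $\deg(\low(|\Sigma_{iK,jK}|))\ge \ell+r$.

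For the adjacent case $|i-j| = 1$, I would identify exactly which $\mu$ attain $D(\mu) = 1$. Trivial (fixed-point) cycles contribute $0$ while any nontrivial cycle contributes at least $2$, and a path from $i$ to $j$ of total cost $1$ must be the single arc $i\to j$; therefore the only contributing bijection fixes every element of $K$ and sends $i$ to $j$. Its lowest-degree coefficient is the product $\low(\sigma_{ij})\prod_{a\in K}\low(\sigma_{aa}) = \zeta$. It remains to pin down the Leibniz sign: because $i$ and $j$ are consecutive integers, no element of $K$ lies between them, so $i$ and $j$ occupy the same rank in $R$ and in $C$, respectively. Consequently the induced permutation of \emph{positions} is the identity, its sign is $+1$, no other term can cancel it, and $\low(|\Sigma_{iK,jK}|) = \zeta$.

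I expect the main difficulty to be the sign bookkeeping in this last step: one must work with the position permutation dictated by \Cref{conv:Ordering} rather than the vertex bijection $\mu$ itself, and verify that the ``adjacent integers'' property collapses the relevant transposition count to zero, so that the coefficient is exactly $+1$ rather than $-1$ and the unique minimal term cannot be cancelled.
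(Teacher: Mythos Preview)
Your proposal is correct and follows essentially the same approach as the paper: both expand $|\Sigma_{iK,jK}|$ via Leibniz, trace the iterated images of $i$ under the bijection to obtain a walk from $i$ to $j$ whose total edge-length is bounded below by $|i-j|=\ell+r$ via the triangle inequality, and then for the adjacent case identify the unique minimizing bijection ($i\mapsto j$, $K$ fixed pointwise) and verify that its position-permutation is the identity. Your explicit decomposition of the bijection into a single $i$-to-$j$ path plus cycles on $K$ is a slightly cleaner packaging of exactly the argument the paper uses.
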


\begin{proof}
Consider the Leibniz formula
\[
  |\Sigma_{iK,jK}| = \sum_{\substack{\pi: iK \to jK \\ \text{bijective}}} \sgn(\pi) \prod_{k \in iK} \sigma_{k \pi(k)}.
\]
Note that $\Sigma_{iK,jK}$ is a matrix with a particular ordering on its rows and columns. This ordering fixes a bijection between $iK$ and $jK$ with respect to which $\sgn(\pi)$ is well-defined.

It suffices to show that each summand is either zero or has lowest term of $\text{degree} \ge \ell+r$. Fix a bijection $\pi: iK \to jK$. The image $\pi(i)$ either equals $j$ or is an element $k_1 \in K$. In~the latter case, we may apply $\pi$ again to get $\pi(k_1) = \pi^2(i)$ which either equals $j$ or $k_2 \in K$. Continuing in this way, collect all iterated images of $i$ into a set $A \subseteq iK$ including $i$ and excluding the $j$ which is eventually reached. Observe that
\begin{align}
  \label{eq:Leibniz}
  \deg \low\left(\prod_{k \in A} \sigma_{k \pi(k)}\right)
  = \sum_{k \in A} \deg \low(\sigma_{k \pi(k)}) \ge \low(\sigma_{ij}) = \ell + r,
\end{align}
since the sequence $\low(\sigma_{i \pi(i)}), \low(\sigma_{\pi(i)\pi^2(i)}), \dots, \low(\sigma_{\pi^m(i)j})$ given by $A$ encodes an undirected path from $i$ to $j$ which is at least as long as the shortest trek.

The lower bound in \eqref{eq:Leibniz} can only be attained with equality if $A$ is completely contained in the shortest trek $\ct'$ connecting $i$ and $j$ and if the successive images $\pi(i), \pi^2(i), \ldots$ are ordered such that $\pi(i)$ is the closest node to $i$, $\pi^2(i)$ the second closest and so on; while all elements of $K \setminus A$ are fixed points of~$\pi$ and do not contribute to the degree. In this case, the $\sigma_{kk}$, $k \in K \setminus A$, have lowest term~$1$ and the shortest treks between adjacent members of the sequence $i, \pi(i), \pi^2(i), \ldots, j$ give a disjoint decomposition of $\ct'$ hence achieving degree exactly~$\ell+r$ in total. If $i$ and $j$ are adjacent, then $\ell+r = 1$ and the only bijection $\pi$ which satisfies this sends $i$ to $j$ directly and fixes every other $k \in K$. Since $i$ and $j$ are adjacent, they appear in the same place in the ordering of $iK$ and $jK$, respectively, hence $\pi$ corresponds to the identity permutation of the rows and columns and thus we compute $\low(|\Sigma_{iK,jK}|) = \low(\sigma_{ij}) = \zeta$.
\end{proof}

As the previous proof shows, the lower degree bound $\ell+r$ is achievable and the Leibniz formula always has summands of this degree. Whether or not these summands cancel to cause a bump in lowest degree depends on the locations of $i$ and $j$ on the trek and to each other, as well as the conditioning set~$K$.

\begin{convention} \label{conv:Schur}
In the following proofs, we will use the Schur complement expansion of the determinant:
\begin{align}
  \label{eq:Schur}
  \sgn_K(i,j) |\Sigma_{iK,jK}| = \sigma_{ij} |\Sigma_K| - \Sigma_{i,K} \adj(\Sigma_K) \Sigma_{K,j} = \sigma_{ij} |\Sigma_K| - \sum_{p,q \in K} \sigma_{ip} \hsig_{pq} \sigma_{qj},
\end{align}
where $\hsig_{pq} = \adj(\Sigma_K)_{pq}$. To use Schur's formula to expand the determinant by the $(i,j)$-entry of $\Sigma_{iK,jK}$ as indicated, we have to permute the $i$\textsuperscript{th} row as well as the $j$\textsuperscript{th} column to be the first row and column, respectively. The symbol $\sgn_K(i,j)$ denotes the sign change in the determinant incurred by this. This sign depends only on the difference of the positions of $i$ in $iK$ and $j$ in $jK$. We will assume below that $K$ contains all interior nodes on the shortest trek $\ct'$ between $i$ and $j$. In this case and if $i \neq j$, $\sgn_K(i,j)$ can be computed as
\begin{align}
  \label{eq:Sign}
  \sgn_K(i,j) = (-1)^{n'} = (-1)^{|j-i|+1},
\end{align}
where $n'$ is the number of nodes on~$\ct'$, which equals $|j-i|+1$ under \Cref{conv:Ordering}.
\end{convention}

\begin{lemma}
\label{lemma:AdjointDegreeJump}
Let $\ct$ be a trek and $\Sigma \in \cm_\ct''$. Let $i, j \in V(\ct)$ be distinct, non-adjacent vertices on the same branch of~$\ct$. Denote the length of the shortest path $\ct'$ between $i$ and $j$ by $\ell > 1$. If $K \subseteq V(\ct) \setminus \{i,j\}$ contains all interior nodes on $\ct'$, then either
$|\Sigma_{iK,jK}| = 0$ or $\deg \low(|\Sigma_{iK,jK}|) > \ell$.
\end{lemma}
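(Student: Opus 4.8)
The plan is to prove that the coefficient of $\zeta^{\ell}$ in $|\Sigma_{iK,jK}|$ vanishes. Together with \Cref{lemma:LowestDegAprMinor}, which already gives $\deg\low(|\Sigma_{iK,jK}|)\ge\ell$ whenever this minor is nonzero, this immediately yields the dichotomy $|\Sigma_{iK,jK}|=0$ or $\deg\low(|\Sigma_{iK,jK}|)>\ell$. Writing each entry as $\sigma_{ab}=\low(\sigma_{ab})+(\text{higher-order terms})$ and letting $L$ be the matrix of leading monomials $\low(\sigma_{ab})$ indexed by the rows $iK$ and columns $jK$, I observe that replacing any factor in a Leibniz product by a higher-order correction strictly raises its degree. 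Hence the coefficient of $\zeta^{\ell}$ in $|\Sigma_{iK,jK}|$ equals the coefficient of $\zeta^{\ell}$ in $\det L$, and it suffices to show the latter is zero for $\Sigma\in\cm''_\ct$.

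First I would localize $\det L$ to the segment of the branch joining $i$ and $j$. Reusing the orbit analysis behind \eqref{eq:Leibniz} in the proof of \Cref{lemma:LowestDegAprMinor}, a permutation $\pi\colon iK\to jK$ contributes a term of degree $\ell$ only if the chain $i,\pi(i),\pi^2(i),\dots,j$ traces a shortest trek from $i$ to $j$ while every remaining index of $K$ is a fixed point: a nontrivial cycle of $\pi$ contributes degree at least $2$, and a chain leaving the shortest trek $\ct'$ contributes degree strictly above $\ell$. Since $i$ and $j$ lie on the same branch, $\ct'$ is the directed segment between them, so the only indices of $K$ that can move are those in $K_0\defas\{i+1,\dots,j-1\}$ (the interior of $\ct'$, contained in $K$ by hypothesis), all others being pinned to their diagonal entry $\low(\sigma_{kk})=1$. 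Consequently the $\zeta^{\ell}$-coefficient of $\det L$ equals, up to one global sign, the $\zeta^{\ell}$-coefficient of $\det L'$, where $L'$ is the leading matrix of $\Sigma_{iK_0,jK_0}$.

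By \Cref{conv:Ordering}, for $a,b\in\{i,\dots,j\}$ on a single branch we have $\low(\sigma_{ab})=\zeta^{|a-b|}$, the binomial coefficient being $\binom{|a-b|}{0}=1$. Thus $L'$ is precisely the submatrix of the $(\ell+1)\times(\ell+1)$ \emph{Kac--Murdock--Szeg\H{o}} matrix $A=(\zeta^{|a-b|})_{a,b\in\{i,\dots,j\}}$ obtained by deleting the row of $j$ and the column of $i$; equivalently $\det L'=\pm\,\det(A)\,(A^{-1})_{ij}$. The matrix $A$ is invertible with $\det A=(1-\zeta^{2})^{\ell}$ and, classically, $A^{-1}$ is tridiagonal. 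Since $i$ and $j$ are non-adjacent, $|i-j|=\ell\ge 2$, so $(A^{-1})_{ij}=0$ and therefore $\det L'\equiv 0$. In particular its $\zeta^{\ell}$-coefficient vanishes, completing the proof.

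The main obstacle is the bookkeeping in the localization step: one must verify that off-segment indices of $K$ enter a minimal-degree Leibniz term only as fixed points, and that collapsing them alters $\det L$ by a single global sign, so that $\det L'\equiv 0$ transfers to the coefficient of interest. The vanishing of $\det L'$ is the tridiagonality of the inverse KMS matrix, for which a self-contained proof expands the relevant cofactor by a short recursion in $\ell$ and invokes $\ell\ge 2$. I note that the same cancellation can instead be carried out inside the Schur-complement framework of \Cref{conv:Schur}: in \eqref{eq:Schur} the term $\sigma_{ij}|\Sigma_K|$ supplies the monomial $\zeta^{\ell}$, the diagonal adjoint terms $p=q$ and the nearest-neighbour terms $|p-q|=1$ along $\ct'$ supply the remaining degree-$\ell$ contributions (the adjoint entries $\hsig_{pq}$ with $|p-q|\ge 2$ being of strictly higher degree by the inductive hypothesis of this very lemma), and a telescoping of signs cancels everything at order $\zeta^{\ell}$; this route is more computational but fully elementary.
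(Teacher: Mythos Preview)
Your argument is correct and genuinely different from the paper's. The paper proceeds by induction on $\ell$: the base case $\ell=2$ is handled via the Leibniz formula, and the induction step expands $|\Sigma_{iK,jK}|$ by the Schur complement of \Cref{conv:Schur}, classifies the pairs $(p,q)$ into four geometric cases, invokes the induction hypothesis to kill the contributions with $|p-q|\ge 2$ in case~(d), and checks that the surviving degree-$\ell$ pieces from $p=q$ and $|p-q|=1$ telescope to cancel $\low(\sigma_{ij}|\Sigma_K|)=\zeta^\ell$. This is exactly the ``more computational but fully elementary'' route you sketch in your final paragraph.

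Your main argument bypasses the induction. Passing to the leading-monomial matrix and localising to the segment $K_0=\{i+1,\dots,j-1\}$ reduces the question to showing that the $\ell\times\ell$ minor $L'$ of the Kac--Murdock--Szeg\H{o} matrix $A=(\zeta^{|a-b|})$ obtained by deleting row $j$ and column $i$ has vanishing determinant, which you deduce from the tridiagonality of $A^{-1}$. In fact $\det L'=0$ has an even quicker proof: the row of $L'$ indexed by $i$ is $(\zeta,\zeta^2,\dots,\zeta^{\ell})$, which is $\zeta$ times the row indexed by $i+1$, so $L'$ has dependent rows for every $\ell\ge 2$. Your localisation bookkeeping is sound: elements of $K\setminus K_0$ lie either below $i$ or above $j$ in the ordering of \Cref{conv:Ordering}, hence occupy identical positions in $iK$ and $jK$, so fixing them contributes a trivial sign and a factor of $1$ from each $\low(\sigma_{kk})$. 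The payoff of your route is a structural explanation of the cancellation (rank deficiency of $L'$), at the cost of importing the KMS fact; the paper's route stays entirely within the Schur framework already set up for \Cref{Prop:BigTrekDependence}, at the cost of a longer case analysis.
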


\begin{figure}
\newcommand{\scalefactor}{0.7}
\begin{subfigure}[t]{0.22\textwidth}
\centering%
\scalebox{\scalefactor}{\begin{tikzpicture}
\node [draw, circle] (1) {\strut$t$};
\node [draw, circle, below left=1 and 0 of 1] (j) {\strut$j$};
\node [draw, circle, below left=1 and 0 of j] (i) {\strut$i$};
\node [below left=1 and 0 of i] (d3) {\rotatebox{75}{\ldots}};

\node [draw, circle, below right=1 and 0 of 1] (p) {\strut$p$};
\node [draw, circle, below right=1 and 0 of p] (q) {\strut$q$};
\node [below right=1 and 0 of q] (d6) {\rotatebox{-75}{\ldots}};

\draw [->, dashed, blue]  (1) --  (j);
\draw [->, blue]  (j) --  (i);
\draw [->]  (i) -- (d3);

\draw [->, dashed, blue]  (1) -- (p);
\draw [->]  (p) -- (q);
\draw [->]  (q) -- (d6);
\end{tikzpicture}}
\caption{}
\label{fig:AdjointZero:a}
\end{subfigure}%
~
\begin{subfigure}[t]{0.22\textwidth}
\centering%
\scalebox{\scalefactor}{\begin{tikzpicture}
\node [draw, circle] (1) {\strut$t$};
\node [draw, circle, below left=1 and 0 of 1] (j) {\strut$j$};
\node [draw, circle, below left=1 and 0 of j] (p) {\strut$p$};
\node [draw, circle, below left=1 and 0 of p] (i) {\strut$i$};
\node [below left=1 and 0 of i] (d4) {\rotatebox{75}{\ldots}};

\node [draw, circle, below right=1 and 0 of 1] (q) {\strut$q$};
\node [below right=1 and 0 of q] (d6) {\rotatebox{-75}{\ldots}};

\draw [->, dashed, blue]  (1) --  (j);
\draw [->, blue]  (j) --  (p);
\draw [->, blue]  (p) --  (i);
\draw [->]  (i) -- (d4);

\draw [->, dashed, blue]  (1) --  (q);
\draw [->]  (q) -- (d6);
\end{tikzpicture}}
\caption{}
\label{fig:AdjointZero:b}
\end{subfigure}
~
\begin{subfigure}[t]{0.22\textwidth}
\centering%
\scalebox{\scalefactor}{\begin{tikzpicture}
\node [draw, circle] (1) {\strut$t$};
\node [draw, circle, below left=1 and 0 of 1] (j) {\strut$j$};
\node [draw, circle, below left=1 and 0 of j] (i) {\strut$i$};
\node [draw, circle, below left=1 and 0 of i] (p) {\strut$p$};
\node [below left=1 and 0 of p] (d4) {\rotatebox{75}{\ldots}};

\node [draw, circle, below right=1 and 0 of 1] (q) {\strut$q$};
\node [below right=1 and 0 of q] (d6) {\rotatebox{-75}{\ldots}};

\draw [->, dashed, blue]  (1) --  (j);
\draw [->, blue]  (j) --  (i);
\draw [->, dashed, blue]  (i) --  (p);
\draw [->]  (p) -- (d4);

\draw [->, dashed, blue]  (1) --  (q);
\draw [->]  (q) -- (d6);
\end{tikzpicture}}
\caption{}
\label{fig:AdjointZero:c}
\end{subfigure}~
\begin{subfigure}[t]{0.22\textwidth}
\centering%
\scalebox{\scalefactor}{\begin{tikzpicture}
\node [draw, circle] (1) {\strut$t$};
\node [draw, circle, below left=1 and 0 of 1] (j) {\strut$j$};
\node [draw, circle, below left=1 and 0 of j] (q) {\strut$q$};
\node [draw, circle, below left=1 and 0 of q] (p) {\strut$p$};
\node [draw, circle, below left=1 and 0 of p] (i) {\strut$i$};
\node [below left=1 and 0 of i] (d4) {\rotatebox{75}{\ldots}};

\node [below right=1 and 0 of 1] (d6) {\rotatebox{-75}{\ldots}};

\draw [->]  (1) --  (j);
\draw [->, blue]  (j) --  (q);
\draw [->, blue]  (q) --  (p);
\draw [->, blue]  (p) -- (i);
\draw [->]  (i) -- (d4);

\draw [->]  (1) -- (d6);

\draw[decoration={brace, amplitude=0.8em}, decorate, line width=1pt] (i) -- (p) node [midway, above, xshift=-1.5em, yshift=-0.5em] {\strut$a$};
\draw[decoration={brace, amplitude=0.8em}, decorate, line width=1pt] (p) -- (q) node [midway, above, xshift=-1.5em, yshift=-0.5em] {\strut$c$};
\draw[decoration={brace, amplitude=0.8em}, decorate, line width=1pt] (q) -- (j) node [midway, above, xshift=-1.5em, yshift=-0.58em] {\strut$b$};
\end{tikzpicture}}
\caption{}
\label{fig:AdjointZero:d}
\end{subfigure}
\caption{The four cases in \Cref{lemma:AdjointDegreeJump}.}
\label{fig:AdjointZero}
\end{figure}

\begin{proof}
Without loss of generality, we may suppose by symmetry that both $i$ and $j$ are on the left branch of the trek and $i < j$. Then the shortest path connecting them is also the unique shortest trek connecting them, which has length~$(\ell, 0)$.

We perform induction on $\ell \ge 2$. The case $\ell = 2$ can be handled using the methods of \Cref{lemma:LowestDegAprMinor}. The shortest trek $\ct'$ between $i$ and $j$ is the form $i \to k \to j$ where $k \in K$ by assumption. Using the terminology of the proof of \Cref{lemma:LowestDegAprMinor}, the only two bijections $iK \to jK$ which contribute to the $\zeta^2$ term are:
\begin{enumerate}
\item $\pi_1$ sending $i \mapsto j$ and fixing every other element. Since $k$ is between $i$ and $j$ in the ordering, the element $i$ in $iK$ occupies the same place as $k$ in $jK$; and $k$ in $iK$ the same place as $j$ in $jK$. Hence $\pi_1$ corresponds to an adjacent transposition of the rows and columns and has $\sgn(\pi_1) = -1$. This summand is thus~$-\zeta^2$.
\item $\pi_2$ sending $i \mapsto k \mapsto j$ and fixing every other element. By similar reasoning as in the previous case, this corresponds to the identity permutation of the rows and columns and thus the contribution is $\zeta^2$.
\end{enumerate}
The two summands cancel and therefore either the entire determinant vanishes or the lowest term has $\text{degree} > 2$.

For the induction step, we use the Schur complement expansion \eqref{eq:Schur}. By~\cref{lemma:LowestDegPrincMinor} and the definition of $\cm_\ct''$, we know $\low(\sigma_{ij}|\Sigma_K|) = \zeta^{\ell}$. The goal is to show that for each choice of $p, q \in K$, either $\sigma_{ip} \hsig_{pq} \sigma_{qj} = 0$ or $\deg \low(\sigma_{ip} \hsig_{pq} \sigma_{qj}) \ge \ell$ and in case of equality to compute the coefficient of the $\zeta^\ell$ term.
We distinguish four cases (up to exchanging $p$ and~$q$ or switching branches) depending on the locations of $p$ and $q$ with respect to $i$ and $j$:
\begin{enumerate}[itemsep=0.6em, label=(\alph*)]
\item In case of \Cref{fig:AdjointZero:a}, notice that the shortest trek from $i$ to $p$ strictly includes the entire path from $i$ to $j$, so $\deg(\low(\sigma_{ip} \hsig_{pq} \sigma_{qj})) \ge \deg(\low(\sigma_{ip})) > \ell$.
\item In case of \Cref{fig:AdjointZero:b}, the reason is similar. \Cref{lemma:LowestDegAprMinor} shows $\deg(\low(\sigma_{ip} \hsig_{pq})) > \ell$.
\item In case of \Cref{fig:AdjointZero:c}, we know by \Cref{lemma:LowestDegAprMinor} that $\deg(\low(\hsig_{pq})) > \ell$.
\item\label{lemma:AdjointDegreeJump:4} The last case is depicted in \Cref{fig:AdjointZero:d}.
Then $\low(\sigma_{ip}) = \zeta^a$ and $\low(\sigma_{qj}) = \zeta^b$ and $a+b+c=\ell$. So
\[
\deg(\low(\sigma_{ip} \hsig_{pq} \sigma_{qj})) = \ell \iff \deg(\low(\hsig_{pq})) = c.
\]
Notice that $p$ and $q$ are on the same branch of~$\ct$. If they coincide ($c=0$), then $\low(\hsig_{pq}) = 1$ by \Cref{lemma:LowestDegPrincMinor}; if they are adjacent ($c=1$), \Cref{lemma:LowestDegAprMinor} yields $\hsig_{pq} = -|\Sigma_{K\setminus \{q\},K\setminus\{p\}}| = -\zeta$. Otherwise, the induction hypothesis ensures $\hsig_{pq} = 0$ or $\deg \low(\hsig_{pq}) > c$, which do not contribute to the lowest term. 
This implies that
\begin{align*}
\low\left(\sum_{p,q} \sigma_{ip} \hsig_{pq} \sigma_{qj}\right) &= \sum_{\substack{p, q \\ c = 0}} \left(\zeta^a \cdot 1 \cdot \zeta^b\right) + \sum_{\substack{p, q \\ c = 1}} \left(\zeta^a \cdot (-\zeta) \cdot \zeta^b\right) \\
&= \ell \zeta^\ell - (\ell - 1)\zeta^\ell = \zeta^\ell.
\end{align*}
\end{enumerate}
Thus the only case which contributes to the $\zeta^\ell$ term in $|\Sigma_{iK,jK}|$ is \ref{lemma:AdjointDegreeJump:4}. Its contribution cancels with $\low(\sigma_{ij}|\Sigma_K|) = \zeta^\ell$ in \eqref{eq:Schur}, implying the result. 
\end{proof}

\begin{proposition}
\label{Prop:BigTrekDependence}
Let $\ct$ be a trek and $i, j \in V(\ct)$ be nodes on different branches of~$\ct$ (in particular neither of them is the top node). Denote the length of the shortest path $\ct'$ between $i$ and $j$ by $(\ell, r)$. If $K \subseteq V(\ct) \setminus \{i,j\}$ contains all interior nodes on $\ct'$, then there exists $\Sigma \in \cm_\ct''$ such that $i \not \indep j \mid K$. 
\end{proposition}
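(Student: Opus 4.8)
The plan is to reduce the statement, via \Cref{cor:GaussianCI}, to exhibiting one $\Sigma \in \cm''_\ct$ with $|\Sigma_{iK,jK}| \neq 0$. Since every entry of a matrix in $\cm''_\ct$ is a polynomial in $\zeta$, the minor $|\Sigma_{iK,jK}|$ is a univariate polynomial in $\zeta$, and it is not identically zero precisely when its lowest-degree term $\low(|\Sigma_{iK,jK}|)$ is nonzero; any $\zeta$ outside the finite zero set then gives the desired $\Sigma$. By \Cref{lemma:LowestDegAprMinor} every summand of the Leibniz expansion has $\zeta$-degree at least $\ell+r$, so I would compute the coefficient of $\zeta^{\ell+r}$ and show it does not vanish. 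Exactly as in the proof of \Cref{lemma:LowestDegAprMinor}, the permutations $\pi\colon iK \to jK$ contributing at degree $\ell+r$ are those whose only nontrivial action is a chain $i = a_0, a_1, \dots, a_m, a_{m+1} = j$ of nodes of strictly increasing position along the shortest trek $\ct'$, with every other element of $K$ a fixed point; an excursion off $\ct'$ or any additional cycle strictly raises the degree.

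Next I would record the contribution of one such chain. Label the nodes of $\ct'$ by their position, with $i$ at $0$, the top node $t$ at $\ell$, and $j$ at $\ell+r$, so a chain is encoded by a subset $S = \{s_1 < \dots < s_m\}$ of interior positions and decomposes $\ct'$ into consecutive segments. Each segment contributes the lowest-degree coefficient of the relevant entry $\sigma_{a_s a_{s+1}}$, i.e.\ the shortest-trek binomial coefficient: a segment contained in a single branch is a directed path and contributes $1$, while the unique segment straddling $t$ (present exactly when $\ell \notin S$), running from position $a$ to position $b$, contributes $\binom{b-a}{\ell-a}$. A direct sign computation, of the same type already used for \eqref{eq:Sign}, shows $\sgn(\pi_S) = \sgn_K(i,j)\,(-1)^{|S|}$, the key point being that inserting one extra stop into a chain flips the parity of the associated permutation. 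Writing $P(S)$ for the product of the segment coefficients, this yields
\[
  \low\!\big(|\Sigma_{iK,jK}|\big) = \sgn_K(i,j)\left(\sum_{S} (-1)^{|S|} P(S)\right) \zeta^{\ell+r}.
\]

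It remains to evaluate $\sum_S (-1)^{|S|} P(S)$. Because only the straddling segment carries a nontrivial factor, the sum factors into independent choices of the last chain-stop $a \le \ell-1$ to the left of $t$ and the first chain-stop $b \ge \ell+1$ to its right, and the alternating sums over the remaining stops telescope so that only $a \in \{0,1\}$ and $b \in \{\ell+r-1, \ell+r\}$ survive. The four surviving terms collapse by Pascal's rule,
\[
  \binom{\ell+r}{\ell} - \binom{\ell+r-1}{\ell} - \binom{\ell+r-1}{\ell-1} + \binom{\ell+r-2}{\ell-1} = \binom{\ell+r-2}{\ell-1},
\]
which is strictly positive since $\ell, r \ge 1$; the degenerate boundary cases where some of these indices coincide (e.g.\ $(\ell,r)=(1,1)$) are checked by hand and give the same value. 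Hence $\low(|\Sigma_{iK,jK}|) = \pm\binom{\ell+r-2}{\ell-1}\,\zeta^{\ell+r} \neq 0$, proving the claim. I expect the sign bookkeeping for $\sgn(\pi_S)$ and the verification that off-$\ct'$ nodes and extra cycles cannot reach degree $\ell+r$ to be the most delicate points. An alternative that stays closer to the preceding lemmas is a Schur-complement induction on $\ell+r$: in \eqref{eq:Schur} the same-branch adjoint entries $\hsig_{pq}$ drop out by \Cref{lemma:AdjointDegreeJump}, the diagonal and adjacent entries are handled directly, and the straddling entries are supplied by the inductive hypothesis, at the cost of a somewhat longer binomial identity that again reduces to $\binom{\ell+r-2}{\ell-1}$.
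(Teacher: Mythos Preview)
Your argument is correct and reaches the same endpoint as the paper, namely $\low(|\Sigma_{iK,jK}|)=(-1)^{\ell+r+1}\binom{\ell+r-2}{\ell-1}\zeta^{\ell+r}$, but by a genuinely different route. The paper proceeds by Schur-complement induction on $\ell+r$: it expands via \eqref{eq:Schur}, classifies the pairs $(p,q)\in K^2$ into same-branch, straddling, and top-node cases, kills the same-branch adjoint entries with \Cref{lemma:AdjointDegreeJump}, feeds the straddling $\hsig_{pq}$ back into the inductive hypothesis, and closes with a somewhat longer binomial computation. You instead work directly with the Leibniz expansion, identify the degree-$(\ell+r)$ contributors as monotone chains along $\ct'$ with all other elements fixed (this is precisely what the proof of \Cref{lemma:LowestDegAprMinor} establishes), and collapse the alternating sum to a four-term Pascal identity. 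Your approach is shorter and needs neither the induction nor \Cref{lemma:AdjointDegreeJump}; the paper's approach buys a cleaner handling of signs, since everything is absorbed into the conventions of \eqref{eq:Schur} and \eqref{eq:Sign}. You correctly flag the Schur-induction route as an alternative.

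One small point to tighten: your factoring into ``last left stop $a\le\ell-1$'' and ``first right stop $b\ge\ell+1$'' tacitly treats only chains with $\ell\notin S$. When the top node is itself a stop there is no straddling segment and $P(S)=1$; the alternating sum over those $S$ vanishes whenever $\ell+r\ge3$ (and for $\ell+r=2$ this is absorbed in your hand-checked boundary case), so the final value is unaffected, but the case deserves a sentence. Your sign claim $\sgn(\pi_S)=\sgn_K(i,j)\,(-1)^{|S|}$ is correct: $\pi_\emptyset$ acts on the $\ct'$-block as a single $(\ell+r)$-cycle of sign $(-1)^{\ell+r+1}=\sgn_K(i,j)$, and inserting one additional stop composes with a transposition in $jK$, flipping the sign.
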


\begin{proof}
Without loss of generality, we may assume that $i < j$ in the ordering, so that $i$ is on the left branch and $j$ on the right. We claim that $\low(|\Sigma_{iK,jK}|) = (-1)^{\ell+r+1} \binom{\ell+r-2}{\ell-1} \zeta^{\ell+r}$ which shows that $|\Sigma_{iK,jK}|$ is non-zero and yields conditional dependence for all but a finite number of values of $\zeta$.

We prove this statement by induction on $\ell, r \ge 1$, again using the Schur complement expansion \eqref{eq:Schur},
\[
  (-1)^{\ell+r+1} |\Sigma_{iK, jK}| = \sigma_{ij}|\Sigma_K|  - \Sigma_{i,K} \adj(\Sigma_K) \Sigma_{K,j} = \sigma_{ij}|\Sigma_K| - \sum_{p,q \in K} \sigma_{ip} \hsig_{pq} \sigma_{qj}
\]
with $\hsig_{pq} = \adj(\Sigma_K)_{pq}$.
By \cref{lemma:LowestDegPrincMinor} and the definition of $\cm_\ct''$, we have $\low(\sigma_{ij} |\Sigma_K|) = \binom{\ell+r}{\ell} \zeta^{\ell+r}$.
The other part of the expansion is a sum over $\sigma_{ip}\hsig_{pq}\sigma_{qj}$ which we deal with term by term, so fix $p, q \in K$. If~$p$ is strictly closer to $j$ than $q$ is (or, symmetrically, $q$ is closer to $i$ than $p$ is), then $\deg \low(\sigma_{ip} \sigma_{qj}) > \deg \low(\sigma_{ip} \sigma_{pj}) \ge \deg \low(\sigma_{ij}) = \ell+r$. Additionally, by \Cref{lemma:LowestDegAprMinor}, $p$ and $q$ must be contained in~$V(\ct')$, i.e. $i < p \le q < j$ in our ordering of vertices by \Cref{conv:Ordering}.
Hence, to find the lowest-order terms, we only need to consider
\begin{align}
  \label{eq:pqCond}
  \text{$p, q \in V(\ct') \setminus \{i,j\} \subseteq K$ such that $i < p \le q < j$.}
\end{align}

With these remarks, the basis of induction is easy to show. For $\ell = r = 1$ the only possible option in \eqref{eq:pqCond} is that $p = q = \text{top node of $\ct$}$. Then $\low(\sigma_{ip} \hsig_{pq} \sigma_{qj}) = \zeta \cdot 1 \cdot \zeta = \zeta^2$. With $\low(\sigma_{ij} |\Sigma_K|) = 2 \zeta^2$ we get $\low(|\Sigma_{iK,jK}|) = (-1)^3 \cdot (2\zeta^2 - \zeta^2) = -\zeta^2$, as required.

\begin{figure}
\newcommand{\scalefactor}{0.7}
\begin{subfigure}[t]{0.34\textwidth}
\centering%
\scalebox{\scalefactor}{\begin{tikzpicture}
\node [draw, circle] (1) {\strut$t$};
\node [draw, circle, below left=2 and 0 of 1] (q) {\strut$q$};
\node [draw, circle, below left=2 and 0 of q] (p) {\strut$p$};
\node [draw, circle, below left=2 and 0 of p] (i) {\strut$i$};;
\node [draw, circle, below right=2 and 0 of 1] (j) {\strut$j$};

\draw [->]  (1) -- (q);
\draw [->]  (q) -- (p);
\draw [->]  (p) -- (i);
\draw [->]  (1) -- (j);

\draw[decoration={brace, amplitude=0.8em}, decorate, line width=1pt] (q) -- (1) node [midway, above, xshift=-1.5em, yshift=-0.7em] {\strut$c$};
\draw[decoration={brace, amplitude=0.8em}, decorate, line width=1pt] (p) -- (q) node [midway, above, xshift=-1.5em, yshift=-0.7em] {\strut$b$};
\draw[decoration={brace, amplitude=0.8em}, decorate, line width=1pt] (i) -- (p) node [midway, above, xshift=-1.5em, yshift=-0.7em] {\strut$a$};
\draw[decoration={brace, amplitude=0.8em}, decorate, line width=1pt] (1) -- (j) node [midway, above, xshift=2.5em, yshift=-0.7em] {\strut$d=r$};
\end{tikzpicture}}
\caption{}
\label{fig:Adjoint:a}
\end{subfigure}%
~
\begin{subfigure}[t]{0.34\textwidth}
\centering%
\scalebox{\scalefactor}{\begin{tikzpicture}
\node [draw, circle] (1) {\strut$t$};
\node [draw, circle, below left=2 and 0 of 1] (p) {\strut$p$};
\node [draw, circle, below left=2 and 0 of p] (i) {\strut$i$};

\draw [->]  (1) -- (p);
\draw [->]  (p) -- (i);

\node [draw, circle, below right=2 and 0 of 1] (q) {\strut$q$};
\node [draw, circle, below right=2 and 0 of q] (j) {\strut$j$};

\draw [->]  (1) -- (q);
\draw [->]  (q) -- (j);

\draw[decoration={brace, amplitude=0.8em}, decorate, line width=1pt] (p) -- (1) node [midway, above, xshift=-1.5em, yshift=-0.7em] {\strut$b$};
\draw[decoration={brace, amplitude=0.8em}, decorate, line width=1pt] (i) -- (p) node [midway, above, xshift=-1.5em, yshift=-0.7em] {\strut$a$};
\draw[decoration={brace, amplitude=0.8em}, decorate, line width=1pt] (1) -- (q) node [midway, above, xshift=1.5em, yshift=-0.7em] {\strut$c$}; 
\draw[decoration={brace, amplitude=0.8em}, decorate, line width=1pt] (q) -- (j) node [midway, above, xshift=1.5em, yshift=-0.7em] {\strut$d$};
\end{tikzpicture}}
\caption{}
\label{fig:Adjoint:b}
\end{subfigure}~
\begin{subfigure}[t]{0.25\textwidth}
\centering%
\scalebox{\scalefactor}{\begin{tikzpicture}
\node [draw, ellipse] (1) {\strut$p = q$};
\node [draw, circle, below left=2 and 0 of 1] (i) {\strut$i$};

\draw [->]  (1) -- (i);

\node [draw, circle, below right=2 and 0 of 1] (j) {\strut$j$};

\draw [->]  (1) -- (j);

\draw[decoration={brace, amplitude=0.8em}, decorate, line width=1pt] (i) -- (1) node [midway, above, xshift=-1.5em, yshift=-0.7em] {\strut$\ell$};
\draw[decoration={brace, amplitude=0.8em}, decorate, line width=1pt] (1) -- (j) node [midway, above, xshift=1.5em, yshift=-0.7em] {\strut$r$};
\end{tikzpicture}}
\caption{}
\label{fig:Adjoint:c}
\end{subfigure}
\caption{The three cases in \Cref{Prop:BigTrekDependence}.}
\label{fig:Adjoint}
\end{figure}

In the general case, the reduction via \eqref{eq:pqCond} allows us to identify all relevant choices of $p,q$ with decompositions of $\ct'$ into three disjoint parts: the trek $i$ to $p$, the trek $p$ to $q$ and the trek $q$ to $j$.
There are three cases:

\begin{enumerate}[itemsep=0.6em, label=(\alph*)]
\item Suppose that $p$ and $q$ are both on the left branch of $\ct'$ and suppose that they are not both equal to the top node. Then the trek decomposes into four paths as in \Cref{fig:Adjoint:a}. We have $\low(\sigma_{ip} \sigma_{qj}) = \binom{c+r}{c} \zeta^{a+c+d}$. By \Cref{lemma:AdjointDegreeJump}, the lowest exponent of $\hat{\sigma}_{pq}$ in this case is $b$ if and only if $b = 0$ or $b = 1$. Otherwise it is larger than~$b$ and $a+b+c+d > \ell+r$ disqualifying those cases from contributing to the lowest terms of the overall expression, so assume $b = 0$ or $b = 1$. Then $\low(\hat{\sigma}_{pq}) = (-1)^{b} \zeta^b$ by \Cref{lemma:LowestDegAprMinor}. In total, this case contributes $\sum_{c = 1}^{\ell-1} \binom{c+r}{c}$ when $b=0$ and $-\sum_{c=0}^{\ell-2} \binom{c+r}{c}$ when $b=1$ to the lowest-order terms. (The different summation ranges for $c$ reflect that we must have $a \ge 1$ and $a+c = \ell$ and also that if $b = 0$, in order for $p = q$ not to be the top node, we must have $c \ge 1$ whereas if $b = 1$, we only need $c \ge 0$.) Together, this gives $\binom{\ell+r-1}{\ell-1}-1$. The case where $p$ and $q$ are both on the right branch is analogous and contributes $\binom{\ell+r-1}{r-1} = \binom{\ell+r-1}{\ell}-1$.

\item Suppose that $p$ is on the left branch and $q$ is on the right branch but neither of them is the top node. Then the trek decomposes into four paths as in \Cref{fig:Adjoint:b} and $\low(\sigma_{ip} \sigma_{qj}) = \zeta^{a+d}$. Here the induction hypothesis applies and computes $|\Sigma_{pL,qL}| = (-1)^{q-p+1} \binom{b+c-2}{b-1} \zeta^{b+c}$, where $L = K \setminus \{p,q\}$. (Note that $L$ contains all interior nodes on the shortest trek between $p$ and $q$ since $K$ had the same property for $i$ and $j$.) However, there is another sign to account for coming from the cofactor: $\hsig_{pq} = \adj(\Sigma_K)_{pq} = (-1)^{q-p} |\Sigma_{pL,qL}| = -\binom{b+c-2}{b-1} \zeta^{b+c}$. The sign only depends on $q-p$, again because $L$ contains all vertices between them in the ordering.
In total, this case contributes $-\sum_{b=1}^{\ell-1} \sum_{c=1}^{r-1} \binom{b+c-2}{b-1} = 1 - \binom{\ell+r-2}{\ell-1}$ to the lowest term.

\item There is a special case when $p$ and $q$ both coincide with the top node of the trek, as in \Cref{fig:Adjoint:c}. In this case, they are both simultaneously on the left and the right branch but this case was not counted in the previous bullet points. As in the base case of the induction, the contribution is $\low(\sigma_{ip} \hsig_{pq} \sigma_{qj}) = \zeta^{\ell+r}$.
\end{enumerate}
In total,
\begin{align*}
  &\hphantom{{}={}} (-1)^{\ell+r+1} \low(|\Sigma_{iK,jK}|) \\
  &= \left( \underbrace{\binom{\ell+r}{\ell} - \binom{\ell+r-1}{\ell-1} - \binom{\ell+r-1}{\ell}}_{= 0} + 2 
 - \left(1 - \binom{\ell+r-2}{\ell-1}\right) - 1 \right) \zeta^{\ell+r} \\
  &= \binom{\ell+r-2}{\ell-1} \zeta^{\ell+r},
\end{align*}
as claimed.
\end{proof}

The last remaining case needed to complete the proof of \cref{Prop:TrekDependence} is the case when $\ct$~is a trek such that $\ell = 0$ or $r = 0$, meaning that $\ct$ is actually a path. In this case \cref{lemma:AdjointDegreeJump} shows that the lowest degree term can have degree strictly higher than $\ell + r$. For this reason, we instead analyze the \emph{highest degree} term in $\zeta$ which appears in the expansion of~$|\Sigma_{iK, jK}|$. Let $\high(f)$ denote the highest degree non-zero term of a univariate polynomial~$f$.

\begin{lemma}
\label{lemma:PathDependence}
Let $\ct$ be a directed path from $i$ to $j$ and $K=V(\ct)\setminus \{i,j\}$ be the remaining vertices. Then there exists $\Sigma \in \cm_\ct''$ such that $i \not \indep j \mid K$. 
\end{lemma}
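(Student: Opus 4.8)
The plan is to use the explicit parameterization of $\cm_\ct''$ coming from the trek rule and to analyze the \emph{highest}-degree term of $|\Sigma_{iK,jK}|$ in $\zeta$, as anticipated in the remark preceding the lemma. I would label the nodes of the path by their position, so that $\ct$ reads $i = v_0 \to v_1 \to \cdots \to v_N = j$ with $N = |K|+1$ and $K = \{v_1,\dots,v_{N-1}\}$. For two positions $p \le q$, the treks between $v_p$ and $v_q$ are exactly those with top node $v_t$ for $0 \le t \le p$, since on a directed path the common ancestors of $v_p$ and $v_q$ are precisely the nodes weakly preceding $v_p$. The trek with top $v_0 = i$ is the longest, of type $(p,q)$, so the trek rule for $\cm_\ct''$ gives $\sigma_{pq} = \sum_{t=0}^{p}\binom{(p-t)+(q-t)}{p-t}\zeta^{(p-t)+(q-t)}$, and hence $\high(\sigma_{pq}) = \binom{p+q}{p}\zeta^{p+q}$ is a nonzero leading term of degree exactly $p+q$ (and the same formula holds on the diagonal).

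Next I would expand $|\Sigma_{iK,jK}|$ by the Leibniz formula, indexing rows by the positions $\{0,1,\dots,N-1\}$ and columns by $\{1,2,\dots,N\}$. The key observation is that for every bijection $\pi$ between these index sets, the product $\prod_p \sigma_{p,\pi(p)}$ has degree $\sum_p\bigl(p+\pi(p)\bigr) = \sum_{p=0}^{N-1}p + \sum_{q=1}^{N}q = N^2$, \emph{independently of} $\pi$. Since each entry $\sigma_{pq}$ has degree exactly $p+q$ with positive leading coefficient $\binom{p+q}{p}$, no degree inflation can occur, and the coefficient of $\zeta^{N^2}$ in $|\Sigma_{iK,jK}|$ is exactly $\det A$, where $A = \bigl(\binom{p+q}{p}\bigr)$ is the $N\times N$ Pascal-type matrix with rows $p=0,\dots,N-1$ and columns $q=1,\dots,N$. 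Thus $\high(|\Sigma_{iK,jK}|) = (\det A)\,\zeta^{N^2}$, and the whole argument reduces to showing $\det A \neq 0$.

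The main obstacle is evaluating this binomial determinant, and I expect $\det A = 1$. This can be seen by recognizing $A$ as the submatrix of the symmetric Pascal matrix $P = \bigl(\binom{a+b}{a}\bigr)_{a,b=0}^{N}$ obtained by deleting its last row and first column. Writing $P = L L^{T}$ with $L = \bigl(\binom{a}{b}\bigr)$ unit lower-triangular gives $\det P = 1$, so $P^{-1} = (L^{-1})^{T} L^{-1}$ with $L^{-1} = \bigl((-1)^{a-b}\binom{a}{b}\bigr)$; the adjugate identity $\det A = (-1)^{N}(P^{-1})_{0,N}$ together with a short computation giving $(P^{-1})_{0,N} = (-1)^N$ then yields $\det A = 1$. (Equivalently, $A$ is a consecutive minor of the totally positive unimodular symmetric Pascal matrix, and such minors all equal $1$.)

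Finally I would conclude: because $\high(|\Sigma_{iK,jK}|) = \zeta^{N^2} \not\equiv 0$, the determinant $|\Sigma_{iK,jK}|$ is a nonzero univariate polynomial in $\zeta$, hence nonvanishing for all but finitely many $\zeta$. Choosing any such $\zeta > 0$ produces a matrix $\Sigma \in \cm_\ct''$ with $|\Sigma_{iK,jK}| \neq 0$, which by \Cref{cor:GaussianCI} establishes $i \not\indep j \mid K$. The degree-counting step is exactly what makes this case cleaner than \Cref{lemma:AdjointDegreeJump}: since the degrees of all Leibniz terms coincide at $N^2$, no cascade of cancellations can push the decisive coefficient down to a lower order, so computing the single Pascal determinant suffices.
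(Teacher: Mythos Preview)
Your proof is correct and follows essentially the same approach as the paper: both reduce to the matrix of entrywise leading terms, identify it as a Pascal-type minor, and show its determinant equals~$1$. Your version is slightly more explicit in two respects---you spell out that every Leibniz term has the same total degree $N^2$ (which the paper uses implicitly when passing from $|\high(\Sigma_{iK,jK})|$ to the leading term of $|\Sigma_{iK,jK}|$), and you compute the Pascal determinant self-containedly via the adjugate of $P=LL^T$ rather than citing an LU decomposition.
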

\begin{proof}
To show that there exist $\Sigma \in \cm_\ct''$ for which $i \not\indep j \mid K$, it is enough to show that $|\Sigma_{iK,jK}|$ as a univariate polynomial in $\zeta$ has a non-zero coefficient.
To this end, we compute the determinant of $\high(\Sigma_{iK,jK})$, which is the matrix obtained by taking the highest degree term of each entry of $\Sigma_{iK,jK}$, at $\zeta=1$. If this value is non-zero, it follows that $|\high(\Sigma_{iK,jK})| \neq 0$ is the highest degree term of $|\Sigma_{iK,jK}|$ which is thus non-zero as well.

We claim that $|\high(\Sigma_{iK,jK})|=1$ at $\zeta=1$.
Recall that $\cm_\ct''$ is defined by the parametrization which maps $\sigma_{ij}$ to the sum of $\zeta^{\ell+r}\binom{\ell+r}{\ell}$ over all possible treks between $i$ and~$j$. Hence, the entries of $\high(\Sigma_{iK,jK})$ only take into account the \emph{longest treks} between nodes (with the highest value of $\ell+r$).
For simplicity, we number the vertices from $1$ to $n$ with $i=1$ and $j=n$. Now, the longest and unique trek between $1$ and $k$ for any $k\leq n$ is precisely the directed path from $1$ to $k$. However, the longest trek between $a$ and $k$ for any $1<a<k$ is one where the left and right branches are directed paths from $1$ to $a$ and $1$ to $k$, respectively. This implies that the term with highest degree for $\sigma_{1k}$ has coefficient $1$ (as $\ell=0$) and for that of $\sigma_{ak}$ has coefficient $\binom{a+k-2}{a-1}$. This gives us that the $(a,b)$ entry of $\high(\Sigma_{\{1,2,3,\ldots, n-1\},\{2,3,\ldots,n\}})$ at $\zeta=1$ is precisely $\binom{a+b-2}{a-1}$. However, using the identity in \cite[Equation~(1)]{Pascal}, we can obtain an LU decomposition of this matrix where the diagonal terms of both L and U are~$1$. This implies that $|\high(\Sigma_{iK,jK})|=1$ at $\zeta=1$, which proves our claim. 
\end{proof}

\subsection{Proof Details for \cref{lemma:PerfectCorrelationMatrix}}
\label{appendix:PerfectCorrelation}

\begin{proof}[Proof Details for \cref{lemma:PerfectCorrelationMatrix}]
    In the proof of \cref{lemma:PerfectCorrelationMatrix}, we solve the following six equations resulting from the block-matrix partition for the corresponding blocks of $\Sigma^{(m)}$ for $m \to \infty$:
    \begin{itemize}
        \item[(A)] $M^*_{U} \Sigma^{(m)}_{U} + \Sigma^{(m)}_{U} \tp[-0]{M^*_{U}} + 2\Id_{k-1} = 0$ \\[2mm]
        This equation is uniquely solved by $\Sigma^{(m)}_{U} = \Sigma^*_{U}$ due to \ref{eq:M_star:a}.
        
        \item[(B)] $M^*_{U} \Sigma^{(m)}_{U,k} + m\Sigma^{(m)}_{U,k-1} -m \Sigma^{(m)}_{U,k} = 0$ \\[2mm]
        The matrix $M^*_{U} - m\Id_{k-1}$ is invertible due to the structure of $M^*$, so \cref{lemma:PerfectCorrelationLimitB} together with (A) yields
        \[
        \lim_{m \to \infty} \Sigma^{(m)}_{U,k} = \lim_{m \to \infty} \Sigma^{(m)}_{U,k-1} = \Sigma^*_{U,k-1}.
        \]
        
        \item[(C)] $M^*_{U} \Sigma^{(m)}_{U,W} + \Sigma^{(m)}_{U,W} \tp{M^*_{\psi(W)}} + \Sigma^{(m)}_{U,k}(m^*_{k,k-1}, 0, \dots, 0) = 0$ \\[2mm]
        Taking the limit of the left-hand side for $m \to \infty$ yields
        \begin{align*}
            & M^*_{U} \Sigma^{(m)}_{U,W} + \Sigma^{(m)}_{U,W} \tp{M^*_{\psi(W)}} + \Sigma^{(m)}_{U,k} (m^*_{k,k-1}, 0, \dots,0)  \\
            \xrightarrow[m \to \infty]{(B)} 
            & \ M^*_{U} \left( \lim_{m \to \infty} \Sigma^{(m)}_{U,W}\right) + \left( \lim_{m \to \infty} \Sigma^{(m)}_{U,W} \right) \tp{M^*_{\psi(W)}} + \Sigma^*_{U,k-1} (m^*_{k,k-1}, 0, \dots,0),
        \end{align*}
        where we employed (B) in the last summand. This last summand can further be rewritten as 
        \[
            \Sigma^*_{U,k-1} (m^*_{k,k-1}, 0,\dots,0) = \Sigma^*_{U} \tp{M^*_{\psi(W),U}}
        \]
        by attaching a $(k-2) \times (k-1)$ zero matrix to the existing row vector and thereby extending it to $\tp{M^*_{\psi(W),U}}$. Setting the limit to zero yields equation \ref{eq:M_star:b} which is solved by $\Sigma^*_{U,\psi(W)}$. This gives us $\lim_{m \to \infty}\Sigma^{(m)}_{U,W} = \Sigma^*_{U,\psi(W)}$.
        
        \item[(D)] $2-2m\Sigma^{(m)}_{k,k}+m\left(\Sigma^{(m)}_{k,k-1}+\Sigma^{(m)}_{k-1,k}\right) = 0$ \\[2mm]
        Solving for $\Sigma^{(m)}_{k,k}$, letting $m$ go to infinity, and using the result from (B) yields 
        \[
            \lim_{m \to \infty} \Sigma^{(m)}_{k,k} 
            = \lim_{m \to \infty} \left(\frac1m + \frac{\Sigma^{(m)}_{k,k-1}+\Sigma^{(m)}_{k-1,k}}{2} \right) 
            = \frac{\Sigma^*_{k-1,k-1}+\Sigma^*_{k-1,k-1}}{2} 
            = \Sigma^*_{k-1,k-1}.
        \]
        
        \item[(E)] $m \Sigma^{(m)}_{k-1,W} - m \Sigma^{(m)}_{k,W}+ \Sigma^{(m)}_{k,k} (m^*_{k,k-1}, 0,\dots,0) + \Sigma^{(m)}_{k,W} \tp{M^*_{\psi(W)}} = 0$ \\[2mm]
        The matrix $\tp{M^*_{\psi(W)}} - m\Id_{\nrnodes-k}$ is invertible due to the structure of $M^*$, so the transposed version of \cref{lemma:PerfectCorrelationLimitB} together with (C) and (D) yields
        \[
            \lim_{m \to \infty} \Sigma^{(m)}_{k,W} = \lim_{m \to \infty} \Sigma^{(m)}_{k-1,W} = \Sigma^*_{k-1,\psi(W)}.
        \]
        
        \item[(F)] $M^*_{\psi(W)} \Sigma^{(m)}_{W} + \Sigma^{(m)}_{W} \tp{M^*_{\psi(W)}} + 2\Id_{\nrnodes-k} \\
        + (m^*_{k,k-1}, 0,\dots,0)^T \Sigma^{(m)}_{k,W} + \Sigma^{(m)}_{W,k}(m^*_{k,k-1},0,\dots,0) = 0$ \\[2mm]
        We use that $\Sigma^{(m)}_{k,W} \to \Sigma^*_{k-1,\psi(W)}$ as well as $\Sigma^{(m)}_{W,k} \to \Sigma^*_{\psi(W),k-1}$ for $m \to \infty$ due to (E) and take the limit on both sides. This yields for the left-hand side
        \begin{align*}
            & M^*_{\psi(W)} \Sigma^{(m)}_{W} + \Sigma^{(m)}_{W} \tp{M^*_{\psi(W)}} + 2\Id_{\nrnodes-k}  \\
            & \qquad + (m^*_{k,k-1}, 0,\dots,0)^T \Sigma^{(m)}_{k,W} + \Sigma^{(m)}_{W,k}(m^*_{k,k-1},0,\dots,0) \\
            \xrightarrow[m \to \infty]{(E)} 
            & \ M^*_{\psi(W)} \left(\lim_{m \to \infty}\Sigma^{(m)}_{W}\right) + \left(\lim_{m \to \infty}\Sigma^{(m)}_{W}\right) \tp{M^*_{\psi(W)}} + 2\Id_{\nrnodes-k} \\
            & \qquad + (m^*_{k,k-1}, 0,\dots,0)^T \Sigma^*_{k-1,\psi(W)} + \Sigma^*_{\psi(W),k-1}(m^*_{k,k-1},0,\dots,0) \\
            = & \ \ M^*_{\psi(W)} \left(\lim_{m \to \infty}\Sigma^{(m)}_{W}\right) + \left(\lim_{m \to \infty}\Sigma^{(m)}_{W}\right) \tp{M^*_{\psi(W)}} + 2\Id_{\nrnodes-k} \\
            & \qquad + M^*_{\psi(W),U} \Sigma^*_{U,\psi(W)} + \Sigma^*_{\psi(W),U}\tp{M^*_{\psi(W),U}}, 
        \end{align*}
        where we employed the same technique of rewriting the last two summands as for (C). Setting the limit to zero yields equation \ref{eq:M_star:c} that is solved by $\Sigma^*_{\psi(W)}$. We conclude $\lim_{m \to \infty} \Sigma^{(m)}_{W} = \Sigma^*_{\psi(W)}$.
    \end{itemize} 
\end{proof}

\begin{lemma} \label{lemma:PerfectCorrelationLimitB}
    Let $A^{(m)}$ and $C^{(m)}$ be convergent sequences of $\nrnodes \times l$ matrices, and let $D,E,F \in \rr^{\nrnodes \times \nrnodes}$ be constant matrices such that $E - m \Id_\nrnodes$ is invertible for all $m \in \nn_{> 0}$.
    If a further matrix sequence $B^{(m)}$ satisfies that 
    \[
        m \cdot A^{(m)} - m \cdot B^{(m)} + D \cdot A^{(m)} + E \cdot B^{(m)} + F \cdot C^{(m)} =0
    \]
    for all $m \in \nn_{> 0}$, then
    \[
        \lim_{m\to \infty} B^{(m)} = \lim_{m\to\infty} A^{(m)}.
    \]
\end{lemma}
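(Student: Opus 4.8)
The plan is to solve the given matrix identity directly for $B^{(m)}$ and then control the difference $B^{(m)} - A^{(m)}$. First I would collect the two terms involving $B^{(m)}$ and rewrite the hypothesis as
\[
(E - m\Id_\nrnodes)\, B^{(m)} = -(m\Id_\nrnodes + D)\, A^{(m)} - F C^{(m)}.
\]
Since $E - m\Id_\nrnodes$ is invertible for every $m \in \nn_{>0}$ by assumption, this determines $B^{(m)}$ uniquely. The key idea is to analyze $B^{(m)} - A^{(m)}$ rather than $B^{(m)}$ itself, because the terms scaled by the growing factor $m$ then cancel.

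Concretely, I would subtract $(E - m\Id_\nrnodes)A^{(m)}$ from both sides and simplify:
\[
(E - m\Id_\nrnodes)\bigl(B^{(m)} - A^{(m)}\bigr) = -(m\Id_\nrnodes + D)A^{(m)} - F C^{(m)} - (E - m\Id_\nrnodes)A^{(m)} = -(D + E)A^{(m)} - F C^{(m)}.
\]
The crucial point is that the two occurrences of $m A^{(m)}$ cancel, so the right-hand side no longer contains the factor $m$. Hence
\[
B^{(m)} - A^{(m)} = (E - m\Id_\nrnodes)^{-1}\bigl(-(D + E)A^{(m)} - F C^{(m)}\bigr).
\]
Because $A^{(m)}$ and $C^{(m)}$ converge, the matrix in parentheses converges to $-(D+E)\lim_m A^{(m)} - F \lim_m C^{(m)}$ and is, in particular, a bounded sequence in any submultiplicative matrix norm.

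It then remains to show that $(E - m\Id_\nrnodes)^{-1} \to 0$ in operator norm. Writing $E - m\Id_\nrnodes = -m(\Id_\nrnodes - E/m)$, I obtain $(E - m\Id_\nrnodes)^{-1} = -\tfrac{1}{m}(\Id_\nrnodes - E/m)^{-1}$ for $m$ large enough that $\Id_\nrnodes - E/m$ is invertible, and $(\Id_\nrnodes - E/m)^{-1} \to \Id_\nrnodes$ as $m \to \infty$. Thus $\|(E - m\Id_\nrnodes)^{-1}\|$ decays like $1/m$ and tends to $0$. Multiplying a sequence tending to $0$ in norm by a bounded sequence yields a sequence tending to $0$, so $B^{(m)} - A^{(m)} \to 0$; since $A^{(m)}$ converges, it follows that $\lim_m B^{(m)} = \lim_m A^{(m)}$.

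I expect no serious obstacle in this argument. The only real insight is to pass to the difference $B^{(m)} - A^{(m)}$ so as to cancel the $m$-scaled terms, rather than attempting to estimate $B^{(m)}$ directly; after that cancellation the conclusion reduces to the elementary fact that $\|(E - m\Id_\nrnodes)^{-1}\| = O(1/m)$.
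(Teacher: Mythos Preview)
Your proof is correct and follows essentially the same route as the paper: solve for $B^{(m)}$ using the invertibility of $E - m\Id_\nrnodes$, rescale by $1/m$, and invoke continuity of matrix inversion at the identity. The paper divides the whole equation by $m$ and takes the limit of $B^{(m)} = -(\tfrac{1}{m}E - \Id_\nrnodes)^{-1}\bigl((\tfrac{1}{m}D + \Id_\nrnodes)A^{(m)} + \tfrac{1}{m}FC^{(m)}\bigr)$ directly, whereas you isolate $B^{(m)} - A^{(m)}$ first; since your factorization $(E - m\Id_\nrnodes)^{-1} = -\tfrac{1}{m}(\Id_\nrnodes - E/m)^{-1}$ is exactly the paper's $(\tfrac{1}{m}E - \Id_\nrnodes)^{-1}$ up to the factor $m$, the two computations are the same.
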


\begin{proof}
    By collecting the terms and dividing by $m$, we have 
    \[
       \left(\frac1m D + \Id_\nrnodes \right) A^{(m)} + \left(\frac1m E - \Id_\nrnodes\right) B^{(m)} +  \frac1m F \cdot C^{(m)} = 0.
    \]
    Since $\left(\frac1m E - \Id_\nrnodes\right)$ is invertible, we can write
    \[
        B^{(m)} = - \left(\frac1m E - \Id_\nrnodes\right)^{-1} \left(\left(\frac1m D + \Id_\nrnodes\right) A^{(m)} - \frac1m F \cdot C^{(m)}\right).
    \]
    As matrix inversion is continuous and $A^{(m)}$ and $C^{(m)}$ are convergent, letting $m$ go to infinity on both sides yields the result.
\end{proof}

\end{document}